\documentclass[a4paper,oneside,onecolumn]{article}
\usepackage{cancel}
\usepackage{stmaryrd} 

\usepackage{amssymb} 
\usepackage{soul}
\usepackage[T1]{fontenc} 
\usepackage{textcomp} 
\usepackage{mathtools} 

\usepackage[left=2.0cm,top=2.5cm,right=2.0cm,bottom=2.5cm,bindingoffset=0.0cm]{geometry}

\usepackage{float} 
\usepackage{multirow}

\usepackage[utf8]{inputenc} 

\usepackage{dsfont}
\usepackage{graphics,graphicx,epstopdf} 

\usepackage{enumitem} 
\usepackage{mathrsfs}

\usepackage{subfig} 

\usepackage{setspace} 
\usepackage{amsmath,amssymb,amsthm,amsfonts} 
\usepackage{stmaryrd} 
\allowdisplaybreaks

\usepackage{lineno}                     
\usepackage{hyperref}                   

\usepackage{color, xcolor}

\usepackage{authblk} 

\usepackage{amsmath}              
\usepackage{amsfonts}             
\usepackage{amssymb}              
\usepackage{booktabs}             
\usepackage{caption}              

\theoremstyle{plain} 
\newtheorem{theorem}{Theorem}[section]
\newtheorem{lemma}{Lemma}[section]

\newtheorem{assumption}{Assumption}[section]
\newtheorem{definition}{Definition}[section]

\theoremstyle{remark} 
\newtheorem{remark}{Remark}[section]

\definecolor{c1}{rgb}{0,0,1} 
\definecolor{c2}{rgb}{0,0.3,0.9} 
\definecolor{c3}{rgb}{0.3,0,0.9} 

\hypersetup{
	pdfauthor={Lok Pati},
	pdfsubject={},
	pdftitle={},
	pdfkeywords={},
	breaklinks = true, 
	linktocpage=true, 
	colorlinks=true,       
	linkcolor={c1},          
	citecolor={c2},        
	filecolor=black,      
	urlcolor={c3},       
}
\numberwithin{equation}{section}

\allowdisplaybreaks
\makeatletter
\def\namedlabel#1#2{\begingroup
    #2%
    \def\@currentlabel{#2}%
    \phantomsection\label{#1}\endgroup
}
\makeatother

\begin{document}

\title{\textbf{Existence, Uniqueness, and Pathwise Regularity for Multidimensional Semilinear SPDEs with Locally Lipschitz Coefficients and Rough Initial Data}}


\author[1]{\textsc{Jitendra Nath Naik}\thanks{\href{mailto:jitendra20232201@iitgoa.ac.in}{jitendra20232201@iitgoa.ac.in}}}
\author[1]{\textsc{Lok Pati Tripathi}\thanks{\href{mailto:lokpati@iitgoa.ac.in}{lokpati@iitgoa.ac.in}}}
\affil[1]{School of Mathematics and Computer Science, Indian Institute of Technology Goa, Goa 403401, India.}

\date{} 

\maketitle 


\begin{abstract}
We study multidimensional semilinear stochastic evolution equations driven by multiplicative noise and subject to rough initial data. The drift and diffusion coefficients are assumed to take values in negative fractional order spaces and may exhibit temporal singularities at the initial time. Our main results establish well-posedness and pathwise spatio-temporal regularity of the solutions when these coefficients are globally Lipschitz, and we prove the existence, uniqueness, and pathwise regularity of maximal local solutions when the coefficients are locally Lipschitz. The local Lipschitz condition is formulated with respect to a specific time-weighted norm. This enables us to apply our theoretical results to linear stochastic partial differential equations, as well as to models with non-globally Lipschitz nonlinearities such as the stochastic Burgers, Allen--Cahn, Fisher--KPP, Burgers--Fisher equations, and the Ginzburg--Landau system. Furthermore, our framework accommodates singular initial data, such as the Dirac measure, in dimension $d=1$, and nonsmooth initial data in dimensions $d \in \{2,3\}$.
\end{abstract}

\vspace{1.5em}

\noindent
\textbf{Keywords:} Stochastic evolution equations; multiplicative noise; well-posedness; maximal local mild solution; locally Lipschitz nonlinearity; rough initial data; pathwise regularity; stochastic Burgers equation; stochastic Allen--Cahn equation

\vspace{0.8em}

\noindent
\textbf{AMS Subject Classification (2020):} 60H15, 35B65, 35R60, 35K58. 

\vspace{2em} 
\tableofcontents 
\vspace{2em} 

\section{Introduction}\label{sec:introduction}
Let $(\Omega, \mathcal{F}, \mathbb{P})$ be a probability space equipped with a normal filtration $\{\mathcal{F}_t\}_{t \in [0,T]}$, and let $\{W(t)\}_{t \in [0,T]}$ be a $Q$-Wiener process (possibly cylindrical) on a separable Hilbert space $(U, (\cdot, \cdot)_U)$ with respect to $\{\mathcal{F}_t\}_{t \in [0,T]}$, where the covariance operator $Q \colon U \to U$ is linear, bounded, self-adjoint, and positive semidefinite. We investigate the following semilinear stochastic evolution equation (SEE):
\begin{equation}  
\label{eq:SEE}
\begin{aligned}
    dX(t) + A X(t) \, dt &= F(t, X(t)) \, dt + G(t, X(t)) \, dW(t), \quad t \in (0, T], \\
    X(0) &= X_0,
\end{aligned}
\end{equation}
where $A \colon \mathcal{D}(A) \subset \mathbb{H} \to \mathbb{H}$ is a densely defined, self-adjoint, and positive definite linear operator with a compact inverse. Our framework allows for singular initial data $X_0$, i.e., $X_0\in L^0(\Omega, \mathcal{F}_0; \dot{\mathbb{H}}^{\mu})$ possibly with $\mu<0$, as well as nonlinear drift and diffusion coefficients $F$ and $G$ that are locally Lipschitz and may possess singularities at the initial time. The precise hypotheses are specified in Section~\ref{sec:hypothesis_and_main_results}.

The existence and uniqueness of mild solutions to SEEs with coefficients satisfying global Lipschitz and linear growth conditions are well understood, as detailed in standard monographs (e.g., \cite{MR1840644, MR2329435, MR2856611, MR3154916, MR3236753, MR3308418, MR3410409}). In the Banach space setting, the theory has been extensively developed using the factorization method \cite{MR920798, MR1154532, MR1488138} and stochastic integration in UMD spaces via $\gamma$-radonifying operators \cite{MR2051026, MR2330977, MR2433958, MR2952092, MR2982717, MR4952170}. In the Hilbert space setting, Jentzen and R\"{o}ckner~\cite{MR2852200}, Kruse and Larsson~\cite{MR2968672}, and Hong and Liu~\cite{MR3912731} established higher spatial and temporal regularity by exploiting the smoothing properties of the analytic $\mathcal{C}_0$-semigroup. To handle rough initial states, such as signed Borel measures, Chen and Dalang~\cite{MR3255231, MR3433576} established existence, uniqueness, and H\"{o}lder regularity for specific one-dimensional models. Recently, Andersson et al.~\cite{MR4172830} introduced an abstract framework with global Lipschitz coefficients taking values in negative fractional order spaces, which is capable of treating rough initial data by allowing for temporal singularities. However, the critical drawback of these frameworks with globally Lipschitz coefficients is their inability to accommodate fundamental physical models exhibiting polynomial or convective nonlinearities.

To address these models with non-globally Lipschitz nonlinearities, the existence and uniqueness of solutions for SEEs are traditionally established using the variational approach based on local monotonicity and generalized coercivity (e.g., \cite{MR2719279, MR2990049, MR3053478, MR3158475, MR3410409}), a framework recently extended to critical variational settings by Agresti and Veraar~\cite{MR4716343}. Alternatively, the semigroup approach yields maximal local solutions and sharp temporal H\"{o}lder continuity in fractional order spaces. Within the semigroup framework, the existence and uniqueness of maximal local solutions for locally Lipschitz coefficients were established by Seidler~\cite{MR1213834} in Hilbert spaces and by Brze\'{z}niak~\cite{MR1488138} in M-type 2 Banach spaces. This theory was subsequently extended to UMD Banach spaces by van Neerven et al.~\cite{MR2433958} and further advanced via stochastic maximal $L^p$-regularity by van Neerven et al.~\cite{MR2982717} and Agresti and Veraar~\cite{MR4459102} (see also the comprehensive survey~\cite{MR4952170}). Furthermore, for SEEs driven by time-dependent linear operators, Veraar~\cite{MR2602928} established the existence of local mild
solutions in UMD Banach spaces, complementing the earlier foundational Hilbert space results in \cite{MR1213834}. However, a major limitation of these frameworks with locally Lipschitz coefficients is their inability to handle rough initial states, such as the Dirac measure or rougher distributions like its derivative. 

In this article, we address the limitations of the existing literature concerning both globally and locally Lipschitz coefficients by developing a framework that simultaneously treats locally Lipschitz coefficients and singular initial data. We decouple the initial data regularity parameter $\mu$ from the fractional space index $\nu$ characterizing the domains of the operators $F$ and $G$. This gives us the flexibility to formulate our local Lipschitz conditions with respect to a specific time-weighted norm $\|\cdot\|_{\lambda,t}$. Crucially, this approach provides the analytical tools required not only to incorporate rough initial data but also to apply our theoretical results to standard examples, such as the stochastic convection-diffusion equation and the parabolic Anderson model, and equations with polynomial or convective terms, such as the stochastic Burgers, Allen--Cahn, Fisher--KPP, Burgers--Fisher equations, and Ginzburg--Landau system. Consequently, we are able to treat these physical models in dimension $d=1$ driven by space-time white noise with rough initial data (e.g., Dirac distribution), and in higher dimensions $d \in \{2,3\}$ driven by trace-class noise with nonsmooth initial data, thereby complementing foundational 1D results such as those by Gy\"{o}ngy~\cite{MR1608641}. Furthermore, by establishing these theoretical properties, we provide the mathematical foundation necessary to support future numerical approximations for this broader class of semilinear SEEs (cf. Wang and Wang~\cite{MR4858504}). The primary contributions of this article are organized as follows:
\begin{itemize}
    \setlength{\itemsep}{0.5em}
    \item We generalize the framework of Andersson et al.~\cite{MR4172830} by establishing existence, uniqueness, and pathwise spatio-temporal regularity for \eqref{eq:SEE} with globally Lipschitz coefficients and singular initial data $X_0 \in L^0(\Omega, \mathcal{F}_0; \dot{\mathbb{H}}^{\mu})$ for $\mu < 0$. We recover their main result (Theorem~2.9) as a special case (see Remark~\ref{rem:global_case_comparison}(i)) while extending the theory to a broader class of initial conditions and pathwise regularity properties (see Theorem~\ref{thm:main_global_existence} and Theorems~\ref{thm:pathwise_regularity}--\ref{thm:general_initial_value}).
    
    \item By decoupling the initial data regularity parameter $\mu$ from the fractional space index $\nu$, we formulate local Lipschitz conditions and stopping times with respect to the time-weighted norm $\|\cdot\|_{\lambda,t}$. This allows us to accommodate rough initial data ($\mu < 0$), a regime inaccessible to Seidler’s framework. Consequently, we recover Seidler's global existence result~\cite[Theorem~1.3]{MR1213834} as a special case and extend his maximal local existence result~\cite[Theorem~1.5]{MR1213834} to accommodate rough initial data (see Remarks~\ref{rem:global_case_comparison}(iii) and~\ref{rem:seidler_local}).
    
    \item We provide fractional convolution estimates (see Lemma~\ref{lem:det_smoothing}) that eliminate the loss of H\"{o}lder regularity at the boundary and extend to the full real line ($\eta, \rho \in \mathbb{R}$). These estimates improve upon previous literature (e.g., Hong and Liu~\cite[Proposition 4.1]{MR3912731}) and are essential for justifying the negative fractional order space mappings used in our framework.

    \item We apply our abstract theoretical framework to several standard examples, including the stochastic convection-diffusion equation and the parabolic Anderson model. Furthermore, we treat models with polynomial and convective nonlinearities, such as the stochastic Burgers, Allen--Cahn, Fisher--KPP, Burgers--Fisher equations, and Ginzburg--Landau system in spatial dimensions $d \in \{1, 2, 3\}$ with rough initial data. Specifically, for the stochastic convection-diffusion equation with additive noise, our framework accommodates highly singular initial data (e.g., the Dirac distribution or its derivatives) in dimension $d=1$, as well as nonsmooth initial data in dimensions $d \in \{2, 3\}$.
\end{itemize}

For simplicity of exposition, we consider a time-independent operator $A$. However, all results in this article can be extended to a time-dependent operator $A(t)$ by employing an evolution system under hypotheses \((E)\) and \((P)\), as stated in \cite{MR1213834}. The remainder of this article is organized as follows. Section~\ref{sec:hypothesis_and_main_results} introduces the preliminaries, states the core assumptions, and presents the main results. Section~\ref{sec:proof_of_main_results} contains the proofs of these theorems. Finally, Section~\ref{sec:examples} applies our theoretical results to several standard physical models.

\section{Mathematical framework and main results}\label{sec:hypothesis_and_main_results}
This section is devoted to the rigorous setup of our framework and the statement of our main results. We outline the required preliminaries in Subsection~\ref{subsec:Preliminaries}, establish the global well-posedness and pathwise regularity for globally Lipschitz coefficients in Subsection~\ref{subsec:Globally Lipschitz nonlinearities}, and present the existence of a unique maximal local solution together with a blow-up alternative result for locally Lipschitz nonlinearities in Subsection~\ref{subsec:Locally Lipschitz nonlinearities}.
\subsection{Preliminaries}\label{subsec:Preliminaries}

\noindent\textbf{Notations:} Throughout this article, the letter $C$ represents a generic positive constant whose exact value may change from line to line. When essential, its dependence on specific parameters will be explicitly indicated. For $x \in \mathbb{R}$, we define the positive part as $x^+ \coloneqq \max\{0, x\}$. We denote by $\mathcal{P}_T$ the predictable $\sigma$-algebra on $\Omega_T \coloneqq [0,T] \times \Omega$. The symbol $\otimes$ is used in two standard contexts: it denotes the product $\sigma$-algebra when applied to collections of sets (e.g., $\mathcal{P}_T \otimes \mathcal{B}(\dot{\mathbb{H}}^{\mu})$), and it denotes the tensor product of vectors (e.g., $u \otimes v$) in the context of convective nonlinearities.

Let $(H, (\cdot, \cdot)_H)$ be a separable Hilbert space. To model the underlying multi-dimensional systems, we introduce the product space $\mathbb{H} \coloneqq H^N$ for $N \ge 1$. The space $\mathbb{H}$ inherits a separable Hilbert space structure when equipped with the natural inner product $(u, v)_{\mathbb{H}} \coloneqq \sum_{i=1}^N (u_i, v_i)_H$, with the induced norm denoted by $\|\cdot\|_{\mathbb{H}}$. The space of bounded linear operators on $\mathbb{H}$ is denoted by $\mathcal{L}(\mathbb{H})$.

\begin{assumption}\label{ass:operator_A}
    As introduced in Section~\ref{sec:introduction}, we consider a densely defined, self-adjoint, and positive definite linear operator $A \colon \mathcal{D}(A) \subset \mathbb{H} \to \mathbb{H}$ with a compact inverse. 
\end{assumption}
 Under this assumption, $-A$ generates an analytic $\mathcal{C}_0$-semigroup of contractions $\{S(t)\}_{t\ge 0}$ on $\mathbb{H}$ given by $S(t) = e^{-tA}$. Applying the spectral theorem to $A^{-1}$ yields an orthonormal basis of eigenvectors for $A$, which allows us to define the fractional powers $A^\gamma$ for $\gamma \in \mathbb{R}$ (see, e.g., Kruse~\cite[Appendix~B.2]{MR3154916}). The associated fractional spaces are defined as $\dot{\mathbb{H}}^\gamma \coloneqq \mathcal{D}(A^{\gamma/2})$, equipped with the norm $\|u\|_{\dot{\mathbb{H}}^\gamma} \coloneqq \|A^{\gamma/2}u\|_{\mathbb{H}}$.

The following lemma collects several smoothing properties of the analytic $\mathcal{C}_0$-semigroup that will be essential throughout the article.

\begin{lemma}[Semigroup estimates]\label{lem:semigroup_estimates}
Let the operator $A$ and the corresponding analytic $\mathcal{C}_0$-semigroup $\{S(t)\}_{t\ge 0}$ be as defined above. Then the following estimates hold:
\begin{enumerate}[label={\upshape(\roman*)}]
    \item \label{est:semigroup_smoothing} For any $\gamma \in \mathbb{R}$, there exists a constant $C > 0$, depending on $\gamma$, such that
    \[
    \|A^{\gamma}S(t)\|_{\mathcal{L}(\mathbb{H})} \le C\, t^{-\gamma^+}, \quad t > 0.
    \]
    
    \item \label{est:semigroup_continuity_zero} For any $\rho \in [0,1]$, there exists a constant $C > 0$, depending on $\rho$, such that
    \[
    \|A^{-\rho}(I-S(t))\|_{\mathcal{L}(\mathbb{H})} \le C\, t^{\rho}, \quad t > 0.
    \]
    
    \item \label{est:semigroup_continuity_t} For any $\gamma \in \mathbb{R}$ and $\rho \in [0,1]$, there exists a constant $C > 0$, depending on $\gamma$ and $\rho$, such that
    \[
    \|A^{\gamma}(S(t)-S(s))\|_{\mathcal{L}(\mathbb{H})} \le C\, (t-s)^{\rho}\, s^{-(\gamma+\rho)^+}, \quad 0 < s < t.
    \]
\end{enumerate}
\end{lemma}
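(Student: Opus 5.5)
We argue entirely through the spectral decomposition supplied by Assumption~\ref{ass:operator_A}. Let $\{e_k\}_{k\ge1}$ be an orthonormal basis of $\mathbb{H}$ consisting of eigenvectors of $A$, with eigenvalues $0<\lambda_1\le\lambda_2\le\cdots$. Then $A^\gamma S(t)e_k=\lambda_k^\gamma e^{-\lambda_k t}e_k$. Every operator appearing in (i)--(iii) is therefore a Fourier multiplier, and its $\mathcal{L}(\mathbb{H})$-norm equals the supremum of the modulus of its symbol over the spectrum $\{\lambda_k\}\subset[\lambda_1,\infty)$. So each estimate reduces to an elementary scalar inequality for functions of $\lambda\ge\lambda_1$ and $t>0$. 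We use the positivity of $\lambda_1$ whenever the exponents are negative.

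For (i), we bound $\sup_{\lambda\ge\lambda_1}\lambda^\gamma e^{-\lambda t}$.
\begin{itemize}
\item If $\gamma>0$, then $\lambda^\gamma e^{-\lambda t}=t^{-\gamma}(\lambda t)^\gamma e^{-\lambda t}\le t^{-\gamma}\sup_{x>0}x^\gamma e^{-x}=(\gamma/e)^\gamma t^{-\gamma}$.
\item If $\gamma\le0$, then $\lambda^\gamma e^{-\lambda t}\le\lambda_1^{\gamma}$, which is $C t^{0}$ because $t^{-\gamma^+}=1$.
\end{itemize}

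For (ii), we bound the symbol $\lambda^{-\rho}(1-e^{-\lambda t})$ for $\rho\in[0,1]$. We use the scalar inequality $1-e^{-x}\le\min\{1,x\}\le x^\rho$ for $x>0$. Setting $x=\lambda t$ gives $\lambda^{-\rho}(1-e^{-\lambda t})\le\lambda^{-\rho}(\lambda t)^\rho=t^\rho$, so the constant can be taken as $C=1$.

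For (iii), we factor $S(t)-S(s)=-(I-S(t-s))S(s)$. Since all these operators commute,
\[
A^\gamma(S(t)-S(s))=-A^{-\rho}(I-S(t-s))\,A^{\gamma+\rho}S(s).
\]
Applying (ii) with time $t-s$ and (i) with exponent $\gamma+\rho$ gives the bound $C(t-s)^\rho s^{-(\gamma+\rho)^+}$.

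There is no real obstacle in this argument. The only delicate point is keeping the positive-part exponents correct when $\gamma$ or $\gamma+\rho$ is negative. In that case the bound in (i) must not degenerate as $t\to\infty$, and this is exactly where the assumption $\lambda_1>0$ (compact inverse together with positive definiteness of $A$) is essential. For negative $\gamma$ the constant then depends on $\lambda_1$ as well as on $\gamma$.
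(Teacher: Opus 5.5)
Your proof is correct and follows the paper's approach. Part (iii) uses the same factorization $A^\gamma(S(t)-S(s)) = A^{\gamma+\rho}S(s)\,A^{-\rho}(S(t-s)-I)$ combined with (i) and (ii). The negative-exponent case of (i) reduces, as in the paper, to the boundedness of $A^{\gamma}$ (your constant $\lambda_1^{\gamma}$). The only difference is that you prove the case $\gamma>0$ of (i) and part (ii) directly by spectral calculus, with explicit constants $(\gamma/e)^\gamma$ and $1$, whereas the paper cites Kruse's Lemma~B.9 for these standard estimates.
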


\begin{proof}
(i) Let $\gamma \ge 0$. The estimate $\|A^\gamma S(t)\|_{\mathcal{L}(\mathbb{H})} \le C t^{-\gamma}$ is a standard smoothing property of analytic semigroups (see, for instance, Kruse~\cite[Lemma~B.9]{MR3154916}). For $\gamma < 0$, the fractional power $A^\gamma$ is a bounded operator on $\mathbb{H}$ because $A$ is positive definite and possesses a bounded inverse. Utilizing the uniform boundedness of the semigroup $S(t)$, we obtain
\[
    \|A^\gamma S(t)\|_{\mathcal{L}(\mathbb{H})} \le \|A^\gamma\|_{\mathcal{L}(\mathbb{H})}\, \|S(t)\|_{\mathcal{L}(\mathbb{H})} \le C.
\]
Combining these two cases yields the bound $\|A^\gamma S(t)\|_{\mathcal{L}(\mathbb{H})} \le C\, t^{-\gamma^+}$.

\medskip
\noindent (ii) For the proof of this estimate, we refer the reader to Kruse~\cite[Lemma~B.9]{MR3154916}.

\medskip
\noindent (iii) By the semigroup property, we have $S(t)-S(s) = S(s)\big(S(t-s)-I\big)$. Applying  $A^\gamma$ and exploiting the fact that $A$ and its fractional powers commute with the semigroup, we deduce
\[
    A^\gamma (S(t)-S(s)) = A^{\gamma+\rho} S(s) A^{-\rho}(S(t-s)-I).
\]
Taking the operator norm in $\mathcal{L}(\mathbb{H})$ and applying the bounds established in (i) and (ii), we arrive at
\begin{align*}
    \|A^\gamma (S(t)-S(s))\|_{\mathcal{L}(\mathbb{H})} 
    &\le \|A^{\gamma+\rho} S(s)\|_{\mathcal{L}(\mathbb{H})}\, \|A^{-\rho}(S(t-s)-I)\|_{\mathcal{L}(\mathbb{H})} \\
    &\le C\, s^{-(\gamma+\rho)^+}\, (t-s)^\rho.
\end{align*}
This concludes the proof.
\end{proof}

For a given Banach space $(\mathcal{X}, \|\cdot\|_{\mathcal{X}})$, let $\mathcal{C}_b((a,b]; \mathcal{X})$ denote the Banach space of bounded continuous functions from $(a,b]$ to $\mathcal{X}$, endowed with the norm $\|\phi\|_{\mathcal{C}_b((a,b]; \mathcal{X})} \coloneqq \sup_{t\in(a,b]}\|\phi(t)\|_{\mathcal{X}}$. Similarly, $\mathcal{C}([a,b]; \mathcal{X})$ and $\mathcal{C}^{0,\delta}([a,b]; \mathcal{X})$ denote the Banach spaces of continuous and $\delta$-H\"{o}lder continuous functions from $[a,b]$ to $\mathcal{X}$, endowed with the standard norms $\|\phi\|_{\mathcal{C}([a,b]; \mathcal{X})} \coloneqq \sup_{t\in[a,b]}\|\phi(t)\|_{\mathcal{X}}$ and $\|\phi\|_{\mathcal{C}^{0,\delta}([a,b]; \mathcal{X})} \coloneqq\sup_{t\in[a,b]}\|\phi(t)\|_{\mathcal{X}} + \sup_{s\neq t} \frac{\|\phi(t) - \phi(s)\|_{\mathcal{X}}}{|t-s|^\delta}$, respectively. We also use the little H\"{o}lder space
$$
\mathring{\mathcal C}^{0,\delta}([a,b];\mathcal X) 
:= 
\left\{ 
\phi\in\mathcal C^{0,\delta}([a,b];\mathcal X): \lim_{h\searrow 0} \sup_{0<|t-s|<h} \frac{\|\phi(t)-\phi(s)\|_{\mathcal X}}{|t-s|^\delta} =0 
\right\},
$$
which is a closed subspace of $\mathcal C^{0,\delta}([a,b];\mathcal X)$. Crucially, $\mathring{\mathcal{C}}^{0, \delta}([a, b]; \mathcal{X})$ inherits separability whenever the underlying space $\mathcal{X}$ is separable.

Let $(\mathcal{S}, \mathcal{A}, \mathfrak{m})$ be a finite measure space. For $1\leq p<\infty$, we denote by $L^p(\mathcal{S};\mathcal{X})$ the Banach space of strongly measurable functions $f:\mathcal{S}\to\mathcal{X}$ such that $\|f\|_{L^p(\mathcal{S};\mathcal{X})}\coloneqq \big(\int_{\mathcal{S}}\|f\|_{\mathcal{X}}^p\,d\mathfrak{m}\big)^{1/p}<\infty$. Similarly, $L^0(\mathcal{S}; \mathcal{X})$ denotes the linear space of strongly measurable functions $f:\mathcal{S}\to\mathcal{X}$, where two functions are identified whenever they are equal $m$-almost everywhere. Whenever it is necessary to explicitly highlight the underlying $\sigma$-algebra $\mathcal{A}$, these spaces are written as $L^p(\mathcal{S},\mathcal{A}; \mathcal{X})$ and $L^0(\mathcal{S},\mathcal{A}; \mathcal{X})$.

 For a well-defined theory of stochastic integration in this infinite-dimensional setting, we follow the framework of Pr\'{e}v\^{o}t and R\"{o}ckner~\cite{MR2329435} and introduce the separable Hilbert space $U_0 \coloneqq Q^{1/2}(U)$, endowed with the inner product
$$
(u_0, v_0)_{U_0} \coloneqq (Q^{-1/2}u_0, Q^{-1/2}v_0)_U, \quad u_0, v_0 \in U_0,
$$
where $Q^{-1/2}$ denotes the pseudoinverse of $Q^{1/2}$ if $Q$ is not injective. Furthermore, we denote by $\mathrm{HS}(U_0, \mathbb{H})$ the space of Hilbert--Schmidt operators $\Phi: U_0 \to \mathbb{H}$, which forms a separable Hilbert space under the norm
$$
\|\Phi\|_{\mathrm{HS}(U_0, \mathbb{H})} \coloneqq \left( \sum_{m=1}^\infty \|\Phi \psi_m\|_{\mathbb{H}}^2 \right)^{1/2},
$$
where $\{\psi_m\}_{m \ge 1}$ is an arbitrary orthonormal basis of $U_0$.

The following lemma provides a Burkholder--Davis--Gundy type inequality for $\mathbb{H}$-valued stochastic integrals, which will be used repeatedly to estimate the moments of stochastic convolutions.

\begin{lemma}[{\cite[Proposition~2.12]{MR3154916}}]
\label{lem:BDG}
Let $p \ge 2$ and $0 \le t_1 < t_2 \le T$. Let $\Phi \colon [0,T]\times\Omega \to \mathrm{HS}(U_0, \mathbb{H})$ be a predictable process such that
\[
    \mathbb{E}\bigg[ \bigg( \int_{t_1}^{t_2} \|\Phi(\sigma)\|_{\mathrm{HS}(U_0, \mathbb{H})}^2\, d\sigma \bigg)^{p/2} \bigg] < \infty.
\]
Then the stochastic integral $\int_{t_1}^{t_2} \Phi(\sigma)\, dW(\sigma)$ is well-defined and satisfies
\[
    \mathbb{E}\bigg[ \bigg\| \int_{t_1}^{t_2} \Phi(\sigma)\, dW(\sigma) \bigg\|_{\mathbb{H}}^p \bigg]
    \le C_p\, \mathbb{E}\bigg[ \bigg( \int_{t_1}^{t_2} \|\Phi(\sigma)\|_{\mathrm{HS}(U_0, \mathbb{H})}^2\, d\sigma \bigg)^{p/2} \bigg],
\]
where the constant $C_p$ is given by
\[
    C_p = \bigg(\frac{p(p-1)}{2}\bigg)^{p/2} \bigg(\frac{p}{p-1}\bigg)^{p\big(\frac{p}{2}-1\big)}.
\] 
\end{lemma}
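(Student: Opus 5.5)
The statement is the Burkholder--Davis--Gundy inequality for $\mathbb{H}$-valued stochastic integrals with the explicit constant $C_p$. I plan to prove it by applying It\^o's formula to $\|\cdot\|_{\mathbb{H}}^p$ along the martingale $M(t) \coloneqq \int_{t_1}^{t}\Phi(\sigma)\,dW(\sigma)$, for $t\in[t_1,t_2]$. I then close the estimate with H\"older's inequality and Doob's maximal inequality. This is the route in Kruse's monograph and in Da Prato--Zabczyk.

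\textbf{Step 1 (well-posedness and localization).} Because $\Phi$ is predictable and the moment hypothesis gives $\int_{t_1}^{t_2}\|\Phi\|_{\mathrm{HS}(U_0,\mathbb{H})}^2\,d\sigma<\infty$ almost surely, the It\^o integral is defined in the standard way, as a limit of elementary integrands. It is a continuous local martingale. I will introduce the stopping times
\[
\tau_n \coloneqq \inf\Big\{t\ge t_1:\ \|M(t)\|_{\mathbb{H}}\ge n \ \text{or}\ \int_{t_1}^t\|\Phi\|_{\mathrm{HS}}^2\,d\sigma\ge n\Big\}\wedge t_2 .
\]
I will prove the bound for the stopped process $M(\cdot\wedge\tau_n)$ and then let $n\to\infty$. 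Fatou's lemma on the left side and monotone convergence on the right side remove the localization.

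\textbf{Step 2 (It\^o formula).} Since $p\ge2$, the map $f(x)=\|x\|_{\mathbb{H}}^p$ is $C^2$ on $\mathbb{H}$. Its derivatives satisfy
\[
Df(x)=p\|x\|^{p-2}x,\qquad \|D^2f(x)\|\le p(p-1)\|x\|^{p-2}.
\]
It\^o's formula gives
\[
\|M(t)\|^p=\int_{t_1}^t p\|M\|^{p-2}\big(M,\Phi\,dW\big)+\frac12\int_{t_1}^t \operatorname{Tr}\big(D^2f(M)\,\Phi\Phi^*\big)\,d\sigma .
\]
The first term is a true martingale after stopping, so it vanishes in expectation. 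Bounding the trace term by $p(p-1)\|M\|^{p-2}\|\Phi\|_{\mathrm{HS}}^2$ yields
\[
\mathbb{E}\|M(t)\|^p\le \frac{p(p-1)}2\,\mathbb{E}\Big[\sup_{s\le t}\|M(s)\|^{p-2}\int_{t_1}^t\|\Phi\|_{\mathrm{HS}}^2\,d\sigma\Big].
\]

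\textbf{Step 3 (closing the estimate).} I apply H\"older's inequality with exponents $p/(p-2)$ and $p/2$. Then Doob's $L^p$ inequality gives $\mathbb{E}\sup_s\|M(s)\|^p\le (p/(p-1))^p\,\mathbb{E}\|M(t)\|^p$. Together these produce
\[
\mathbb{E}\|M(t)\|^p\le \frac{p(p-1)}2\Big(\frac{p}{p-1}\Big)^{p-2}\big(\mathbb{E}\|M(t)\|^p\big)^{1-2/p}\Big(\mathbb{E}\Big[\Big(\int_{t_1}^t\|\Phi\|_{\mathrm{HS}}^2\,d\sigma\Big)^{p/2}\Big]\Big)^{2/p}.
\]
After localization $\mathbb{E}\|M(t)\|^p$ is finite, so I can divide by $(\mathbb{E}\|M(t)\|^p)^{1-2/p}$ when it is nonzero; if it is zero there is nothing to prove. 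Raising the result to the power $p/2$ gives exactly the stated constant $C_p=(p(p-1)/2)^{p/2}(p/(p-1))^{p(p/2-1)}$.

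\textbf{Main obstacle.} The delicate point is justifying It\^o's formula for $\|\cdot\|^p$ and the division step in Step 3. Both require finiteness, which is why the stopping-time localization is introduced in Step 1. The constant tracking itself is routine.
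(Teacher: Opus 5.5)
Your proof is correct and follows the same argument as the paper's source. The paper gives no proof of its own; it cites Kruse's Proposition~2.12, whose argument (from Da Prato--Zabczyk) is exactly It\^o's formula for $\|\cdot\|_{\mathbb{H}}^p$, H\"older with exponents $p/(p-2)$ and $p/2$, and Doob's maximal inequality, which is where $C_p=\big(\tfrac{p(p-1)}{2}(\tfrac{p}{p-1})^{p-2}\big)^{p/2}$ comes from. One small point: for $p=2$ the H\"older exponent $p/(p-2)$ is undefined, but there your Step~2 already gives the bound with $C_2=1$ (the It\^o isometry), so that case needs only a one-line remark.
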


We impose the following assumptions on the parameters and the nonlinear coefficients of the problem.

\begin{assumption}[Conditions on parameters]\label{ass:parameters}
Throughout the article, the parameters \(\mu, \nu, \alpha, \beta, \hat{\alpha}, \hat{\beta} \in \mathbb{R}\) are assumed to satisfy
\begin{equation}\label{eq:parameter_constraints}
\begin{gathered}
0 \le \nu-\mu < 1, \qquad
\alpha < 2-\nu, \qquad
\beta < 1-\nu, \\[0.3em]
0 \le \hat{\alpha} < 1-\frac12\max\{\nu-\mu,\alpha+\nu\},
\qquad
0 \le \hat{\beta} < \frac12-\frac12\max\{\nu-\mu,\beta+\nu\}.
\end{gathered}
\end{equation}
\end{assumption}

\begin{assumption}[Measurability of drift and diffusion terms] \label{ass:measurability_drift_diff}
The mapping $F \colon [0,T]\times\Omega \times \dot{\mathbb{H}}^{\mu} \to \dot{\mathbb{H}}^{-\alpha}$ is measurable from the measurable space $(\Omega_T \times \dot{\mathbb{H}}^{\mu}, \mathcal{P}_T \otimes \mathcal{B}(\dot{\mathbb{H}}^{\mu}))$ to $(\dot{\mathbb{H}}^{-\alpha}, \mathcal{B}(\dot{\mathbb{H}}^{-\alpha}))$. Similarly, the mapping $G \colon [0,T]\times\Omega \times \dot{\mathbb{H}}^{\mu} \to \mathrm{HS}(U_0,\dot{\mathbb{H}}^{-\beta})$ is measurable from $(\Omega_T \times \dot{\mathbb{H}}^{\mu}, \mathcal{P}_T \otimes \mathcal{B}(\dot{\mathbb{H}}^{\mu}))$ to $(\mathrm{HS}(U_0, \dot{\mathbb{H}}^{-\beta}), \mathcal{B}(\mathrm{HS}(U_0, \dot{\mathbb{H}}^{-\beta})))$. Here, $\mathcal{B}(\mathcal{Y})$ denotes the Borel $\sigma$-algebra on a topological space $\mathcal{Y}$.
\end{assumption}

\subsection{Globally Lipschitz nonlinearities}\label{subsec:Globally Lipschitz nonlinearities}
In this subsection, we provide the well-posedness and (pathwise) regularity results for the problem \eqref{eq:SEE} under under globally Lipschitz continuous drift \(F\) and diffusion \(G\) coefficients. We impose the following assumptions on \(F\) and \(G\):
\begin{assumption}[Global Lipschitz  conditions]\label{ass:global_lips_nonlin} 
Under the Assumption~\ref{ass:parameters}, there exist positive constants $L_j$, $j=1,2,3,4$, such that for all $t\in(0,T]$, \(F\) and \(G\) satisfy the following estimates $\mathbb{P}$-almost surely:
\begin{align*}
    &\|F(t,  u) - F(t,  v)\|_{\dot{\mathbb{H}}^{-\alpha}} \le L_1 \, t^{-\hat{\alpha}} \|u - v\|_{\dot{\mathbb{H}}^{\nu}}\;\forall\,u, v \in \dot{\mathbb{H}}^{\nu},\quad \|F(t, 0)\|_{\dot{\mathbb{H}}^{-\alpha}} \le L_2 \, t^{-\hat{\alpha}},  \\
    &\|G(t,  u) - G(t, v)\|_{\mathrm{HS}(U_0, \dot{\mathbb{H}}^{-\beta})} \le L_3 \, t^{-\hat{\beta}} \|u - v\|_{\dot{\mathbb{H}}^{\nu}}\;\forall u, v \in \dot{\mathbb{H}}^{\nu}, \quad \|G(t, 0)\|_{\mathrm{HS}(U_0, \dot{\mathbb{H}}^{-\beta})} \le L_4 \, t^{-\hat{\beta}}.
\end{align*}
\end{assumption}
\begin{remark}
We note that structural assumptions of a similar type on the nonlinear coefficients $F$ and $G$ have also been considered in the literature, notably by Andersson et al.~\cite{MR4172830}.
\end{remark}

\begin{definition}[Mild solution]\label{def:mild_solution}
Let \(\mu\in\mathbb R\) and \(X_0\in L^0(\Omega,\mathcal F_0;\dot{\mathbb H}^{\mu})\). Assume that the measurability condition in Assumption~\ref{ass:measurability_drift_diff} is satisfied by \(F\) and \(G\). An $\dot{\mathbb{H}}^{\mu}$-valued predictable stochastic process $\{X(t)\}_{t \in [0,T]}$ is called a mild solution to \eqref{eq:SEE} if, for every $t \in [0,T]$, the following conditions hold $\mathbb{P}$-almost surely:
\begin{enumerate}[label={\upshape(\roman*)}]
    \item \label{cond:mild_F} $\sigma \mapsto S(t-\sigma)F(\sigma, X(\sigma)) \in L^1(0,t; \dot{\mathbb{H}}^\mu)$,
    \item \label{cond:mild_G} $\sigma \mapsto S(t-\sigma)G(\sigma, X(\sigma)) \in L^2(0,t; \mathrm{HS}(U_0, \dot{\mathbb{H}}^{\mu}))$, and
    \item \label{cond:mild_eq} $X(t)$ satisfies the following equality in $\dot{\mathbb{H}}^{\mu}$:
\begin{equation}\label{eq:mild_solution_formula}
    X(t) = S(t)X_0 + \int_0^t S(t-\sigma)F(\sigma, X(\sigma)) \, d\sigma + \int_0^t S(t-\sigma)G(\sigma, X(\sigma)) \, dW(\sigma).
\end{equation}
\end{enumerate}
\end{definition}

Let us introduce the Banach space $\mathbb{V}_p$ of predictable processes, which will serve as the natural state space for our solution. For a fixed $T>0$, $\lambda \coloneqq (\nu-\mu)/2\geq 0 $, and $p \ge 2$, we define the linear space
\begin{align}
    \nonumber&\mathbb{V}_p \coloneqq
\Big\{
\varphi \in \mathcal{C}\big([0,T]; L^p(\Omega; \dot{\mathbb{H}}^{\mu})\big) :\,t^\lambda \varphi\in \mathcal{C}_b\big((0,T]; L^p(\Omega; \dot{\mathbb{H}}^{\nu})\big), \text{ and the process} \\
\nonumber&\hspace{4cm}
\{\varphi(t)\}_{t\in[0,T]} \text{ admits an }\dot{\mathbb{H}}^\mu\text{-valued predictable version}\Big\}
\end{align}
equipped with the norm
\[
\|\varphi\|_{\mathbb{V}_p} \coloneqq \sup_{t \in (0,T]} \Big( \|\varphi(t)\|_{L^p(\Omega; \dot{\mathbb{H}}^{\mu})} + t^{\lambda} \|\varphi(t)\|_{L^p(\Omega; \dot{\mathbb{H}}^{\nu})} \Big).
\]
It is straightforward to verify that $(\mathbb{V}_p,\|\cdot\|_{\mathbb{V}_p})$ is a Banach space. Now, we state our main well-posedness and regularity results. 

\begin{theorem}[Global existence, uniqueness, and regularity]
\label{thm:main_global_existence}
Suppose that Assumption~\ref{ass:operator_A} holds, and that Assumptions~\ref{ass:measurability_drift_diff} and~\ref{ass:global_lips_nonlin} hold for a set of parameters satisfying Assumption~\ref{ass:parameters}. Let $X_0\in L^p(\Omega, \mathcal{F}_0; \dot{\mathbb{H}}^{\mu})$ for some $p \ge 2$. Then the following hold:
\begin{enumerate}[label={\upshape(\roman*)}]
    \item \textbf{Existence and uniqueness:} There exists a unique mild solution $X \in \mathbb{V}_p$ to problem \eqref{eq:SEE}. Moreover, there exists a constant $C>0$, independent of $t$ and $X_0$, such that
    \begin{align}
        \label{eq:a_priori_estimate}
        \|X\|_{\mathbb{V}_p} \le C\big(1 + \|X_0\|_{L^p(\Omega;\dot{\mathbb{H}}^{\mu})}\big).
    \end{align}
    
    \item \textbf{Continuous dependence on the initial data:} For any two initial data $X_0, Y_0 \in L^p(\Omega,\mathcal{F}_0;\dot{\mathbb{H}}^{\mu})$, the corresponding mild solutions $X, Y \in \mathbb{V}_p$ satisfy
    \begin{align}
        \label{eq:cont_dep_ini_data}& \|X - Y\|_{\mathbb{V}_p} \le C\|X_0 - Y_0\|_{L^p(\Omega;\dot{\mathbb{H}}^{\mu})},
    \end{align}
    where $C>0$ is a constant independent of the initial data.
    
    \item \textbf{Temporal regularity:} For any arbitrary $a \in (0, T)$, the mild solution $X$ exhibits local Hölder continuity in time with respect to the $\dot{\mathbb{H}}^{\nu}$-norm. Specifically,
    \[
        X \in \mathcal{C}^{0,\delta}\big([a,T]; L^p(\Omega;\dot{\mathbb{H}}^{\nu})\big), \quad 
        0 < \delta < \min \Big\{\; 1 - \frac{1}{2}(\nu+\alpha)^+, \; \frac{1}{2} \Big( 1 - (\nu+\beta)^+ \Big) \Big\}.
    \]
    Furthermore, if the initial data possesses higher spatial regularity such that $X_0 \in L^p(\Omega; \dot{\mathbb{H}}^{\mu+\varepsilon})$ for some $0 < \varepsilon \le 1 $, then the solution is globally Hölder continuous in time with respect to the $\dot{\mathbb{H}}^{\mu}$-norm:
    \[
        X \in \mathcal{C}^{0,\delta_\varepsilon}\big([0,T]; L^p(\Omega;\dot{\mathbb{H}}^{\mu})\big), 
    \]
    where the Hölder exponent satisfies
    \[
        0 < \delta_\varepsilon < \min \Big\{ \frac{\varepsilon}{2}, \; 1 - \frac{1}{2}(\mu+\alpha)^+ - (\hat{\alpha} + \lambda), \; \frac{1}{2}  - \frac{1}{2}(\mu+\beta)^+ - (\hat{\beta} + \lambda)\Big\}.
    \]
\end{enumerate}
\end{theorem}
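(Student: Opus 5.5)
We will obtain the solution as the fixed point of the mild-solution map
\[
\Phi(\varphi)(t) \coloneqq S(t)X_0 + \int_0^t S(t-\sigma)F(\sigma,\varphi(\sigma))\,d\sigma + \int_0^t S(t-\sigma)G(\sigma,\varphi(\sigma))\,dW(\sigma)
\]
on $\mathbb{V}_p$. To get a strict contraction we work with the equivalent weighted norm
\[
\|\varphi\|_{\kappa} \coloneqq \sup_{t\in(0,T]} e^{-\kappa t}\big(\|\varphi(t)\|_{L^p(\Omega;\dot{\mathbb{H}}^{\mu})} + t^\lambda\|\varphi(t)\|_{L^p(\Omega;\dot{\mathbb{H}}^{\nu})}\big),
\]
with $\kappa$ large. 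An alternative to the weighted norm is a fixed point on short intervals $[0,T_1]$ followed by concatenation, but the singular weights $t^{-\lambda}$ make restarting awkward. The proof has three parts: well-definedness of $\Phi$, the contraction and a priori bound, and the Hölder estimates.

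\textbf{Step 1: $\Phi$ maps $\mathbb{V}_p$ into itself.}
\begin{itemize}
\item \emph{Initial term.} Lemma~\ref{lem:semigroup_estimates}\ref{est:semigroup_smoothing} with $\gamma=\lambda$ gives $t^\lambda\|S(t)X_0\|_{\dot{\mathbb{H}}^\nu}\le C\|X_0\|_{\dot{\mathbb{H}}^\mu}$. Continuity of $t\mapsto S(t)X_0$ in $L^p(\Omega;\dot{\mathbb{H}}^\mu)$ follows from strong continuity and dominated convergence.
\item \emph{Drift term.} Combining Assumption~\ref{ass:global_lips_nonlin} with $\|\varphi(\sigma)\|_{L^p(\Omega;\dot{\mathbb{H}}^\nu)}\le \sigma^{-\lambda}\|\varphi\|_{\mathbb{V}_p}$ yields
\[
\Big\|\int_0^t S(t-\sigma)F\,d\sigma\Big\|_{L^p(\Omega;\dot{\mathbb{H}}^{\nu})} \le C\int_0^t (t-\sigma)^{-(\nu+\alpha)^+/2}\,\sigma^{-\hat\alpha-\lambda}\,d\sigma\,(1+\|\varphi\|_{\mathbb{V}_p}).
\]
This is a Beta integral of order $t^{1-(\nu+\alpha)^+/2-\hat\alpha-\lambda}$. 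It is finite because $\alpha+\nu<2$ and $\hat\alpha+\lambda<1$. After multiplying by $t^\lambda$ it stays bounded, since the exponent $1-(\nu+\alpha)^+/2-\hat\alpha\ge 0$ by the constraint on $\hat\alpha$. The $\dot{\mathbb{H}}^\mu$ bound is analogous, using $\mu\le\nu$.
\item \emph{Stochastic term.} Lemma~\ref{lem:BDG} and Minkowski's inequality in $L^{p/2}$ give the square version
\[
\Big(\int_0^t (t-\sigma)^{-(\nu+\beta)^+}\sigma^{-2\hat\beta-2\lambda}\,d\sigma\Big)^{1/2},
\]
which is finite by $\beta+\nu<1$ and $2\hat\beta+2\lambda<1$.
\item \emph{Continuity and predictability.} Continuity in $t$ comes from splitting at $s<t$ and using Lemma~\ref{lem:semigroup_estimates}\ref{est:semigroup_continuity_t}; this is the same computation as in Step 3. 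Since $\Phi(\varphi)$ is then mean-square continuous and adapted, it has a predictable version.
\end{itemize}

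\textbf{Step 2: contraction, a priori bound and continuous dependence.} Applying the Step 1 bounds to $\Phi(\varphi)-\Phi(\psi)$, the factor $e^{-\kappa t}$ turns each Beta integral into
\[
\int_0^t e^{-\kappa(t-\sigma)}(t-\sigma)^{-a}\sigma^{-b}\,d\sigma .
\]
After multiplication by $t^{\lambda}$, this tends to $0$ uniformly in $t$ as $\kappa\to\infty$; this follows by splitting the integral near $\sigma=t$ and near $\sigma=0$. Hence $\Phi$ is a contraction for large $\kappa$, and Banach's fixed point theorem gives existence and uniqueness in $\mathbb{V}_p$.

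Uniqueness among all mild solutions in $\mathbb{V}_p$ is immediate, since any such solution is a fixed point of $\Phi$. The bound \eqref{eq:a_priori_estimate} follows from $\|X\|_\kappa\le \|\Phi(X)-\Phi(0)\|_\kappa+\|\Phi(0)\|_\kappa$ together with the contraction. Estimate \eqref{eq:cont_dep_ini_data} follows in the same way, since the two maps differ only in the term $S(t)(X_0-Y_0)$.

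\textbf{Step 3: Hölder regularity.}
\begin{itemize}
\item \emph{Local estimate on $[a,T]$.} For $a\le s<t$ we write $X(t)-X(s)$ as the sum of three pieces:
\begin{itemize}
\item $(S(t-s)-I)X(s)$-type terms, applied to the initial datum and to both convolutions up to $s$;
\item $\int_s^t S(t-\sigma)F\,d\sigma$;
\item $\int_s^t S(t-\sigma)G\,dW$.
\end{itemize}
We apply Lemma~\ref{lem:semigroup_estimates}\ref{est:semigroup_continuity_t} with $\gamma=\nu/2$ and $\rho=\delta$ to the semigroup differences. Because $s\ge a$, singular factors such as $s^{-\lambda-\delta}$ are harmless. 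The short-interval integrals give $(t-s)^{1-(\nu+\alpha)^+/2}$ and $(t-s)^{(1-(\nu+\beta)^+)/2}$. The difference term inside the convolutions is the delicate one: it produces integrals $\int_0^s (s-\sigma)^{-(\nu+\alpha)^+/2-\delta}\sigma^{-\hat\alpha-\lambda}\,d\sigma$. These are finite exactly for $\delta<1-\tfrac12(\nu+\alpha)^+$, and similarly for the noise term.
\item \emph{Global estimate for smoother data.} For the global $\dot{\mathbb{H}}^\mu$ estimate with $X_0\in\dot{\mathbb{H}}^{\mu+\varepsilon}$, we use Lemma~\ref{lem:semigroup_estimates}\ref{est:semigroup_continuity_zero} to get $\|(S(t)-S(s))X_0\|_{\dot{\mathbb{H}}^\mu}\le C(t-s)^{\varepsilon/2}$. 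The convolution terms are now estimated in $\dot{\mathbb{H}}^\mu$ down to $s=0$, where the weight $\sigma^{-\hat\alpha-\lambda}$ is no longer compensated. Scaling then shows we need $\delta_\varepsilon<1-\tfrac12(\mu+\alpha)^+-\hat\alpha-\lambda$, and similarly for the noise.
\end{itemize}

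\textbf{Main obstacle.} We expect the main difficulty to be the convolution difference terms near the singular endpoints. One has to control integrals of the form $\int_0^s[(t-\sigma)^{-a}-(s-\sigma)^{-a}]\sigma^{-b}\,d\sigma$, or the equivalent operator version, uniformly, without losing Hölder regularity. We would first try the operator splitting $S(t-\sigma)-S(s-\sigma)$ with $\rho=\delta$ directly. If that gives a non-integrable exponent, we would fall back on the fractional convolution estimates the paper announces (Lemma~\ref{lem:det_smoothing}).
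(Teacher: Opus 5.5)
Your proposal is correct and follows essentially the paper's proof. The common ingredients are: Banach's fixed point theorem for the mild-solution map on $\mathbb{V}_p$ under the exponentially weighted norm; Beta-integral and BDG bounds for the two convolutions; the a priori and continuous-dependence estimates read off from the contraction; and H\"older regularity obtained by splitting off $\int_s^t$ and applying Lemma~\ref{lem:semigroup_estimates}(iii) to $S(t-\sigma)-S(s-\sigma)$, so the fallback to Lemma~\ref{lem:det_smoothing} is not needed. In that step the right exponents are $\gamma=\lambda$ on the initial datum and $\gamma=\frac12(\nu+\alpha)$, $\frac12(\nu+\beta)$ on the convolutions, not $\gamma=\nu/2$. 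The differences are only cosmetic: the paper makes the contraction constant small via H\"older's inequality in $\sigma$ rather than by splitting the integral, and it gets the local estimate from the shifted equation restarted at $t_0=a/2$ rather than splitting directly at $s\ge a$.
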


\begin{remark}\label{rem:global_case_comparison}
\begin{enumerate}[label=(\roman*)]
    \item \label{rem:andersson_article}
     If the diffusion coefficient $G(t,u)$ is constant with respect to the second variable $u$, a careful inspection of the proof reveals that the parameter restrictions $0\leq \nu-\mu<1$ and $0\leq \hat{\beta} < \frac12-\frac12\max(\nu-\mu,\beta+\nu)$ in Assumption~\ref{ass:parameters} can be relaxed to $0\leq \nu-\mu<2$ and $0\leq \hat{\beta} < \frac12-\frac12\max\!\left(\frac{\nu-\mu}{2},\beta+\nu\right)$. Consequently, the main result established by Andersson et al.~\cite[Theorem 2.9]{MR4172830} follows as a special case of Theorem \ref{thm:main_global_existence}. Specifically, they consider initial data $X_0\in L^p(\Omega, \mathcal{F}_0; H_{-\max\{\delta, 0\}})$ and globally Lipschitz coefficients on $H$. Under the identification $H_r \equiv \dot{\mathbb{H}}^{2r}$, this setting corresponds to the choices $\nu = 0$, $\mu = -2\max\{\delta, 0\}$, $\alpha = 2\tilde{\alpha}$, and $\beta = 2\tilde{\beta}$ (where $\tilde{\alpha}$ and $\tilde{\beta}$ correspond to the parameters $\alpha$ and $\beta$ therein). By keeping the temporal exponents $\hat{\alpha}$ and $\hat{\beta}$ identical, we obtain $\lambda = \max\{\delta, 0\}$. Under these values, the conditions in Assumption \ref{ass:parameters} simplify directly to the parameter bounds and integrability conditions required in \cite[Theorem 2.9]{MR4172830}.
     \item \label{rem:jentzen_article}
     While the condition $\nu \ge \mu$ prevents recovering the higher spatial and temporal regularity results of Jentzen and R\"{o}ckner~\cite[Theorem 1]{MR2852200}, their existence and uniqueness result for $H$-valued solutions is a special case of Theorem \ref{thm:main_global_existence}. They consider initial data in $H$ and globally Lipschitz coefficients. We recover this setting by choosing $\nu = \mu = \alpha = \beta = \hat{\alpha} = \hat{\beta} = 0$. This yields $\lambda = 0$ and satisfies the parameter conditions in Assumption \ref{ass:parameters}, reducing our solution space $\mathbb{V}_p$ to $\mathcal{C}([0,T]; L^p(\Omega; H))$.
     
     \item \label{rem:seidler_global}
     The global well-posedness result established by Seidler~\cite[Theorem 1.3]{MR1213834} for equations with time-independent linear operators is a special case of Theorem \ref{thm:main_global_existence}. Specifically, Seidler considers globally Lipschitz coefficients in $H_\delta$ (setting $\nu = 2\delta$) that map into $H$ without temporal singularities ($\alpha = \beta = \hat{\alpha} = \hat{\beta} = 0$). For initial data in $H_{\tilde{\alpha}}$ (setting $\mu = 2\tilde{\alpha}$), the required condition is $0 \le \tilde{\alpha} \le \delta$. This yields the time-weighted bound $t^{\delta - \tilde{\alpha}}$, exactly matching our parameter $\lambda = (\nu - \mu)/2$.
\end{enumerate}
\end{remark}

The following result establishes the pathwise spatio-temporal regularity of the mild solution.

\begin{theorem}[Pathwise regularity]
\label{thm:pathwise_regularity}
Suppose that Assumption~\ref{ass:operator_A} holds, and that Assumptions~\ref{ass:measurability_drift_diff} and~\ref{ass:global_lips_nonlin} hold for a set of parameters satisfying Assumption~\ref{ass:parameters}. Let $X_0 \in L^p(\Omega,\mathcal{F}_0; \dot{\mathbb{H}}^\mu)$ with $p > 2$. Then there exist positive constants $C$ and $C_a$ (where $C_a$ may diverge as $a \searrow 0$) such that the mild solution \(X\) to \eqref{eq:SEE} exhibits the following pathwise regularity properties:

  \begin{enumerate}[label={\upshape(\roman*)}]
  \item \textit{\textbf{Continuity:}} For $(\mu+\alpha)^+ < 2 - \frac{2}{p}$ and $(\mu+\beta)^+ < 1 - \frac{2}{p}$, we have $X \in L^p(\Omega; \mathcal{C}([0,T]; \dot{\mathbb{H}}^\mu))$, satisfying the bound: $$\|X\|_{L^p(\Omega;\mathcal{C}([0,T];\dot{\mathbb{H}}^{\mu}))}\;\leq\;C(1+\|X_0\|_{L^p(\Omega;\dot{\mathbb{H}}^{\mu})}).$$
  \item \textit{\textbf{Local H\"{o}lder regularity:}} For any $a > 0$, provided $(\nu+\alpha)^+ < 2 - \frac{2}{p}$, $(\nu+\beta)^+ < 1 - \frac{2}{p}$, and $0 < \delta < \min\big\{1 - \frac{1}{p} - \frac{1}{2}(\nu+\alpha)^+, \frac{1}{2} - \frac{1}{p} - \frac{1}{2}(\nu+\beta)^+\big\},$ we have $X \in L^p(\Omega; \mathring{\mathcal{C}}^{0, \delta}([a, T]; \dot{\mathbb{H}}^\nu))$ with the local estimate: $$\|X\|_{L^p(\Omega;\mathring{\mathcal{C}}^{0, \delta}([a,T];\dot{\mathbb{H}}^{\nu}))}\;\leq\;C_a(1+\|X_0\|_{L^p(\Omega;\dot{\mathbb{H}}^{\mu})}).$$
  \item \textit{\textbf{Global H\"{o}lder regularity:}} If the initial data possesses enhanced regularity $X_0 \in L^p(\Omega; \dot{\mathbb{H}}^{\mu+\epsilon})$ for some $0<\epsilon \le 1$, then for $(\mu+\alpha)^+ < 2 - \frac{2}{p}$, $(\mu+\beta)^+ < 1 - \frac{2}{p}$, and $0 < \delta < \min\big\{\frac{\epsilon}{2}, 1 - \hat{\alpha} - \lambda - \frac{1}{2}(\mu+\alpha)^+, 1 - \frac{1}{p} - \frac{1}{2}(\mu+\alpha)^+, \frac{1}{2} - \hat{\beta} - \lambda - \frac{1}{2}(\mu+\beta)^+, \frac{1}{2} - \frac{1}{p} - \frac{1}{2}(\mu+\beta)^+\big\},$ it holds that $X \in L^p\left(\Omega; \mathring{\mathcal{C}}^{0, \delta}([0, T]; \dot{\mathbb{H}}^\mu)\right)$. Moreover, the corresponding global norm is bounded by $C(1+\|X_0\|_{L^p(\Omega;\dot{\mathbb{H}}^{\mu+\epsilon})}).$
  \end{enumerate}
\end{theorem}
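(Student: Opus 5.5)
We split the mild solution $X\in\mathbb V_p$ from Theorem~\ref{thm:main_global_existence} as $X=S(\cdot)X_0+\mathcal I_F+\mathcal I_G$, where
\[
\mathcal I_F(t)=\int_0^tS(t-\sigma)F(\sigma,X(\sigma))\,d\sigma,\qquad \mathcal I_G(t)=\int_0^tS(t-\sigma)G(\sigma,X(\sigma))\,dW(\sigma),
\]
and treat the three terms separately. We use the factorization method of Da Prato--Kwapie\'n--Zabczyk for $\mathcal I_G$, and direct deterministic bounds (the fractional convolution estimates of Lemma~\ref{lem:det_smoothing}) for $\mathcal I_F$.

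\textbf{Linear part.} The term $S(\cdot)X_0$ is pathwise continuous in $\dot{\mathbb H}^\mu$, since $S$ is a contraction $C_0$-semigroup on every $\dot{\mathbb H}^\gamma$, and $\sup_t\|S(t)X_0\|_{\dot{\mathbb H}^\mu}\le\|X_0\|_{\dot{\mathbb H}^\mu}$. On $[a,T]$, Lemma~\ref{lem:semigroup_estimates}(iii) gives
\[
\|S(t)X_0-S(s)X_0\|_{\dot{\mathbb H}^\nu}\le C(t-s)^{\rho}a^{-\lambda-\rho}\|X_0\|_{\dot{\mathbb H}^\mu}
\]
for any $\rho\le1$. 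Taking $\rho>\delta$ gives the little-H\"older property. For part (iii), the same lemma with $\gamma=-\epsilon/2$ and $\rho=\epsilon/2$ yields $\epsilon/2$-H\"older regularity on $[0,T]$, and any $\delta<\epsilon/2$ lies in the little H\"older space.

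\textbf{Drift term.} By Assumption~\ref{ass:global_lips_nonlin} and the $\mathbb V_p$ bound \eqref{eq:a_priori_estimate},
\[
\|F(\sigma,X(\sigma))\|_{\dot{\mathbb H}^{-\alpha}}\le C\sigma^{-\hat\alpha}\bigl(1+\sigma^{-\lambda}\,\sigma^{\lambda}\|X(\sigma)\|_{\dot{\mathbb H}^\nu}\bigr).
\]
Write $f(\sigma)=\sigma^{\hat\alpha+\lambda}\|F(\sigma,X(\sigma))\|_{\dot{\mathbb H}^{-\alpha}}$. Then $\mathbb E\int_0^T|f|^p\,d\sigma<\infty$. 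Since $\hat\alpha+\lambda<1$ by Assumption~\ref{ass:parameters}, H\"older's inequality in time bounds $\|\mathcal I_F(t)\|_{\dot{\mathbb H}^\mu}$ by $\|f\|_{L^p(0,T)}$ times
\[
\Bigl(\int_0^t (t-\sigma)^{-\frac{(\mu+\alpha)^+}{2}p'}\sigma^{-(\hat\alpha+\lambda)p'}\,d\sigma\Bigr)^{1/p'},
\]
which is finite under $(\mu+\alpha)^+<2-2/p$ together with the $\hat\alpha$ constraint. Continuity and H\"older increments follow from the same splitting $S(t-\sigma)-S(s-\sigma)$ plus the piece over $[s,t]$, via Lemma~\ref{lem:semigroup_estimates}(iii). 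On $[a,T]$ the singular weight $\sigma^{-\hat\alpha-\lambda}$ is harmless near $t\ge a$ and integrable near $0$. This produces the exponents $1-\frac1p-\frac12(\nu+\alpha)^+$ in (ii) and, globally, the additional exponent $1-\hat\alpha-\lambda-\frac12(\mu+\alpha)^+$ in (iii).

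\textbf{Stochastic term (main obstacle).} We fix $\theta\in(1/p,\tfrac12)$ and write
\[
\mathcal I_G(t)=\frac{\sin\pi\theta}{\pi}\int_0^t(t-s)^{\theta-1}S(t-s)Y_\theta(s)\,ds,\qquad Y_\theta(s)=\int_0^s(s-\sigma)^{-\theta}S(s-\sigma)G(\sigma,X(\sigma))\,dW(\sigma).
\]
Lemma~\ref{lem:BDG} together with the growth of $G$ gives
\[
\mathbb E\|Y_\theta(s)\|_{\dot{\mathbb H}^\gamma}^p\le C\Bigl(\int_0^s(s-\sigma)^{-2\theta-(\gamma+\beta)^+}\sigma^{-2(\hat\beta+\lambda)}\,d\sigma\Bigr)^{p/2},
\]
which is finite if $2\theta+(\gamma+\beta)^+<1$ and $\hat\beta+\lambda<\tfrac12$, with the latter guaranteed by Assumption~\ref{ass:parameters}. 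The delicate point is that $\sup_s\mathbb E\|Y_\theta(s)\|^p$ may blow up like $s^{-p(\theta+\hat\beta+\lambda)+\dots}$ near $0$. We therefore need a weighted version of the deterministic operator
\[
R_\theta g(t)=\int_0^t(t-s)^{\theta-1}S(t-s)g(s)\,ds,
\]
mapping $L^p(0,T;s^{-\kappa}\,ds)$-type spaces into $\mathcal C([0,T];\dot{\mathbb H}^\mu)$, respectively into $\mathring{\mathcal C}^{0,\delta}([a,T];\dot{\mathbb H}^\nu)$ or $\mathring{\mathcal C}^{0,\delta}([0,T];\dot{\mathbb H}^\mu)$. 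We would prove this mapping property via H\"older's inequality as in the drift case, with $\delta<\theta-1/p-(\text{loss})$. Optimizing $\theta$ below $\tfrac12-\tfrac12(\cdot+\beta)^+$ yields the stated ranges for $\delta$. Little-H\"older membership (rather than plain H\"older) comes from having strict inequalities: we prove the bound for a slightly larger exponent $\delta'$. Taking $L^p(\Omega)$ norms and using \eqref{eq:a_priori_estimate} gives all three estimates with constants $C$, $C_a$; for (iii) we use $\|X_0\|_{\dot{\mathbb H}^\mu}\le C\|X_0\|_{\dot{\mathbb H}^{\mu+\epsilon}}$.
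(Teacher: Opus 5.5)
Your decomposition and overall strategy are viable, but the drift step fails as written. With $f(\sigma)=\sigma^{\hat\alpha+\lambda}\|F(\sigma,X(\sigma))\|_{\dot{\mathbb H}^{-\alpha}}$, the H\"older factor $\bigl(\int_0^t(t-\sigma)^{-\frac12(\mu+\alpha)^+p'}\sigma^{-(\hat\alpha+\lambda)p'}\,d\sigma\bigr)^{1/p'}$ is finite only if $(\hat\alpha+\lambda)p'<1$, i.e.\ $\hat\alpha+\lambda<1-\frac1p$. Assumption~\ref{ass:parameters} only gives $\hat\alpha+\lambda<1$. For example, $\mu=\nu=\alpha=\beta=\hat\beta=0$, $\hat\alpha=0.9$, $p=3$ satisfies all parameter hypotheses of (i)--(iii), yet $(\hat\alpha+\lambda)p'=1.35$. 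The same problem appears on $[a,T]$, because your convolution still starts at $0$.

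Even when the factor is finite, this weight costs you $\frac1p$ in (iii). Knowing only $f\in L^p(0,T)$, the best uniform-in-$s$ bound for $\int_s^t(t-\sigma)^{-\frac12(\mu+\alpha)^+}\sigma^{-(\hat\alpha+\lambda)}f(\sigma)\,d\sigma$ is $(t-s)^{1-\frac1p-\frac12(\mu+\alpha)^+-\hat\alpha-\lambda}$. That is not the exponent $1-\hat\alpha-\lambda-\frac12(\mu+\alpha)^+$ you assert.

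The repair is to put into $f$ only as much weight as $p$-integrability requires. Take $g(\sigma)=\sigma^{c}\|F(\sigma,X(\sigma))\|_{\dot{\mathbb H}^{-\alpha}}$ with $(\hat\alpha+\lambda-\frac1p)^+<c<1-\frac1p$; this interval is nonempty because $\hat\alpha+\lambda<1$. Then $\mathbb E\|g\|_{L^p(0,T)}^p\le C\int_0^T\sigma^{p(c-\hat\alpha-\lambda)}\,d\sigma<\infty$ by Lemma~\ref{lem:Bounds_on_F_and_G} and \eqref{eq:a_priori_estimate}, and $cp'<1$ keeps every H\"older factor finite. Letting $c\downarrow(\hat\alpha+\lambda-\frac1p)^+$ yields exactly $\delta<\min\{1-\hat\alpha-\lambda-\frac12(\mu+\alpha)^+,\,1-\frac1p-\frac12(\mu+\alpha)^+\}$ in $\dot{\mathbb H}^\mu$ on $[0,T]$, and $\delta<1-\frac1p-\frac12(\nu+\alpha)^+$ in $\dot{\mathbb H}^\nu$ on $[a,T]$.

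The same caveat governs the unspecified ``loss'' in your weighted $R_\theta$. The weight $s^{\kappa}$ should only make $s^{\kappa}Y_\theta\in L^p(\Omega\times(0,T))$, not make $\sup_s s^{\kappa}\|Y_\theta(s)\|_{L^p(\Omega)}$ finite. Otherwise you again fall $\frac1p$ short of $\frac12-\hat\beta-\lambda-\frac12(\mu+\beta)^+$.

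After this repair your route genuinely differs from the paper's. The paper uses no weights at all. For (i) and (iii) it factorizes both convolutions from $t_0=0$, measures $F_\theta$ and $G_\theta$ in $\dot{\mathbb H}^{-\alpha}$ and $\dot{\mathbb H}^{-\beta}$, and simply restricts $\theta<\min\{1,1+\frac1p-\hat\alpha-\lambda\}$, respectively $\theta<\min\{\frac12,\frac12+\frac1p-\hat\beta-\lambda\}$. This makes $F_\theta,G_\theta\in L^p(\Omega;L^p(0,T))$, so the unweighted Lemma~\ref{lem:det_smoothing} applies, and the cap on $\theta$ is what produces the terms containing $\hat\alpha+\lambda$ and $\hat\beta+\lambda$. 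For (ii) it uses the shifted identity $X(t)=S(t-\frac a2)X(\frac a2)+(\mathcal J_F^{a/2}X)(t)+(\mathcal J_G^{a/2}X)(t)$, so the singularity at $0$ never enters and $\theta$ is free.

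In your version of (ii) the stochastic convolution starts at $0$ and $Y_\theta$ is measured in $\dot{\mathbb H}^\nu$. Working unweighted would require $\frac1p<\theta<\frac12+\frac1p-\frac12(\nu+\beta)^+-\hat\beta-\lambda$. Assumption~\ref{ass:parameters} does not guarantee this interval is nonempty: take $\mu=\alpha=\beta=\hat\alpha=0$, $\nu=0.4$, $\hat\beta=0.25$, $p=4$. Even when it is nonempty, it caps $\delta$ below the range in (ii) whenever $\hat\beta+\lambda>\frac1p$.

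So the weighted $R_\theta$ estimate, including the increment terms that integrate over $(0,s)$ against $s^{-\kappa}$, is a real lemma you still have to prove. The shift to $a/2$ is what lets the paper get by with a single unweighted lemma. Your direct treatment of the drift, in exchange, needs no factorization there.
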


Next, we extend our results to accommodate measurable initial data $X_0\in L^0(\Omega, \mathcal{F}_0; \dot{\mathbb{H}}^{\mu})$.

\begin{theorem}
\label{thm:general_initial_value}
  Suppose that Assumption~\ref{ass:operator_A} holds, and that Assumptions~\ref{ass:measurability_drift_diff} and~\ref{ass:global_lips_nonlin} hold for a set of parameters satisfying Assumption~\ref{ass:parameters}. Let $X_0\in L^0(\Omega, \mathcal{F}_0; \dot{\mathbb{H}}^{\mu})$. Then \eqref{eq:SEE} admits a unique mild solution $X$ whose sample paths satisfy the following regularity results \(\mathbb{P}\)-almost surely: For any \(0<a<T\) and \(0<\delta<\min\big\{1-\frac{1}{2}(\nu+\alpha)^+, \frac{1}{2}-\frac{1}{2}(\nu+\beta)^+\big\}\),
\begin{align}
    \label{eq:general_initial_value_1}&X \in  \mathcal{C}\big([0,T]; \dot{\mathbb{H}}^{\mu}\big) \cap \mathring{\mathcal{C}}^{0,\delta}\big([a,T]; \dot{\mathbb{H}}^{\nu}\big) \;\text{ with  }\;X(0)=X_0,\;\text{ and } \\
    \label{eq:general_initial_value_2}&\lim\limits_{t\searrow  0}t^\lambda\|X(t)\|_{\dot{\mathbb{H}}^\nu}=0\;\text{ for } \lambda:=(\nu-\mu)/2>0.
\end{align}
\end{theorem}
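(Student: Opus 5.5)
The approach is the standard localization in $\Omega$ via $\mathcal{F}_0$-measurable truncation of the initial datum. For $n\in\mathbb{N}$ set $\Omega_n\coloneqq\{\|X_0\|_{\dot{\mathbb{H}}^{\mu}}\le n\}\in\mathcal{F}_0$ and $X_0^n\coloneqq \mathds{1}_{\Omega_n}X_0$. Then $X_0^n\in L^p(\Omega,\mathcal{F}_0;\dot{\mathbb{H}}^\mu)$ for every $p$. Theorem~\ref{thm:main_global_existence} gives a unique mild solution $X^n\in\mathbb{V}_p$.

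\textbf{Consistency on the sets $\Omega_n$.} For $m\ge n$, I will show that $X^m=X^n$ a.s.\ on $\Omega_n$ for all $t$. Because $\mathds{1}_{\Omega_n}$ is $\mathcal{F}_0$-measurable, it can be pulled inside both the Bochner integral and the stochastic integral. Hence $Z\coloneqq \mathds{1}_{\Omega_n}(X^m-X^n)$ satisfies
\[
Z(t)=\int_0^t S(t-\sigma)\mathds{1}_{\Omega_n}\big(F(\sigma,X^m)-F(\sigma,X^n)\big)\,d\sigma+\int_0^t S(t-\sigma)\mathds{1}_{\Omega_n}\big(G(\sigma,X^m)-G(\sigma,X^n)\big)\,dW(\sigma).
\]
The Lipschitz bounds give $\|\mathds{1}_{\Omega_n}(F(\sigma,X^m)-F(\sigma,X^n))\|\le L_1\sigma^{-\hat\alpha}\|Z(\sigma)\|_{\dot{\mathbb{H}}^\nu}$, and similarly for $G$. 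The same contraction and Gronwall estimate in the $\mathbb{V}_p$-norm that yields \eqref{eq:cont_dep_ini_data}, here with zero initial datum, then forces $\|Z\|_{\mathbb{V}_p}=0$.

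\textbf{Construction of the solution.} Define $X\coloneqq X^n$ on $\Omega_n$. Since $\Omega_n\uparrow\Omega$ up to a null set, $X$ is well defined and predictable, and it satisfies \eqref{eq:mild_solution_formula} on each $\Omega_n$, hence a.s. For uniqueness, take any other mild solution $Y$ and compare $\mathds{1}_{\Omega_n}(Y-X^n)$ in the same way. This requires an additional localization by the stopping time $\tau_R\coloneqq\inf\{t:\ t^\lambda\|Y(t)\|_{\dot{\mathbb{H}}^\nu}+\|Y(t)\|_{\dot{\mathbb{H}}^\mu}>R\}$, so that the moments are finite. One then lets $R\to\infty$. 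I expect this step to be the delicate one: the time weight is needed to handle $\|Y\|_{\dot{\mathbb{H}}^\nu}$ near $0$, and for a general mild solution that weighted quantity is not a priori finite.

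\textbf{Pathwise regularity.} Choose $p>2$ large enough that the conditions of Theorem~\ref{thm:pathwise_regularity}(i)--(ii) hold with the given $\delta$. This is possible because the constraints there tend, as $p\to\infty$, to the strict inequalities $\delta<1-\frac12(\nu+\alpha)^+$ and $\delta<\frac12-\frac12(\nu+\beta)^+$. The continuity conditions follow from Assumption~\ref{ass:parameters} and $\mu\le\nu$. Then $X^n\in\mathcal{C}([0,T];\dot{\mathbb{H}}^\mu)\cap\mathring{\mathcal{C}}^{0,\delta}([a,T];\dot{\mathbb{H}}^\nu)$ a.s., and this transfers to $X$ on each $\Omega_n$. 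The identity $X(0)=X_0$ is immediate.

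\textbf{The limit \eqref{eq:general_initial_value_2}.} This is the main obstacle, since $\mathbb{V}_p$ only controls $t^\lambda X$ in an $L^p$ sup sense, not almost surely. I would show $\lim_{t\searrow0}t^\lambda\|X^n(t)\|_{\dot{\mathbb{H}}^\nu}=0$ a.s.\ term by term.

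For the deterministic part, $t^\lambda\|S(t)x\|_{\dot{\mathbb{H}}^\nu}\to0$ for each fixed $x\in\dot{\mathbb{H}}^\mu$. This follows from a density argument: the bound $\|A^{\lambda}S(t)\|\le Ct^{-\lambda}$ holds uniformly, and the limit is $0$ on the dense subspace $\dot{\mathbb{H}}^\nu$. It therefore holds pathwise with $x=X_0(\omega)$.

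For the convolutions, I would use the factorization method with weights. The aim is to show that $t^\lambda\|\cdot\|_{\dot{\mathbb{H}}^\nu}$ of each convolution is a.s.\ bounded by $Ct^{\kappa}$ times an integrable random variable, for some $\kappa>0$. This exploits the strict slack in Assumption~\ref{ass:parameters} (e.g.\ $\hat\alpha<1-\frac12\max\{\nu-\mu,\alpha+\nu\}$), together with Kolmogorov/Garsia--Rodemich--Rumsey estimates on the weighted process.
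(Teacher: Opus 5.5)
Your route is the paper's: truncate the initial datum on the $\mathcal{F}_0$-sets $\Omega_n$, solve in $\mathbb{V}_p$ via Theorem~\ref{thm:main_global_existence}, take $p$ large in Theorem~\ref{thm:pathwise_regularity}, patch the solutions together, and prove $t^\lambda\|X(t)\|_{\dot{\mathbb{H}}^\nu}\to0$ by density for $S(t)X_0$ and a weighted factorization for the convolutions.

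\textbf{Consistency step.} Here you argue differently from the paper, and your version is valid and simpler. You apply the $\mathbb{V}_p$-contraction directly to $Z=\mathbf{1}_{\Omega_n}(X^m-X^n)$. Both processes lie in $\mathbb{V}_p$, so $\|Z\|_{\mathbb{V}_p,\gamma}<\infty$, and the contraction gives $\|Z\|_{\mathbb{V}_p,\gamma}\le\frac12\|Z\|_{\mathbb{V}_p,\gamma}$. The paper instead routes this step through Lemma~\ref{lem:local_uniqueness_gen_ini_data}, which needs stopping times and the generalized Gronwall Lemma~\ref{lem:gtype}. One ordering point: consistency holds only for each $t$ almost surely. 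Either patch after choosing the continuous versions from the regularity step, so you get indistinguishability, or set $X=\sum_n\mathbf{1}_{\Omega_n\setminus\Omega_{n-1}}X^n$.

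\textbf{Limit at $0$.} No Kolmogorov or Garsia--Rodemich--Rumsey argument is needed. Write $\mathcal{J}_FX^n(t)=c_\theta\int_0^t(t-\sigma)^{\theta-1}\sigma^{-\lambda}S(t-\sigma)\widetilde F_\theta(\sigma)\,d\sigma$ with $\widetilde F_\theta(\sigma)=\sigma^\lambda F_\theta(\sigma)$. Then $\widetilde F_\theta\in L^p(\Omega;L^p(0,T;\dot{\mathbb{H}}^{-\alpha}))$ for $\theta<1-\hat\alpha+\frac1p$. H\"older's inequality then gives pathwise $t^\lambda\|\mathcal{J}_FX^n(t)\|_{\dot{\mathbb{H}}^\nu}\le C\,t^{\theta-\frac1p-\frac12(\nu+\alpha)^+}\|\widetilde F_\theta(\cdot,\omega)\|_{L^p(0,T;\dot{\mathbb{H}}^{-\alpha})}$. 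An admissible $\theta$ exists exactly because $\hat\alpha<1-\frac12(\nu+\alpha)^+$. The $G$-term is analogous, using $\hat\beta<\frac12-\frac12(\nu+\beta)^+$. This is the paper's argument, and it is what your ``$Ct^\kappa$ times an integrable variable'' amounts to.

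\textbf{The gap: uniqueness.} The step you leave open is a genuine gap as written. Take an arbitrary mild solution $Y$: predictable, $\dot{\mathbb{H}}^\mu$-valued, with only the integrability in Definition~\ref{def:mild_solution}. Nothing tells you that $t\mapsto\|Y(t)\|_{\dot{\mathbb{H}}^\mu}+t^\lambda\|Y(t)\|_{\dot{\mathbb{H}}^\nu}$ is continuous, or even almost surely bounded. So your $\tau_R$ need not be a stopping time, and need not exhaust $[0,T]$ as $R\to\infty$. Moreover, $F(\sigma,Y(\sigma))$ and $G(\sigma,Y(\sigma))$ are controlled only when $Y(\sigma)\in\dot{\mathbb{H}}^\nu$.

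The paper does not prove uniqueness in that generality either. Its uniqueness, via Lemma~\ref{lem:local_uniqueness_gen_ini_data}, holds within the class of mild solutions having the pathwise regularity \eqref{eq:general_initial_value_1}--\eqref{eq:general_initial_value_2}. State this restriction explicitly; your argument then closes as follows.
\begin{itemize}
\item $t\mapsto\|Y(t)\|_{\lambda,t}$ is continuous and adapted on $[0,T]$, so $\tau_R$ is a stopping time and pathwise $\tau_R=T$ for $R$ large.
\item $\xi=\mathbf{1}_{\Omega_n}\mathbf{1}_{\llbracket0,\tau_R\rrbracket}$ can be moved inside both convolutions.
\item $\xi(Y-X^n)$ has finite weighted sup-norm, since $\xi Y$ is bounded by $R$ in $\|\cdot\|_{\lambda,t}$ and $X^n\in\mathbb{V}_p$.
\item The same contraction forces $\xi(Y-X^n)=0$.
\item Let $R\to\infty$, then $n\to\infty$.
\end{itemize}
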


\subsection{Locally Lipschitz nonlinearities}\label{subsec:Locally Lipschitz nonlinearities}

Assumption~\ref{ass:global_lips_nonlin} is too restrictive for equations with polynomial or convective nonlinearities (e.g., the $u-u^3$ term in the Allen--Cahn equation and the $u \partial_x u$ term in the Burgers equation). To accommodate a broader class of physical models, we extend our analysis to locally Lipschitz coefficients. Specifically, we formulate our local Lipschitz assumption on the drift and diffusion terms with respect to a time-weighted norm 
\begin{equation}
    \|u\|_{\lambda,t} \coloneqq \begin{cases}
        \|u\|_{\dot{\mathbb{H}}^{\mu}} &: \text{ if }\; t=0\;\text{ or }\;\lambda=0, \\[4pt]
        \|u\|_{\dot{\mathbb{H}}^{\mu}} + t^{\lambda} \|u\|_{\dot{\mathbb{H}}^{\nu}} &:\text{ if }\; 0<t\leq T\;\text{ and }\;\lambda>0
\end{cases}
\label{eq:time_weighted_norm}
\end{equation}
on \(\dot{\mathbb{H}}^{\nu}\), where \(\lambda := (\nu-\mu)/2\).

\begin{assumption}[Local Lipschitz  conditions]
\label{ass:local_lip_stoch}
Under the Assumption~\ref{ass:parameters}, for each $R > 0$, there exist positive constants $L_j(R),\;j=1,2,3,4,$ such that for $t \in (0,T]$, \(F\) and \(G\) satisfy the following estimates \(\mathbb{P}\)-almost surely
\begin{align*}
    &\|F(t,u) - F(t,v)\|_{\dot{\mathbb{H}}^{-\alpha}} \le L_1(R)\, t^{-\hat{\alpha}} \|u - v\|_{\lambda,t}, \quad \|F(t,0)\|_{\dot{\mathbb{H}}^{-\alpha}} \le L_2(R)\, t^{-\hat{\alpha}}, \\
    &\|G(t,u) - G(t,v)\|_{\mathrm{HS}(U_0,\dot{\mathbb{H}}^{-\beta})} \le L_3(R)\, t^{-\hat{\beta}} \|u - v\|_{\lambda,t}, \quad \|G(t,0)\|_{\mathrm{HS}(U_0,\dot{\mathbb{H}}^{-\beta})} \le L_4(R)\, t^{-\hat{\beta}},
\end{align*}
for all \(u,v \in\{\phi\in\dot{\mathbb{H}}^{\nu}:\,\|\phi\|_{\lambda, t}\leq R\}\).
\end{assumption}
\begin{definition}[Local mild solution]\label{def:local_mild_solution} Assume that \(X_0\in L^0(\Omega,\mathcal F_0;\dot{\mathbb H}^{\mu})\) and that \(F\) and \(G\) satisfy the measurability conditions of Assumption~\ref{ass:measurability_drift_diff}. A pair $(X, \tau)$ is defined as a local mild solution to \eqref{eq:SEE} if $\tau: \Omega \to [0, T]$ is a stopping time satisfying $\tau>0\;\mathbb{P}$-almost surely, and there exists a localizing sequence of stopping times \(\tau_n\uparrow \tau\;\mathbb{P}\)-almost surely such that for each $n \in \mathbb{N}$, the stopped process $\{X(t \wedge \tau_n)\}_{t \in [0, T]}$ is an $\dot{\mathbb{H}}^{\mu}$-valued predictable stochastic process, and for $t \in [0, T]$, the following hold \(\mathbb{P}\)-almost surely: 
\begin{enumerate}[label={\upshape(\roman*)}]
    \item \(\sigma\mapsto S(t-\sigma)F(\sigma,X(\sigma))\mathbf{1}_{\llbracket 0,\tau_{n}\rrbracket}(\sigma)\in L^1(0,t;\dot{\mathbb{H}}^\mu)\),
    \item \(\sigma\mapsto S(t-\sigma)G(\sigma,X(\sigma))\mathbf{1}_{\llbracket 0,\tau_{n}\rrbracket}(\sigma)\in L^2(0,t;\mathrm{HS}(U_0, \dot{\mathbb{H}}^{\mu}))\),
    \item on \(\{t\leq\tau_n\}\), \(X\) satisfies the following equality:
\begin{equation}\label{eq:local_mild_solution_formula}
X(t) = S(t)X_0 + \int_0^{t} S(t -\sigma)F(\sigma, X(\sigma)) \, d\sigma + \int_0^{t} S(t -\sigma)G(\sigma, X(\sigma)) \, dW(\sigma)\;\text{in }\;\dot{\mathbb{H}}^{\mu}.
\end{equation}
\end{enumerate}

A local solution \((X,\tau)\) is called maximal if for all other local solution \((\widetilde{X},\widetilde{\tau})\), it holds that \(\widetilde{\tau}\leq\tau\;\mathbb{P}\)-almost surely.
\end{definition}
\begin{remark}
\begin{enumerate} [label={\upshape(\roman*)}]
    \item For the stochastic interval \(\llbracket 0,\tau_{n}\rrbracket\coloneqq \{(t,\omega)\in[0,T]\times\Omega:\,0\leq t \leq \tau_n(\omega)\}\), the process \(\big(\mathbf{1}_{\llbracket 0,\tau_{n}\rrbracket}(t)\big)_{t\in[0,T]}\) defined by \[\mathbf{1}_{\llbracket 0,\tau_{n}\rrbracket}(t,\omega)\coloneqq \begin{cases}
        1&:\,0\leq t\leq \tau_n(\omega),\\
        0&:\,\text{otherwise},
    \end{cases}\] is predictable. 
    \item One may be tempted to replace condition (iii) in Definition~\ref{def:local_mild_solution} of local mild solution with a more compact expression evaluated at the stopped time, a formulation occasionally encountered in the literature (\cite[Definition~1.2]{MR1213834}, \cite[Definition~4.7]{MR1488138}):
    $$X(t\wedge\tau_n) = S(t\wedge\tau_n)X_0 + \int_0^{t\wedge\tau_n} S(t\wedge\tau_n -\sigma)F(\sigma, X(\sigma)) \, d\sigma + \int_0^{t\wedge\tau_n} S(t\wedge\tau_n -\sigma)G(\sigma, X(\sigma)) \, dW(\sigma).$$ 
    However, as pointed out in \cite[Appendix~A]{MR2136869} and \cite[Definition~7.1]{MR2602928}, this formulation is technically flawed. The integrand of the stochastic convolution involves the term $S(t\wedge\tau_n - \sigma)$. Because the realization of the stopping time $\tau_n$ is generally not determined by the filtration up to time $\sigma$, the mapping $\sigma \mapsto S(t\wedge\tau_n - \sigma)x$ (for $x \neq 0$) is not $\mathcal{F}_\sigma$-adapted. Consequently, the stochastic integral anticipates the future and is ill-defined in the It\^o sense.
    
    To rigorously circumvent this lack of predictability, one must decouple the stopping time from the deterministic semigroup. An equivalent and mathematically well-posed formulation, which holds $\mathbb{P}$-almost surely, relies on indicator functions over the stochastic interval $\llbracket 0,\tau_{n}\rrbracket$:
    \begin{align}
        \nonumber \mathbf{1}_{\llbracket 0,\tau_{n}\rrbracket}(t) X(t) = &\mathbf{1}_{\llbracket 0,\tau_{n}\rrbracket}(t) \Bigg( S(t)X_0 + \int_0^{t} \mathbf{1}_{\llbracket 0,\tau_{n}\rrbracket}(\sigma)S(t -\sigma)F(\sigma, X(\sigma)) \, d\sigma\\
        &+ \int_0^{t} \mathbf{1}_{\llbracket 0,\tau_{n}\rrbracket}(\sigma)S(t -\sigma)G(\sigma, X(\sigma)) \, dW(\sigma) \Bigg).
    \end{align}
    Depending on algebraic convenience, condition (iii) in Definition~\ref{def:local_mild_solution} may be freely replaced with this global indicator formulation without any loss of generality.
\end{enumerate}
\end{remark}

\begin{theorem}[Local existence, uniqueness, and blow-up alternative]
\label{thm:local_existence}
Suppose that Assumption~\ref{ass:operator_A} holds, and that Assumptions~\ref{ass:measurability_drift_diff} and~\ref{ass:local_lip_stoch} hold for a set of parameters satisfying Assumption~\ref{ass:parameters}. Let $X_0\in L^0(\Omega, \mathcal{F}_0; \dot{\mathbb{H}}^{\mu})$. Then, \eqref{eq:SEE} admits unique maximal local mild solution $(X, \tau_{\max})$ with explosion time \(\tau_{\max}\) possessing a localizing sequence of stopping times \(\tau_n\uparrow \tau_{\max}\;\mathbb{P}\)-almost surely such that for each $n \in \mathbb{N}$, the stopped process $\{X(t \wedge \tau_n)\}_{t \in [0, T]}$ is an $\dot{\mathbb{H}}^{\mu}$-valued predictable stochastic process satisfying the following pathwise regularity $\mathbb{P}$-almost surely: for any \(0<a<T\) and \(0<\delta<\min\big\{1-\frac{1}{2}(\nu+\alpha)^+, \frac{1}{2}-\frac{1}{2}(\nu+\beta)^+\big\}\),
\begin{align}
\label{eq:local_existence_1}&X(\cdot\wedge\tau_n) \in \mathcal{C}\big([0,T]; \dot{\mathbb{H}}^{\mu}\big) \cap \mathring{\mathcal{C}}^{0,\delta}\big([a,T]; \dot{\mathbb{H}}^{\nu}\big)\text{ with }X(0)=X_0,\text{ and } \\
\label{eq:local_existence_2}& \lim\limits_{t\searrow  0}\|X(t)\|_{\lambda,t}=\|X_0\|_{\dot{\mathbb{H}}^\mu} \text{ on }\{\tau_n>0\},
\end{align}
where the time-weighted norm \(\|\cdot\|_{\lambda,t}\) is given in (\ref{eq:time_weighted_norm}). The solution $(X, \tau_{\max})$ is the unique maximal local mild solution to \eqref{eq:SEE} in the sense that for any other local mild solution $(\widetilde{X}, \widetilde{\tau})$ with identical pathwise regularity, one has $\widetilde{\tau} \leq \tau_{\max}$ $\mathbb{P}$-a.s., and $\|X(t) - \widetilde{X}(t)\|_{\lambda,t} = 0$ $\mathbb{P}$-a.s. on $\{t < \tau_{\max} \wedge \widetilde{\tau}\}$. Furthermore, the explosion time $\tau_{\max}$ satisfies $\limsup_{t \uparrow \tau_{\max}} \|X(t)\|_{\lambda,t} = +\infty$ $\mathbb{P}$-a.s. on $\{\tau_{\max} < T\}$.
\end{theorem}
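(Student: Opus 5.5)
The plan is the classical truncation-and-patching argument of Seidler, carried out with the time-weighted norm $\|\cdot\|_{\lambda,t}$ in place of the plain $\dot{\mathbb{H}}^{\nu}$-norm, so that Theorems~\ref{thm:main_global_existence}--\ref{thm:general_initial_value} do the analytic work. For each $R\in\mathbb{N}$ we introduce the radial retraction $\pi_{R,t}(u)\coloneqq u\,\min\{1,R/\|u\|_{\lambda,t}\}$ on $\dot{\mathbb{H}}^{\nu}$ and set
\[
F_R(t,u)\coloneqq F(t,\pi_{R,t}(u)),\qquad G_R(t,u)\coloneqq G(t,\pi_{R,t}(u)).
\]
Since $\pi_{R,t}$ is $2$-Lipschitz for $\|\cdot\|_{\lambda,t}$ and maps into the ball of radius $R$, Assumption~\ref{ass:local_lip_stoch} gives
\[
\|F_R(t,u)-F_R(t,v)\|_{\dot{\mathbb{H}}^{-\alpha}}\le 2L_1(R)\,t^{-\hat\alpha}\|u-v\|_{\lambda,t}\le 2L_1(R)\,t^{-\hat\alpha}\bigl(\|u-v\|_{\dot{\mathbb{H}}^{\mu}}+t^{\lambda}\|u-v\|_{\dot{\mathbb{H}}^{\nu}}\bigr),
\]
and similarly for $G_R$, while $F_R(t,0)=F(t,0)$. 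This is not literally Assumption~\ref{ass:global_lips_nonlin}, which uses $\|u-v\|_{\dot{\mathbb{H}}^{\nu}}$. However, $\|u-v\|_{\dot{\mathbb{H}}^{\mu}}\le C\|u-v\|_{\dot{\mathbb{H}}^{\nu}}$ and $t^{\lambda}\le T^{\lambda}$, so it implies the global condition with the same exponents $\hat\alpha,\hat\beta$. Measurability (Assumption~\ref{ass:measurability_drift_diff}) is preserved because $\pi_{R,t}$ is jointly Borel; we extend $F_R$ to $\dot{\mathbb{H}}^{\mu}\setminus\dot{\mathbb{H}}^{\nu}$ by $F(t,0)$, which is harmless since $\pi_{R,t}$ is only evaluated where the solution lives. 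Theorem~\ref{thm:general_initial_value} then yields, for every $X_0\in L^0(\Omega,\mathcal F_0;\dot{\mathbb{H}}^{\mu})$, a unique mild solution $X_R$ of the truncated problem with the pathwise regularity \eqref{eq:general_initial_value_1}--\eqref{eq:general_initial_value_2}. To keep initial values inside the truncation, we first localize the data: on $\Omega_k\coloneqq\{\|X_0\|_{\dot{\mathbb{H}}^{\mu}}\le k\}\in\mathcal F_0$ we solve with $X_0\mathbf 1_{\Omega_k}$ and glue over $k$.

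Next we define stopping times $\tau_R\coloneqq\inf\{t\in(0,T]:\|X_R(t)\|_{\lambda,t}\ge R\}\wedge T$. Because $t\mapsto\|X_R(t)\|_{\lambda,t}$ is continuous on $(0,T]$ and tends to $\|X_0\|_{\dot{\mathbb{H}}^{\mu}}$ as $t\searrow0$ (using \eqref{eq:general_initial_value_2}), $\tau_R$ is a stopping time and $\tau_R>0$ on $\{\|X_0\|_{\dot{\mathbb{H}}^{\mu}}<R\}$. The key consistency step is that $X_R=X_{R'}$ on $\llbracket0,\tau_R\rrbracket$ for $R<R'$, so that $\tau_R\le\tau_{R'}$. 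We prove it with the indicator formulation from the Remark after Definition~\ref{def:local_mild_solution}. On $\llbracket0,\tau_R\rrbracket$ both $F_R$ and $F_{R'}$ coincide with $F$ along the process, so the difference $D\coloneqq\mathbf 1_{\llbracket0,\tau_R\rrbracket}(X_R-X_{R'})$ satisfies a linear convolution inequality. The $\mathbb{V}_p$-contraction estimates from the proof of Theorem~\ref{thm:main_global_existence}, on a short interval or with an exponentially weighted norm, together with a Gronwall or iteration argument, then give $D=0$. Since $\mathbb{V}_p$ needs moments, we first localize by an extra $\mathcal F_0$-set and stopping time on which all quantities are bounded.

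With consistency in hand, we set $\tau_{\max}\coloneqq\lim_R\tau_R$ and $X(t)\coloneqq X_R(t)$ on $\{t\le\tau_R\}$, and take $\tau_n\coloneqq\tau_n$ (possibly slightly reduced to be strictly below $\tau_{\max}$ when it is announced) as the localizing sequence. The regularity \eqref{eq:local_existence_1}--\eqref{eq:local_existence_2} is inherited from $X_n$, since $X(\cdot\wedge\tau_n)=X_n(\cdot\wedge\tau_n)$.

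\emph{Uniqueness and maximality.} Given another local solution $(\widetilde X,\widetilde\tau)$ with the same regularity, let $\sigma_R\coloneqq\inf\{t:\|\widetilde X(t)\|_{\lambda,t}\ge R\}$. Up to $\sigma_R\wedge\widetilde\tau_n$, $\widetilde X$ solves the $R$-truncated equation, so the same consistency argument gives $\widetilde X=X_R$ there. Hence $\widetilde X=X$ on $\{t<\tau_{\max}\wedge\widetilde\tau\}$ and $\sigma_R\wedge\widetilde\tau_n\le\tau_R$ on the event where $\widetilde X$ has norm $<R$. Letting $R\to\infty$ and then $n\to\infty$ gives $\widetilde\tau\le\tau_{\max}$. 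Maximality also uses the blow-up alternative: on $\{\tau_{\max}<T\}$ we have $\tau_R<T$ for all $R$, and by continuity $\|X(\tau_R)\|_{\lambda,\tau_R}=R\to\infty$, so $\limsup_{t\uparrow\tau_{\max}}\|X(t)\|_{\lambda,t}=\infty$.

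\emph{Main obstacle.} I expect the hardest step to be the consistency/uniqueness estimate. The stopped difference must be estimated in the weighted $\mathbb{V}_p$-type norm without anticipating integrands, so everything must be written with $\mathbf 1_{\llbracket0,\tau\rrbracket}(\sigma)$ inside the integrals. The singular weights $t^{-\hat\alpha},t^{-\hat\beta}$ combined with $t^{\lambda}$ must also remain integrable, and the conditions of Assumption~\ref{ass:parameters} are exactly what make this work. A secondary subtlety is continuity of $\|X(t)\|_{\lambda,t}$ at $t=0$, which is what guarantees $\tau_R>0$; this is where \eqref{eq:general_initial_value_2} is essential.
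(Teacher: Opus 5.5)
Your proposal is correct and follows essentially the same route as the paper. Both arguments truncate with the radial retraction in $\|\cdot\|_{\lambda,t}$ and obtain global solutions of the truncated problems from Theorem~\ref{thm:general_initial_value}. Both then define hitting times $\tau_N$ and prove consistency of the truncated solutions up to $\tau_N$ by a localized Gronwall-type estimate; the paper applies Lemma~\ref{lem:gtype} to $\mathbb{E}[t^{2\lambda}\|\cdot\|_{\dot{\mathbb{H}}^\nu}^2]$, where you use the exponentially weighted $\mathbb{V}_p$ contraction. Both patch the solutions together and read off blow-up from $\|X(\tau_N)\|_{\lambda,\tau_N}=N$.

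The only real variation is in maximality. You compare the competitor directly with the truncated solutions to get $\widetilde\tau_n\le\tau_R$ for large $R$, whereas the paper argues by contradiction with the blow-up alternative. Your version is slightly cleaner, since it does not need to assume $\widetilde\tau>\tau_{\max}$ almost surely.
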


\begin{remark}\label{rem:seidler_local}
Alongside the global well-posedness result discussed in Remark~\ref{rem:global_case_comparison}(iii), our framework also generalizes the maximal local existence results of Seidler~\cite[Theorem 1.5]{MR1213834}. Seidler's theorem restricts initial data to the fractional space $H_\zeta$ for $\zeta \ge \delta$ (corresponding to the restriction $\mu \ge \nu$). We overcome this limitation by formulating the locally Lipschitz conditions and stopping times with respect to the time-weighted norm $\|\cdot\|_{\lambda,t}$. This decoupling permits initial data with regularity $\mu < \nu$, which is essential for accommodating equations with locally Lipschitz coefficients subject to rough initial data.
\end{remark}
\section{Proof of main results}\label{sec:proof_of_main_results}
To streamline the exposition and avoid repetitive qualifications regarding uniqueness, we adopt the following convention: two $\mathcal{X}$-valued stochastic processes $\{X(t)\}_{t\in[0,T]}$ and $\{Y(t)\}_{t\in[0,T]}$ are identified whenever they are versions (modifications) of each other. That is,$$\|X(t)-Y(t)\|_{\mathcal{X}}=0 \quad \mathbb{P}\text{-a.s.} \quad \text{for all } t\in[0,T].
$$

Before proceeding to the proofs of our main theorems, we establish two essential auxiliary lemmas. The first lemma provides bounds for the nonlinear operators $F$ and $G$.

\begin{lemma}\label{lem:Bounds_on_F_and_G}
Suppose that Assumptions~\ref{ass:measurability_drift_diff} and~\ref{ass:global_lips_nonlin} hold. Let $p \ge 2$ and $\lambda = (\nu-\mu)/2$. Then, for all $\varphi \in \mathbb{V}_p$ and $\sigma \in (0,T]$, the nonlinear operators $F$ and $G$ satisfy the following bounds:
\begin{align*}
    \|F(\sigma,\varphi(\sigma))\|_{L^p(\Omega;\dot{\mathbb{H}}^{-\alpha})}
    &\le \max\{L_1, L_2 T^\lambda\} \sigma^{-(\hat{\alpha}+\lambda)} \big(1+\|\varphi\|_{\mathbb{V}_p}\big), \\
    \|G(\sigma,\varphi(\sigma))\|_{L^p(\Omega;\mathrm{HS}(U_0,\dot{\mathbb{H}}^{-\beta}))}
    &\le \max\{L_3, L_4 T^\lambda\} \sigma^{-(\hat{\beta}+\lambda)} \big(1+\|\varphi\|_{\mathbb{V}_p}\big).
\end{align*}
\end{lemma}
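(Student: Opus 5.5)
The bound follows from the global Lipschitz conditions in Assumption~\ref{ass:global_lips_nonlin} combined with the definition of the $\mathbb{V}_p$-norm. We treat $F$ in detail; the argument for $G$ is identical with $(L_1,L_2,\hat{\alpha},\dot{\mathbb{H}}^{-\alpha})$ replaced by $(L_3,L_4,\hat{\beta},\mathrm{HS}(U_0,\dot{\mathbb{H}}^{-\beta}))$.

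Fix $\sigma\in(0,T]$ and $\varphi\in\mathbb{V}_p$. By definition of $\mathbb{V}_p$, we have $\sigma^\lambda\|\varphi(\sigma)\|_{L^p(\Omega;\dot{\mathbb{H}}^\nu)}<\infty$, so $\varphi(\sigma)\in\dot{\mathbb{H}}^\nu$ almost surely. Hence the Lipschitz estimate may be applied pointwise in $\omega$ with $u=\varphi(\sigma)$ and $v=0$. The triangle inequality gives, $\mathbb{P}$-a.s.,
\[
\|F(\sigma,\varphi(\sigma))\|_{\dot{\mathbb{H}}^{-\alpha}}\le \|F(\sigma,\varphi(\sigma))-F(\sigma,0)\|_{\dot{\mathbb{H}}^{-\alpha}}+\|F(\sigma,0)\|_{\dot{\mathbb{H}}^{-\alpha}}\le L_1\sigma^{-\hat{\alpha}}\|\varphi(\sigma)\|_{\dot{\mathbb{H}}^\nu}+L_2\sigma^{-\hat{\alpha}}.
\]
Strong measurability of $\omega\mapsto F(\sigma,\omega,\varphi(\sigma,\omega))$ follows from Assumption~\ref{ass:measurability_drift_diff} applied to a predictable version of $\varphi$. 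Taking $L^p(\Omega)$-norms and using Minkowski's inequality then yields
\[
\|F(\sigma,\varphi(\sigma))\|_{L^p(\Omega;\dot{\mathbb{H}}^{-\alpha})}\le L_1\sigma^{-\hat{\alpha}}\|\varphi(\sigma)\|_{L^p(\Omega;\dot{\mathbb{H}}^\nu)}+L_2\sigma^{-\hat{\alpha}}.
\]

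The only real point is to extract the common factor $\sigma^{-(\hat{\alpha}+\lambda)}$. For the first term, we write
\[
L_1\sigma^{-\hat{\alpha}}\|\varphi(\sigma)\|_{L^p(\Omega;\dot{\mathbb{H}}^\nu)}=L_1\sigma^{-(\hat{\alpha}+\lambda)}\,\sigma^{\lambda}\|\varphi(\sigma)\|_{L^p(\Omega;\dot{\mathbb{H}}^\nu)}\le L_1\sigma^{-(\hat{\alpha}+\lambda)}\|\varphi\|_{\mathbb{V}_p}.
\]
For the second term, since $\lambda\ge0$ and $\sigma\le T$, we have $\sigma^\lambda\le T^\lambda$, and therefore
\[
L_2\sigma^{-\hat{\alpha}}=L_2\sigma^{\lambda}\sigma^{-(\hat{\alpha}+\lambda)}\le L_2T^\lambda\sigma^{-(\hat{\alpha}+\lambda)}.
\]
Adding the two bounds and dominating $L_1$ and $L_2T^\lambda$ by their maximum gives $\max\{L_1,L_2T^\lambda\}\,\sigma^{-(\hat{\alpha}+\lambda)}(1+\|\varphi\|_{\mathbb{V}_p})$, which is the claimed estimate for $F$.

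The $G$ case repeats these steps verbatim. The only step requiring any care is the measurability and a.s.\ membership of $\varphi(\sigma)$ in $\dot{\mathbb{H}}^\nu$, which is needed to apply the Lipschitz bounds pointwise; both are guaranteed by the definition of $\mathbb{V}_p$ together with Assumption~\ref{ass:measurability_drift_diff}.
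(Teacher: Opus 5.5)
Your proposal is correct and follows essentially the same argument as the paper. Both apply the Lipschitz bound with $v=0$ and the triangle inequality, take $L^p(\Omega)$-norms, use $\sigma^\lambda\|\varphi(\sigma)\|_{L^p(\Omega;\dot{\mathbb{H}}^\nu)}\le\|\varphi\|_{\mathbb{V}_p}$, write $\sigma^{-\hat{\alpha}}=\sigma^{\lambda}\sigma^{-(\hat{\alpha}+\lambda)}\le T^\lambda\sigma^{-(\hat{\alpha}+\lambda)}$, and bound by the maximum of the constants, with $G$ handled identically.
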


\begin{proof}
From the definition of the space $\mathbb{V}_p$, we possess the pointwise estimate
$$ \|\varphi(\sigma)\|_{L^p(\Omega;\dot{\mathbb{H}}^{\nu})} \le \sigma^{-\lambda}\|\varphi\|_{\mathbb{V}_p}, \quad \sigma \in (0,T]. $$
Utilizing Assumptions~\ref{ass:measurability_drift_diff} and \ref{ass:global_lips_nonlin}, we obtain
$$ \begin{aligned}
\|F(\sigma,\varphi(\sigma))\|_{L^p(\Omega;\dot{\mathbb{H}}^{-\alpha})} 
&\le L_1 \sigma^{-\hat{\alpha}} \|\varphi(\sigma)\|_{L^p(\Omega;\dot{\mathbb{H}}^{\nu})} + L_2 \sigma^{-\hat{\alpha}} \\
&\le L_1 \|\varphi\|_{\mathbb{V}_p} \sigma^{-(\hat{\alpha}+\lambda)} + L_2 \sigma^{-\hat{\alpha}}.
\end{aligned} $$
Since $\sigma \le T$, we can bound the second term by writing $\sigma^{-\hat{\alpha}} = \sigma^{-(\hat{\alpha}+\lambda)} \sigma^\lambda \le T^\lambda \sigma^{-(\hat{\alpha}+\lambda)}$. Substituting this yields
$$ \begin{aligned}
\|F(\sigma,\varphi(\sigma))\|_{L^p(\Omega;\dot{\mathbb{H}}^{-\alpha})} 
&\le L_1 \|\varphi\|_{\mathbb{V}_p} \sigma^{-(\hat{\alpha}+\lambda)} + L_2 T^{\lambda} \sigma^{-(\hat{\alpha}+\lambda)} \\
&\le \max\{L_1, L_2 T^{\lambda}\} \sigma^{-(\hat{\alpha}+\lambda)} \left(1+\|\varphi\|_{\mathbb{V}_p}\right).
\end{aligned} $$
The estimate for $G$ follows by an identical argument.
\end{proof}

Next, we establish the well-definedness and Hölder regularity of the deterministic and stochastic convolutions. The shifted convolution operators introduced in this lemma will play a crucial role in the proofs of Theorem~\ref{thm:main_global_existence} and Theorem~\ref{thm:pathwise_regularity}.

\begin{lemma}[Convolution regularity]\label{lem:convolutions_regularity}
    Suppose Assumptions~\ref{ass:measurability_drift_diff} and~\ref{ass:global_lips_nonlin} hold under the parameters satisfying Assumption~\ref{ass:parameters}. Then, for $0 \leq t_0 < T$, the shifted convolution operators \(\mathcal{J}_F^{t_0}, \mathcal{J}_G^{t_0} \colon \mathbb{V}_p \to \mathbb{V}_p\) for \(p \ge 2\), defined by
\begin{align*}
    (\mathcal{J}_F^{t_0}\varphi)(t) &\coloneqq \int_{0}^t S(t-s)\,\mathbf{1}_{[t_0,T]}(s)\, F(s,\varphi(s))\,ds, \\
    (\mathcal{J}_G^{t_0}\varphi)(t) &\coloneqq \int_{0}^t S(t-s)\,\mathbf{1}_{[t_0,T]}(s)\,G(s,\varphi(s))\,dW(s),
\end{align*}
with $ \mathcal{J}_F \coloneqq \mathcal{J}_F^0$ and $\mathcal{J}_G \coloneqq \mathcal{J}_G^0$, are well-defined. Moreover, there exist positive constants $C$ and $C_a$ (where $C_a$ may diverge as $a \searrow 0$) such that these operators satisfy the following regularity bounds:
\begin{enumerate}[label={\upshape(\roman*)}]
    \item \textbf{Deterministic convolution:} Let \(\eta \le \nu\). Then, the following estimate holds:
    \begin{equation}\label{eq:det_w_norm_bound}
        \|(\mathcal{J}_F^{t_0}\varphi)(t)\|_{L^p(\Omega; \dot{\mathbb{H}}^\eta)} \leq C\big(1+\|\varphi\|_{\mathbb{V}_p}\big)\,t^{1-\hat{\alpha}-\lambda-\frac{1}{2}(\eta+\alpha)^+},\quad t\in(0,T].
    \end{equation}
    Moreover, for any $a > 0$ and $0<\delta < 1 - \frac{1}{2}(\eta+\alpha)^+$, $\mathcal{J}_F^{t_0}\varphi \in \mathcal{C}^{0, \delta}([a, T]; L^p(\Omega; \dot{\mathbb{H}}^\eta))$ and the following local estimate holds:
    \begin{equation}\label{eq:det_con_local}
        \|\mathcal{J}_F^{t_0} \varphi\|_{\mathcal{C}^{0, \delta}([a, T]; L^p(\Omega; \dot{\mathbb{H}}^\eta))} \leq C_a\big(1+\|\varphi\|_{\mathbb{V}_p}\big).
    \end{equation}
    In particular, for \(\eta=\mu\) and $0<\delta < 1 - \hat{\alpha} - \lambda - \frac{1}{2}(\mu+\alpha)^+$, we have $\mathcal{J}_F^{t_0}\varphi \in \mathcal{C}^{0, \delta}([0, T]; L^p(\Omega; \dot{\mathbb{H}}^\mu))$, and the following global estimate holds:
    \begin{equation}\label{eq:det_con_global}
        \|\mathcal{J}_F^{t_0}\varphi\|_{\mathcal{C}^{0, \delta}([0, T]; L^p(\Omega; \dot{\mathbb{H}}^\mu))} \leq C\big(1+\|\varphi\|_{\mathbb{V}_p}\big).
    \end{equation}
    
    \item \textbf{Stochastic convolution:} Let \(\vartheta \le \nu\). Then, the following estimate holds: 
    \begin{equation}\label{eq:stoc_w_norm_bound}
        \|(\mathcal{J}_G^{t_0}\varphi)(t)\|_{L^p(\Omega; \dot{\mathbb{H}}^\vartheta)} \leq C\big(1+\|\varphi\|_{\mathbb{V}_p}\big)\,t^{\frac{1}{2}-\hat{\beta}-\lambda-\frac{1}{2}(\vartheta+\beta)^+},\quad t\in(0,T].
    \end{equation}
    Moreover, for any \(a>0\) and $0<\delta < \frac{1}{2}  - \frac{1}{2}(\vartheta+\beta)^+$, $\mathcal{J}_G^{t_0}\varphi \in \mathcal{C}^{0, \delta}([a, T]; L^p(\Omega; \dot{\mathbb{H}}^\vartheta))$ and the following local estimate holds:
    \begin{equation}\label{eq:sto_con_local}
        \|\mathcal{J}_G^{t_0} \varphi\|_{\mathcal{C}^{0, \delta}([a, T]; L^p(\Omega; \dot{\mathbb{H}}^\vartheta))} \leq C_a\big(1+\|\varphi\|_{\mathbb{V}_p}\big).
    \end{equation}
    In particular, for \(\vartheta=\mu\) and $0<\delta < \frac{1}{2} - \hat{\beta} - \lambda - \frac{1}{2}(\mu+\beta)^+$, we obtain $\mathcal{J}_G^{t_0} \varphi \in \mathcal{C}^{0, \delta}([0, T]; L^p(\Omega; \dot{\mathbb{H}}^\mu))$ satisfying the following global estimate:
    \begin{equation}\label{eq:sto_con_global}
        \|\mathcal{J}_G^{t_0} \varphi\|_{\mathcal{C}^{0, \delta}([0, T]; L^p(\Omega; \dot{\mathbb{H}}^\mu))} \leq C\big(1+\|\varphi\|_{\mathbb{V}_p}\big).
    \end{equation}
\end{enumerate}
\end{lemma}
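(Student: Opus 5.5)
We argue directly from the semigroup estimates of Lemma~\ref{lem:semigroup_estimates}, the moment bounds of Lemma~\ref{lem:Bounds_on_F_and_G}, Minkowski's integral inequality for the deterministic convolution and Lemma~\ref{lem:BDG} for the stochastic one. Well-definedness and predictability come first. For $\varphi\in\mathbb{V}_p$ with a predictable version, Assumption~\ref{ass:measurability_drift_diff} makes $s\mapsto \mathbf{1}_{[t_0,T]}(s)F(s,\varphi(s))$ and the corresponding $G$-term predictable. The integrability needed to define the integrals in $\dot{\mathbb{H}}^\eta$ (resp. $\dot{\mathbb{H}}^\vartheta$) then follows from the bounds below, since the singularities have integrable exponents. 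The indicator only shrinks the domain of integration, so every bound proved for $t_0=0$ holds uniformly in $t_0$.

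\textbf{Pointwise bounds \eqref{eq:det_w_norm_bound}, \eqref{eq:stoc_w_norm_bound}.} For the drift term we write $A^{\eta/2}S(t-s)F=A^{(\eta+\alpha)/2}S(t-s)\,A^{-\alpha/2}F$ and apply Lemma~\ref{lem:semigroup_estimates}\ref{est:semigroup_smoothing} together with Lemma~\ref{lem:Bounds_on_F_and_G}. Minkowski's inequality then gives
\[
\|(\mathcal{J}_F^{t_0}\varphi)(t)\|_{L^p(\Omega;\dot{\mathbb{H}}^\eta)}\le C(1+\|\varphi\|_{\mathbb{V}_p})\int_0^t (t-s)^{-\frac12(\eta+\alpha)^+}s^{-(\hat\alpha+\lambda)}\,ds .
\]
This is a Beta integral equal to $B\cdot t^{1-\hat\alpha-\lambda-\frac12(\eta+\alpha)^+}$. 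It is finite because $\eta\le\nu$ and Assumption~\ref{ass:parameters} give $\frac12(\eta+\alpha)^+<1$ and $\hat\alpha+\lambda<1$; the latter holds since $\hat\alpha<1-\lambda$. For the diffusion term, Lemma~\ref{lem:BDG} bounds the $p$-th moment by $\mathbb{E}(\int_0^t\|A^{\vartheta/2}S(t-s)G\|_{\mathrm{HS}}^2ds)^{p/2}$. Minkowski's inequality in $L^{p/2}(\Omega)$ moves the $L^p$-norm inside the time integral, and we obtain
\[
\int_0^t(t-s)^{-(\vartheta+\beta)^+}s^{-2(\hat\beta+\lambda)}\,ds=B'\,t^{1-(\vartheta+\beta)^+-2(\hat\beta+\lambda)}.
\]
This is finite since $(\nu+\beta)^+<1$ and $\hat\beta+\lambda<\frac12$. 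Taking the square root gives \eqref{eq:stoc_w_norm_bound}. Membership in $\mathbb{V}_p$ then follows from the choices $\eta,\vartheta\in\{\mu,\nu\}$. The resulting exponents after multiplying by $t^\lambda$ are nonnegative, which gives boundedness. Continuity follows from the Hölder estimates below, combined with the fact that the pointwise bound at $\eta=\mu$ tends to $0$ as $t\to0$.

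\textbf{Hölder estimates.} For $s<t$ we split
\[
\mathcal{J}(t)-\mathcal{J}(s)=\int_s^t S(t-r)(\cdots)\,dr+\int_0^s\big(S(t-r)-S(s-r)\big)(\cdots)\,dr,
\]
and treat the stochastic convolution the same way, each piece handled with Lemma~\ref{lem:BDG}.

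The first piece is bounded as in the pointwise estimate, but over $[s,t]$. Locally, on $[a,T]$, the factor $r^{-(\hat\alpha+\lambda)}\le a^{-(\hat\alpha+\lambda)}$ is harmless and the piece is $O((t-s)^{1-\frac12(\eta+\alpha)^+})$. Globally (with $\eta=\mu$) we use $\int_s^t(t-r)^{-\kappa}r^{-\gamma}dr\le C(t-s)^{1-\kappa-\gamma}$, which follows by splitting at the midpoint or by the substitution $r=s+\theta(t-s)$ together with $r\ge r-s$.

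For the second piece we apply Lemma~\ref{lem:semigroup_estimates}\ref{est:semigroup_continuity_t} with $\rho=\delta$, which gives the factor $(t-s)^\delta(s-r)^{-\frac12(\eta+\alpha)^+-\delta}$. Requiring $\frac12(\eta+\alpha)^++\delta<1$ makes this singularity integrable; the stochastic analogue requires $(\vartheta+\beta)^++2\delta<1$. In the local case the time weight is bounded on $[a,T]$ only for $r\ge a/2$. The region $r<a/2$ has $s-r\ge a/2$, so there the semigroup factor is bounded while $r^{-(\hat\alpha+\lambda)}$ remains integrable.

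In the global case we must control $\int_0^s (s-r)^{-\kappa-\delta}r^{-\gamma}dr=C s^{1-\kappa-\delta-\gamma}$. This is bounded exactly when $\delta\le 1-\kappa-\gamma$, which is precisely the stated restriction $\delta<1-\hat\alpha-\lambda-\frac12(\mu+\alpha)^+$ (and its stochastic analogue after squaring).

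\textbf{Main obstacle.} The hardest step is the global Hölder bound near $t=0$. There the time singularity of the coefficients and the semigroup singularity interact, and the Beta-type integrals must be tracked so that no $\epsilon$ of Hölder regularity is lost. For the stochastic term it is also important to place Minkowski's inequality correctly relative to Lemma~\ref{lem:BDG} (moment inside the time integral) before evaluating these integrals.
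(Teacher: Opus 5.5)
Your proposal is correct and follows essentially the same route as the paper. Both arguments use integrand bounds from Lemma~\ref{lem:semigroup_estimates} and Lemma~\ref{lem:Bounds_on_F_and_G}, Minkowski's inequality (resp.\ Lemma~\ref{lem:BDG} with the moment moved inside the time integral) plus Beta integrals for the pointwise bounds, and then the split $\int_s^t+\int_0^s\big(S(t-r)-S(s-r)\big)$ with $r^{-(\hat\alpha+\lambda)}\le a^{-(\hat\alpha+\lambda)}$ locally and $r^{-(\hat\alpha+\lambda)}\le (r-s)^{-(\hat\alpha+\lambda)}$ globally. Your extra split at $a/2$ in the local second piece is harmless but unnecessary, since the Beta integral over $[0,s]$ is already finite and its power of $s$ is bounded on $[a,T]$.

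The only point you leave implicit is that membership in $\mathbb{V}_p$ also requires a predictable version of the output process $t\mapsto(\mathcal{J}_F^{t_0}\varphi)(t)$ (resp.\ $\mathcal{J}_G^{t_0}\varphi$), not just predictability of the integrands. The paper obtains it from adaptedness (Fubini, resp.\ the It\^o construction) together with the $L^p(\Omega;\dot{\mathbb{H}}^\mu)$-continuity you already prove, via the standard fact that adapted, stochastically continuous processes admit predictable versions.
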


\begin{proof}
Let \(\varphi\in\mathbb{V}_p\) for \(p\geq 2\). Then, by the definition of the space $\mathbb{V}_p$, $\varphi$ admits a predictable version, with which we identify $\varphi$. Since the composition of a measurable mapping with a predictable process yields a predictable process, Assumption \ref{ass:measurability_drift_diff} ensures that $\{F(\sigma, \varphi(\sigma))\}_{\sigma\in[0,T]}$ and $\{G(\sigma, \varphi(\sigma))\}_{\sigma\in[0,T]}$ are predictable processes taking values in $\dot{\mathbb{H}}^{-\alpha}$ and $\mathrm{HS}(U_0, \dot{\mathbb{H}}^{-\beta})$, respectively. 

Since, for each fixed \(t\in[0,T]\), \(\{\mathbf{1}_{[t_0,T]}(\sigma)\}_{\sigma\in[0,T]} \) is a deterministic process and \(\{S(t)\}_{t\geq 0}\) is an analytic $\mathcal{C}_0$-semigroup, the processes \(\{\mathbf{\Phi}_t(\sigma)\}_{\sigma\in[0,T]}\) and \(\{\mathbf{\Psi}_t(\sigma)\}_{\sigma\in[0,T]}\) are predictable, where
\begin{align*}
    \mathbf{\Phi}_t(\sigma) &\coloneqq \mathbf{1}_{[t_0,T]}(\sigma)\,S(t-\sigma)\,F(\sigma, \varphi(\sigma)), \\
    \mathbf{\Psi}_t(\sigma) &\coloneqq \mathbf{1}_{[t_0,T]}(\sigma)\,S(t-\sigma)\,G(\sigma, \varphi(\sigma)).
\end{align*}
Moreover, for any \(\eta,\vartheta\in\mathbb{R}\), \(\rho\in(0,1]\), and \(s,t\in[0,T]\), by utilizing Lemma~\ref{lem:semigroup_estimates} and Lemma~\ref{lem:Bounds_on_F_and_G}, we obtain the following estimates:
\begin{align}
    \nonumber \|\mathbf{\Phi}_t(\sigma)\|_{L^p(\Omega;\dot{\mathbb{H}}^{\eta})} 
    &\leq \|A^{\frac{\eta+\alpha}{2}} S(t-\sigma) \|_{\mathcal{L}(\mathbb{H})} \|F(\sigma, \varphi(\sigma))\|_{L^p(\Omega; \dot{\mathbb{H}}^{-\alpha})}\\
    \label{eq:Phi_t_estimate} 
    &\leq C \max\{L_1, L_2 T^\lambda\} \big(1+\|\varphi\|_{\mathbb{V}_p}\big) (t-\sigma)^{-\frac{1}{2}(\eta+\alpha)^+} \sigma^{-(\hat{\alpha}+\lambda)} \quad \forall \,0< \sigma < t,\\
    \nonumber \|\mathbf{\Psi}_t(\sigma)\|_{L^p(\Omega;\mathrm{HS}(U_0,\dot{\mathbb{H}}^{\vartheta}))} 
    &\leq \|A^{\frac{\vartheta+\beta}{2}} S(t-\sigma) \|_{\mathcal{L}(\mathbb{H})} \|G(\sigma,\varphi(\sigma))\|_{L^p(\Omega;\mathrm{HS}(U_0,\dot{\mathbb{H}}^{-\beta}))} \\
    \label{eq:Psi_t_estimate} 
    &\leq C \max\{L_3, L_4 T^\lambda\} \big(1+\|\varphi\|_{\mathbb{V}_p}\big) (t-\sigma)^{-\frac{1}{2}(\vartheta+\beta)^+} \sigma^{-(\hat{\beta}+\lambda)} \quad \forall \,0< \sigma < t. 
\end{align}
Furthermore, for \(0<\sigma<s\wedge t\), we have
\begin{align}
   \nonumber \|\mathbf{\Phi}_t(\sigma)-\mathbf{\Phi}_s(\sigma)\|_{L^p(\Omega;\dot{\mathbb{H}}^{\eta})} 
   &\leq \| A^{\frac{\eta+\alpha}{2}}\big( S(t-\sigma) - S(s-\sigma)\big)\|_{\mathcal{L}(\mathbb{H})} \|F(\sigma, \varphi(\sigma))\|_{L^p(\Omega; \dot{\mathbb{H}}^{-\alpha})} \\
   \label{eq:Phi_t_diff_estimate}
   &\leq C \max\{L_1, L_2 T^\lambda\} \big(1+\|\varphi\|_{\mathbb{V}_p}\big) |t-s|^\rho(s\wedge t-\sigma)^{-\frac{1}{2}(\eta+\alpha +2\rho)^+} \sigma^{-(\hat{\alpha}+\lambda)},\\
   \nonumber \|\mathbf{\Psi}_t(\sigma)-\mathbf{\Psi}_s(\sigma)\|_{L^p(\Omega;\mathrm{HS}(U_0,\dot{\mathbb{H}}^{\vartheta}))} 
   &\leq \|A^{\frac{\vartheta+\beta}{2}} \big(S(t-\sigma) - S(s-\sigma)\big) \|_{\mathcal{L}(\mathbb{H})} \|G(\sigma,\varphi(\sigma))\|_{L^p(\Omega;\mathrm{HS}(U_0,\dot{\mathbb{H}}^{-\beta}))}\\
   \label{eq:Psi_t_diff_estimate}
   &\leq C \max\{L_3, L_4 T^\lambda\} \big(1+\|\varphi\|_{\mathbb{V}_p}\big) |t-s|^\rho(s\wedge t-\sigma)^{-\frac{1}{2}(\vartheta+\beta+2\rho)^+} \sigma^{-(\hat{\beta}+\lambda)}.
\end{align}
Let \(\eta\leq \nu\). Applying Minkowski's integral inequality and the estimate \eqref{eq:Phi_t_estimate} yields, for \(t>0\):
\begin{align}
\nonumber \|(\mathcal{J}_F^{t_0}\varphi)(t)\|_{L^p(\Omega;\dot{\mathbb{H}}^{\eta})} 
&\le \int_{0}^t \|\mathbf{\Phi}_t(\sigma)\|_{L^p(\Omega;\dot{\mathbb{H}}^{\eta})} \, d\sigma \\
\nonumber&\le C \max\{L_1, L_2 T^\lambda\} \big(1+\|\varphi\|_{\mathbb{V}_p}\big) \int_{0}^t (t-\sigma)^{-\frac{1}{2}(\eta+\alpha)^+} \sigma^{-(\hat{\alpha}+\lambda)} \, d\sigma \\
\label{eq:F_bound}&\leq C \max\{L_1, L_2 T^\lambda\} \big(1+\|\varphi\|_{\mathbb{V}_p}\big) B\Big(1-\frac{1}{2}(\eta+\alpha)^+, 1-\hat{\alpha}-\lambda\Big)\, t^{1-\hat{\alpha} - \lambda -\frac{1}{2}(\eta+\alpha)^+},
\end{align}
where \(B\) is the Euler Beta function. The estimate \eqref{eq:F_bound} establishes the result \eqref{eq:det_w_norm_bound}, and yields the following:
\begin{align}
\label{eq:F_Vp-norm_bound}& \sup_{t \in (0,T]} \Big( \|(\mathcal{J}_F^{t_0}\varphi)(t)\|_{L^p(\Omega; \dot{\mathbb{H}}^{\mu})} + t^{\lambda} \|(\mathcal{J}_F^{t_0}\varphi)(t)\|_{L^p(\Omega; \dot{\mathbb{H}}^{\nu})} \Big)\,<\,\infty.
\end{align}
To establish the H\"older continuity of \(\mathcal{J}_F^{t_0}\varphi\), we estimate the difference 
\begin{align}
    \label{eq:Holder_F_difference_eq}& (\mathcal{J}_F^{t_0}\varphi)(t) - (\mathcal{J}_F^{t_0}\varphi)(s) = \int_s^t \mathbf{\Phi}_t(\sigma)\,d\sigma + \int_{0}^s\Big(\mathbf{\Phi}_t(\sigma) - \mathbf{\Phi}_s(\sigma)\Big)\,d\sigma 
\end{align}
for \(0\leq s <t\leq T\). Choose \((s_0,\rho)\in[0,T]\times(0,1]\) such that
\begin{align}
    \nonumber&  \int_{0}^s(s-\sigma)^{-\frac{1}{2}(\eta+\alpha+2\rho)^+} \sigma^{-\hat{\alpha} - \lambda}\,d\sigma \,<\,\infty\quad \forall s\in[s_0,T],
\end{align}
and then from \eqref{eq:Holder_F_difference_eq}, by applying the Minkowski's integral inequality along with the estimate \eqref{eq:Phi_t_diff_estimate}, obtain:
 \begin{align}
    \nonumber& \|(\mathcal{J}_F^{t_0}\varphi)(t) - (\mathcal{J}_F^{t_0}\varphi)(s)\|_{L^p(\Omega;\dot{\mathbb{H}}^{\eta})}  \leq C\max\{L_1,L_2 T^\lambda\}(1+\|\varphi\|_{\mathbb{V}_p}) \Big(\int_s^t (t-\sigma)^{-\frac{1}{2}(\eta+\alpha)^+}\sigma^{-\hat{\alpha}-\lambda}\,d\sigma \\
    \label{eq:Holder_F_difference_eq_estimate}&\hspace{5cm}+ |t-s|^\rho\int_{0}^s(s-\sigma)^{-\left(\frac{\eta+\alpha}{2}+\rho\right)^+} \sigma^{-\hat{\alpha} - \lambda}\,d\sigma \Big)\quad\text{ for }\;0\leq s_0 \leq s <t\leq T.
 \end{align}
 Setting \(s_0=a>0\) and \(0<\rho=\delta<1- \frac{1}{2}(\eta+\alpha)^+\) in \eqref{eq:Holder_F_difference_eq_estimate}, we obtain, for \(a\leq s < t\leq T\): 
\begin{align}
    \nonumber& \|(\mathcal{J}_F^{t_0}\varphi)(t) - (\mathcal{J}_F^{t_0}\varphi)(s)\|_{L^p(\Omega;\dot{\mathbb{H}}^{\eta})}\,\leq\, C\max\{L_1,L_2 T^\lambda\}(1+\|\varphi\|_{\mathbb{V}_p})|t-s|^\delta \Bigg( \frac{a^{-\lambda-\hat{\alpha}}}{1-\frac{1}{2}(\eta+\alpha)^+}\\
    \label{eq:Holder_F_difference_eq_eta_estimate}&\hspace{4cm} +D_a B\Big(1-\frac{1}{2}(\eta+\alpha+2\delta)^+,\,1-\hat{\alpha}-\lambda \Big)  \Bigg), 
 \end{align}
where 
\begin{align}
    \nonumber&D_a\coloneqq \begin{cases}
        a^{1-\hat{\alpha}-\lambda -\frac{1}{2}(\eta+\alpha+2\delta)^+} & :\,\text{if }1-\hat{\alpha}-\lambda -\frac{1}{2}(\eta+\alpha+2\delta)^+ \,<\,0,\\
        T^{1-\hat{\alpha}-\lambda -\frac{1}{2}(\eta+\alpha+2\delta)^+} &:\,\text{ otherwise.}
    \end{cases}
\end{align}
This establishes estimate \eqref{eq:det_con_local}. Setting \(\eta=\mu, s_0=0,\) and \(0<\rho=\delta<1-\hat{\alpha} - \lambda - \frac{1}{2}(\mu+\alpha)^+\) in \eqref{eq:Holder_F_difference_eq_estimate} and using \(\sigma^{-(\hat{\alpha}+\lambda)}\leq (\sigma-s)^{-(\hat{\alpha}+\lambda)}\) for all \(\sigma>s\), yields for \(0\leq  s < t\leq T\):
 \begin{align}
    \nonumber \|(\mathcal{J}_F^{t_0}\varphi)(t) - (\mathcal{J}_F^{t_0}\varphi)(s)\|_{L^p(\Omega;\dot{\mathbb{H}}^{\mu})}  
    &\leq C\max\{L_1,L_2 T^\lambda\}\big(1+\|\varphi\|_{\mathbb{V}_p}\big) \Bigg(\int_s^t (t-\sigma)^{-\frac{1}{2}(\mu+\alpha)^+}(\sigma-s)^{-(\hat{\alpha}+\lambda)}\,d\sigma \\
    \nonumber&\hspace{1cm}+ |t-s|^\delta\int_{0}^s(s-\sigma)^{-\left(\frac{\mu+\alpha}{2}+\delta\right)^+} \sigma^{-(\hat{\alpha} + \lambda)}\,d\sigma \Bigg)\\
    \nonumber&\leq C\max\{L_1,L_2 T^\lambda\} \big(1+\|\varphi\|_{\mathbb{V}_p}\big)\,|t-s|^\delta\,\Bigg(B\Big(1-\frac{1}{2}(\mu+\alpha)^+, 1-\hat{\alpha}-\lambda\Big)\\
    \label{eq:Holder_F_difference_eq_mu_estimate}&\hspace{1cm}+ T^{1-\hat{\alpha}-\lambda - \frac{1}{2}(2\delta+\mu+\alpha)^+} B\Big(1-\frac{1}{2}(2\delta+\mu+\alpha)^+, 1-\hat{\alpha}-\lambda\Big)\Bigg). 
\end{align}
The estimate \eqref{eq:Holder_F_difference_eq_mu_estimate} establishes \eqref{eq:det_con_global}. Note that estimates \eqref{eq:Holder_F_difference_eq_mu_estimate}, \eqref{eq:Holder_F_difference_eq_eta_estimate}, and \eqref{eq:F_Vp-norm_bound} imply that \(\mathcal{J}_F^{t_0}\varphi\in \mathcal{C}\big([0,T]; L^p(\Omega; \dot{\mathbb{H}}^{\mu})\big) \) and \(t^\lambda \mathcal{J}_F^{t_0}\varphi\in \mathcal{C}_b\big((0,T]; L^p(\Omega; \dot{\mathbb{H}}^{\nu})\big)\). Moreover, for each fixed \(t\in[0,T]\), \(\{\mathbf{\Phi}_t(\sigma)\}_{\sigma\in[0,T]}\) is an \(\dot{\mathbb H}^{\mu}\)-valued predictable process and hence progressively measurable, i.e., \(\mathbf{\Phi}_t(\cdot)\) restricted to \([0,t]\times\Omega\) is \(\mathcal{B}([0,t])\otimes\mathcal{F}_t\big/
\mathcal B(\dot{\mathbb H}^{\mu})\)-measurable. Also, estimate \eqref{eq:F_bound} implies that \(\mathbb{E}\int_0^t\|\mathbf{\Phi}_t(\sigma)\|_{\dot{\mathbb H}^{\mu}}\,d\sigma \,<\,\infty\), and hence an appeal to the Fubini's theorem yields that \(\mathcal{J}_F^{t_0}\varphi(t)\) is \(\mathcal{F}_t\)-measurable. Estimate \eqref{eq:Holder_F_difference_eq_mu_estimate} implies that the process \(\{\mathcal{J}_F^{t_0}\varphi(t)\}_{t\in[0,T]}\) is stochastically continuous. By invoking a standard result due to Da~Prato and Zabczyk
(see \cite[Proposition~3.7(ii)]{MR3236753}), the
adapted and stochastically continuous process \(\{\mathcal{J}_F^{t_0}\varphi(t)\}_{t\in[0,T]}\) admits a predictable version. Hence, \(\mathcal{J}_F^{t_0}\varphi\in\mathbb{V}_p\).

To estimate the stochastic convolution, we apply Lemma~\ref{lem:BDG} (Burkholder--Davis--Gundy inequality) together with Minkowski's integral inequality and obtain, for \(\vartheta\leq\nu\) and \(t>0\):
\begin{align}
  \nonumber  \|(\mathcal{J}_G^{t_0}\varphi)(t)\|_{L^p(\Omega;\dot{\mathbb{H}}^{\vartheta})}
&\,\le\,
C
\left\| \left( \int_{0}^t \|\mathbf{\Psi}_t(\sigma)\|_{\mathrm{HS}(U_0,\dot{\mathbb{H}}^{\vartheta})}^2 \, d\sigma \right)^{1/2} \right\|_{L^p(\Omega;\mathbb{R})} \,\le\,
C
 \left( \int_{0}^t \|\mathbf{\Psi}_t(\sigma)\|_{L^p(\Omega;\mathrm{HS}(U_0,\dot{\mathbb{H}}^{\vartheta}))}^2 \, d\sigma \right)^{1/2}.
\end{align}
Now, an appeal to the estimate \eqref{eq:Psi_t_estimate} yields, for \(t>0\):
\begin{align}
   \nonumber & \|(\mathcal{J}_G^{t_0}\varphi)(t)\|_{L^p(\Omega;\dot{\mathbb{H}}^{\vartheta})} \le
C \max\{L_3,\,L_4 T^\lambda\} \left(1+\|\varphi\|_{\mathbb{V}_p}\right)
\left( \int_0^t (t-\sigma)^{-(\vartheta+\beta)^+} \sigma^{-2(\hat{\beta}+\lambda)} \, d\sigma \right)^{1/2}\\
\label{eq:G_bound}&\hspace{2cm}\le
C \max\{L_3,\,L_4 T^\lambda\} \left(1+\|\varphi\|_{\mathbb{V}_p}\right)\left(B(1-(\vartheta+\beta)^+, 1-2\hat{\beta} - 2\lambda)\right)^{1/2}\,t^{\frac{1}{2}-\hat{\beta}-\lambda - \frac{1}{2}(\vartheta+\beta)^+}.
\end{align}
This establishes the estimate \eqref{eq:stoc_w_norm_bound}, and yields the following:
\begin{align}
\label{eq:G_Vp-norm_bound}& \sup_{t \in (0,T]} \Big( \|(\mathcal{J}_G^{t_0}\varphi)(t)\|_{L^p(\Omega; \dot{\mathbb{H}}^{\mu})} + t^{\lambda} \|(\mathcal{J}_G^{t_0}\varphi)(t)\|_{L^p(\Omega; \dot{\mathbb{H}}^{\nu})} \Big)\,<\,\infty.
\end{align}
To establish the H\"older continuity of the stochastic convolution \(\mathcal{J}_G^{t_0}\varphi\), we estimate the difference 
\begin{align}
   \label{eq:Holder_G_difference_eq}  (\mathcal{J}_G^{t_0}\varphi)(t) - (\mathcal{J}_G^{t_0}\varphi)(s) = &\int_s^t \mathbf{\Psi}_t (\sigma)\,dW(\sigma) + \int_{0}^s\Big(\mathbf{\Psi}_t (\sigma) - \mathbf{\Psi}_s (\sigma)\Big)\,dW(\sigma) 
\end{align}
for \(0\leq s <t\leq T\). Choose \((s_0,\rho)\in[0,T]\times(0,1]\) such that
\begin{align}
    \nonumber&  \int_{0}^s (s-\sigma)^{-(\vartheta+\beta+2\rho)^+} \sigma^{-2(\hat{\beta}+\lambda)} \, d\sigma \,<\,\infty\quad \forall s\in[s_0,T],
\end{align}
and then apply the Lemma~\ref{lem:BDG} (Burkholder--Davis--Gundy inequality), Minkowski's integral inequality, and the estimate \eqref{eq:Psi_t_diff_estimate}, to obtain:
 \begin{align}
    \nonumber& \|(\mathcal{J}_G^{t_0}\varphi)(t) - (\mathcal{J}_G^{t_0}\varphi)(s)\|_{L^p(\Omega;\dot{\mathbb{H}}^{\vartheta})}  \leq C\max\{L_3,L_4 T^\lambda\}(1+\|\varphi\|_{\mathbb{V}_p}) \Bigg(\Big( \int_s^t (t-\sigma)^{-(\vartheta+\beta)^+} \sigma^{-2(\hat{\beta}+\lambda)} \, d\sigma \Big)^{1/2} \\
    \label{eq:Holder_G_difference_eq_estimate}&\hspace{3cm}+ |t-s|^\rho\Big( \int_{0}^s (s-\sigma)^{-(\vartheta+\beta+2\rho)^+} \sigma^{-2(\hat{\beta}+\lambda)} \, d\sigma \Big)^{1/2}\Bigg)\quad\text{ for }\;0\leq s_0\leq s<t\leq T.
 \end{align}
 Setting \(s_0=a>0\) and \(0<\rho=\delta<\frac{1}{2}- \frac{1}{2}(\vartheta+\beta)^+\) in \eqref{eq:Holder_G_difference_eq_estimate}, we obtain, for \(a\leq s < t\leq T\): 
\begin{align}
    \nonumber& \|(\mathcal{J}_G^{t_0}\varphi)(t) - (\mathcal{J}_G^{t_0}\varphi)(s)\|_{L^p(\Omega;\dot{\mathbb{H}}^{\vartheta})}\,\leq\, C\max\{L_3,L_4 T^\lambda\}(1+\|\varphi\|_{\mathbb{V}_p})|t-s|^\delta \Bigg( \Big(\frac{a^{-2\lambda-2\hat{\beta}}}{1-(\vartheta+\beta)^+}\Big)^{1/2}\\
    \label{eq:Holder_G_difference_eq_eta_estimate}&\hspace{3cm}+ \left(\widetilde{D}_a\,B\big(1-(\vartheta+\beta+2\delta)^+,\,1-2\hat{\beta} -2\lambda\big)\right)^{1/2} \Bigg),
 \end{align}
where 
\begin{align}
    \nonumber&\widetilde{D}_a\coloneqq \begin{cases}
        a^{1-2\hat{\beta}-2\lambda -(\vartheta+\beta+2\delta)^+} & :\,\text{if } 1-2\hat{\beta}-2\lambda -(\vartheta+\beta+2\delta)^+ \,<\,0,\\
        T^{1-2\hat{\beta}-2\lambda -(\vartheta+\beta+2\delta)^+} &:\,\text{ otherwise.}
    \end{cases}
\end{align}
This establishes the estimate \eqref{eq:sto_con_local}. Now, by setting \(\vartheta=\mu\), \(t_0=0\), and \(0<\rho=\delta<\frac{1}{2}-\hat{\beta} - \lambda - \frac{1}{2}(\mu+\beta)^+\) in \eqref{eq:Holder_G_difference_eq_estimate} and using \(\sigma^{-2\hat{\beta}-2\lambda}\leq (\sigma-s)^{-2\hat{\beta}-2\lambda}\; \forall\,\sigma>s\), we obtain, for \(0\leq s < t\leq T\): 
\begin{align}
    \nonumber \|(\mathcal{J}_G\varphi)(t) - (\mathcal{J}_G\varphi)(s)\|_{L^p(\Omega;\dot{\mathbb{H}}^{\mu})}  &\leq C\max\{L_3,L_4 T^\lambda\}(1+\|\varphi\|_{\mathbb{V}_p}) \Bigg(\Big( \int_s^t (t-\sigma)^{-(\mu+\beta)^+} (\sigma-s)^{-2(\hat{\beta}+\lambda)} \, d\sigma \Big)^{1/2} \\
    \nonumber&\hspace{3cm}+ |t-s|^\delta\Big( \int_{0}^s (s-\sigma)^{-(\mu+\beta+2\delta)^+} \sigma^{-2(\hat{\beta}+\lambda)} \, d\sigma \Big)^{1/2}\Bigg)\\
    \nonumber&\leq C\max\{L_3,L_4 T^\lambda\}(1+\|\varphi\|_{\mathbb{V}_p})|t-s|^\delta \Bigg(\Big(B(1-(\mu+\beta)^+,1-2\hat{\beta}-2\lambda)\Big)^{1/2}\\
    \label{eq:Holder_G_difference_eq_mu_estimate}&\hspace{0.5cm}+ \Big(T^{1-2\hat{\beta}-2\lambda -(\mu+\beta+2\delta)^+}\,B(1-(\mu+\beta+2\delta)^+,1-2\hat{\beta}-2\lambda)\Big)^{1/2}\Bigg). 
 \end{align}
The above estimate \eqref{eq:Holder_G_difference_eq_mu_estimate} establishes \eqref{eq:sto_con_global} and, as a consequence, \(\mathcal{J}_G^{t_0}\varphi\in \mathcal{C}\big([0,T]; L^p(\Omega; \dot{\mathbb{H}}^{\mu})\big) \) which implies that the process \(\{(\mathcal{J}_G^{t_0}\varphi)(t)\}_{t\in[0,T]}\) is stochastically continuous. The estimates \eqref{eq:Holder_G_difference_eq_eta_estimate} and \eqref{eq:G_Vp-norm_bound} imply that \(t^\lambda \mathcal{J}_G^{t_0}\varphi\in \mathcal{C}_b\big((0,T]; L^p(\Omega; \dot{\mathbb{H}}^{\nu})\big)\). For each fixed \(t\in[0,T]\), the integrand \(\{\mathbf{\Psi}_t(\sigma)\}_{\sigma\in[0,t]}\) is an \(\mathrm{HS}(U_0,\dot{\mathbb{H}}^{\mu})\)-valued predictable process satisfying, by \eqref{eq:Psi_t_estimate}, \(\int_{0}^t \|\mathbf{\Psi}_t(\sigma)\|_{L^p(\Omega;\,\mathrm{HS}(U_0,\dot{\mathbb{H}}^{\mu}))}^2\, d\sigma\,<\,\infty\). Therefore, from the standard construction of the It\^{o} integral (see  \cite[Chapter~6]{MR3410409}, \cite[Section~4.2]{MR3236753}), the stochastic convolution \((\mathcal{J}_G^{t_0}\varphi)(t)\) evaluates to an \(\mathcal{F}_t\)-measurable random variable, rendering the process \(\{(\mathcal{J}_G^{t_0}\varphi)(t)\}_{t\in[0,T]}\) adapted. Because the process \(\{(\mathcal{J}_G^{t_0}\varphi)(t)\}_{t\in[0,T]}\) is both adapted and stochastically continuous, it admits an \(\dot{\mathbb{H}}^{\mu}\)-valued predictable version (see, \cite[Proposition~3.7(ii)]{MR3236753}). Hence, we have \(\mathcal{J}_G^{t_0}\varphi\in \mathbb{V}_p\). This concludes the proof.
\end{proof}
\subsection{Proof of Theorem~\ref{thm:main_global_existence}}

\begin{proof}[\textbf{Proof of Theorem~\ref{thm:main_global_existence}\textup{(i)} - Existence and uniqueness.}]We define the operator $\mathcal{J} : \mathbb{V}_p \to \mathbb{V}_p$ by
\begin{align} \label{eq:operator_J_def}
(\mathcal{J}\varphi)(t)
&\coloneqq
S(t)X_0
+
\int_{0}^{t} S(t-\sigma)\, F(\sigma,\varphi(\sigma))\, d\sigma
+
\int_{0}^{t} S(t-\sigma)\, G(\sigma,\varphi(\sigma))\, dW(\sigma) \nonumber\\
&\eqqcolon \mathcal{J}_0(t) + (\mathcal{J}_F\varphi)(t) + (\mathcal{J}_G\varphi)(t),
\qquad t \in [0,T].
\end{align}
The operator $\mathcal{J}$ is a well-defined mapping from $\mathbb{V}_p$ to $\mathbb{V}_p$. To see this, note that the \(\mathcal{F}_0\)-measurability of \(\dot{\mathbb{H}}^\mu\)-valued \(X_0\), and the strong continuity and the smoothing property of the semigroup $\{S(t)\}_{t\geq 0}$ (Lemma~\ref{lem:semigroup_estimates}(i)) guarantee that the initial data term $\mathcal{J}_0(\cdot) \coloneqq S(\cdot)X_0$ belongs to $\mathbb{V}_p$. Furthermore, by Lemma~\ref{lem:convolutions_regularity}, the integral operators $\mathcal{J}_F$ and $\mathcal{J}_G$ are well-defined mappings from $\mathbb{V}_p$ to $\mathbb{V}_p$. Consequently, their sum $\mathcal{J}$ is well-defined. Next we show that the operator $\mathcal{J}:\mathbb{V}_p\to \mathbb{V}_p$ is a contraction map. Introduce an equivalent exponentially weighted norm \(\|\cdot\|_{\mathbb{V}_p,\gamma}\) on \(\mathbb{V}_p\) such that
\[
\|\varphi\|_{\mathbb{V}_p,\gamma}
\coloneqq
\sup_{t\in(0,T]} e^{-\gamma t}
\Big(
\|\varphi(t)\|_{L^p(\Omega;\dot{\mathbb{H}}^{\mu})}
+
t^{\lambda}\|\varphi(t)\|_{L^p(\Omega;\dot{\mathbb{H}}^{\nu})}
\Big),
\qquad
\lambda=(\nu-\mu)/2, \quad \gamma>0.
\]
Since $e^{-\gamma T}\le e^{-\gamma t}\le 1$ for all $t\in[0,T]$, the norms
$\|\cdot\|_{\mathbb{V}_p}$ and $\|\cdot\|_{\mathbb{V}_p,\gamma}$ are equivalent, and hence
$(\mathbb{V}_p,\|\cdot\|_{\mathbb{V}_p,\gamma})$ is a Banach space.
\medskip
Let $\varphi,\psi\in \mathbb{V}_p$. Then, for \((\eta+\alpha)^+<2\) and \(0<t\leq T\), an appeal to Lemma~\ref{lem:semigroup_estimates} and Assumptions~\ref{ass:global_lips_nonlin} yields:
\begin{align}
    \nonumber \|(\mathcal{J}_F\varphi)(t)-(\mathcal{J}_F\psi)(t)\|_{L^p(\Omega;\dot{\mathbb{H}}^\eta)} &\leq C\int_0^t (t-\sigma)^{-\frac{1}{2}(\eta+\alpha)^+}\|F(\sigma,\varphi(\sigma)) - F(\sigma,\psi(\sigma))\|_{L^p(\Omega;\dot{\mathbb{H}}^{-\alpha})}\,d\sigma\\
    \nonumber&\leq CL_1\int_0^t (t-\sigma)^{-\frac{1}{2}(\eta+\alpha)^+}\sigma^{-\hat{\alpha}}\|\varphi(\sigma) - \psi(\sigma)\|_{L^p(\Omega;\dot{\mathbb{H}}^{\nu})}\,d\sigma\\
    \label{eq:contr_F_est_1}&\leq CL_1 \|\varphi - \psi\|_{\mathbb{V}_p,\gamma}\int_0^t (t-\sigma)^{-\frac{1}{2}(\eta+\alpha)^+}\sigma^{-\hat{\alpha}-\lambda}\,e^{\gamma\sigma} \,d\sigma
\end{align}
From the above estimate \eqref{eq:contr_F_est_1} with particular values of $\eta=\mu$ and $\eta=\nu$, we obtain
\begin{align}
    \nonumber&e^{-\gamma t}\big(\|(\mathcal{J}_F\varphi)(t)-(\mathcal{J}_F\psi)(t)\|_{L^p(\Omega;\dot{\mathbb{H}}^\mu)} + t^\lambda\|(\mathcal{J}_F\varphi)(t)-(\mathcal{J}_F\psi)(t)\|_{L^p(\Omega;\dot{\mathbb{H}}^\nu)}\big)\\
    \nonumber&\leq CL_1 \|\varphi - \psi\|_{\mathbb{V}_p,\gamma}\Big(\int_0^t (t-\sigma)^{-\frac{1}{2}(\mu+\alpha)^+}\sigma^{-\hat{\alpha}-\lambda}\,e^{-\gamma(t-\sigma)}\,d\sigma + t^\lambda \int_0^t (t-\sigma)^{-\frac{1}{2}(\nu+\alpha)^+}\sigma^{-\hat{\alpha}-\lambda}\,e^{-\gamma(t-\sigma)}\,d\sigma\Big), 
\end{align}
and as a consequence
\begin{align}
    \label{eq:contr_operator_J_F}&\|\mathcal{J}_F \varphi - \mathcal{J}_F \psi\|_{\mathbb{V}_p,\gamma}\leq K_F(\gamma) \|\varphi - \psi\|_{\mathbb{V}_p,\gamma},
\end{align}
where \(K_F(\gamma)\coloneqq CL_1\sup_{t\in(0,T]}\Big(\int_0^t (t-\sigma)^{-\frac{1}{2}(\mu+\alpha)^+}\sigma^{-\hat{\alpha}-\lambda}\,e^{-\gamma(t-\sigma)}\,d\sigma + t^\lambda \int_0^t (t-\sigma)^{-\frac{1}{2}(\nu+\alpha)^+}\sigma^{-\hat{\alpha}-\lambda}\,e^{-\gamma(t-\sigma)}\,d\sigma\Big)\). Since \(1-\hat{\alpha} -\lambda-\frac{1}{2}(\mu+\alpha)^+>0\) and \(1-\hat{\alpha}-\frac{1}{2}(\nu+\alpha)^+>0\), there exists a \(0<p<\infty\) such that \(\frac{1}{p}-\hat{\alpha} -\lambda-\frac{1}{2}(\mu+\alpha)^+>0\) and \(\frac{1}{p}-\hat{\alpha}-\frac{1}{2}(\nu+\alpha)^+>0\), and hence an appeal to the H\"older's inequality yields the following estimate for \(K_F(\gamma)\):
\begin{align}
    \nonumber K_F(\gamma) &\leq \frac{CL_1}{(q\gamma)^{1/q}}\sup_{t\in(0,T]}\left(\Big(\int_0^t (t-\sigma)^{-\frac{p}{2}(\mu+\alpha)^+}\sigma^{-p\hat{\alpha}-p\lambda}\,d\sigma\Big)^{1/p} + \Big(t^{p\lambda} \int_0^t (t-\sigma)^{-\frac{p}{2}(\nu+\alpha)^+}\sigma^{-p\hat{\alpha}-p\lambda}\,d\sigma\Big)^{1/p}\right)\\
    \nonumber&\leq\frac{CL_1}{(q\gamma)^{1/q}}\Big(T^{\frac{1}{p}-\frac{1}{2}(\mu+\alpha)^+ - \hat{\alpha} - \lambda}\,\Big(B\big(1-\frac{p}{2}(\mu+\alpha)^+,1-p\hat{\alpha} - p\lambda\big)\Big)^{1/p}\\
    \nonumber&\hspace{5cm}+ T^{\frac{1}{p}-\frac{1}{2}(\nu+\alpha)^+ - \hat{\alpha} }\,\Big(B\big(1-\frac{p}{2}(\nu+\alpha)^+,1-p\hat{\alpha} - p\lambda\big)\Big)^{1/p}\Big).
\end{align}
Thus there exists a \(\gamma_1^\ast>0\) such that \(K_F(\gamma)\leq1/4\quad\forall\,\gamma\geq\gamma_1^\ast\). Applying this estimate in \eqref{eq:contr_operator_J_F} yields:
\begin{align}
    \label{eq:contr_operator_J_F_final}&\|\mathcal{J}_F \varphi - \mathcal{J}_F \psi\|_{\mathbb{V}_p,\gamma}\leq \frac{1}{4} \|\varphi - \psi\|_{\mathbb{V}_p,\gamma}\quad \forall\,\gamma\geq\gamma_1^\ast.
\end{align}
Now, for \((\vartheta+\beta)^+<1\) and \(0<t\leq T\), using Lemma~\ref{lem:BDG} (Burkholder--Davis--Gundy inequality) together with Minkowski's integral inequality, Lemma~\ref{lem:semigroup_estimates} and Assumptions~\ref{ass:global_lips_nonlin}, we obtain:
\begin{align}
    \nonumber \|(\mathcal{J}_G\varphi)(t)-(\mathcal{J}_G\psi)(t)\|_{L^p(\Omega;\dot{\mathbb{H}}^\vartheta)}^2 &\leq C^2\int_0^t (t-\sigma)^{-(\vartheta+\beta)^+}\|G(\sigma,\varphi(\sigma)) - G(\sigma,\psi(\sigma))\|_{L^p(\Omega;HS(U_0;\dot{\mathbb{H}}^{-\beta}))}^2\,d\sigma\\
    \nonumber&\leq C^2L_3^2\int_0^t (t-\sigma)^{-(\vartheta+\beta)^+}\sigma^{-2\hat{\beta}}\|\varphi(\sigma) - \psi(\sigma)\|_{L^p(\Omega;\dot{\mathbb{H}}^{\nu})}^2\,d\sigma\\
    \label{eq:contr_G_est_1}&\leq C^2L_3^2 \|\varphi - \psi\|_{\mathbb{V}_p,\gamma}^2\int_0^t (t-\sigma)^{-(\vartheta+\beta)^+}\sigma^{-2\hat{\beta}-2\lambda}\,e^{2\gamma\sigma} \,d\sigma
\end{align}
Setting particular values of $\vartheta=\mu$ and $\vartheta=\nu$ in \eqref{eq:contr_G_est_1} yields:
\begin{align}
    \nonumber&e^{-\gamma t}\big(\|(\mathcal{J}_G\varphi)(t)-(\mathcal{J}_G\psi)(t)\|_{L^p(\Omega;\dot{\mathbb{H}}^\mu)} + t^\lambda\|(\mathcal{J}_G\varphi)(t)-(\mathcal{J}_G\psi)(t)\|_{L^p(\Omega;\dot{\mathbb{H}}^\nu)}\big)\leq  K_G(\gamma)\,\|\varphi - \psi\|_{\mathbb{V}_p,\gamma},
\end{align}
where
\begin{align*}
    K_G(\gamma)\coloneqq&CL_3\sup_{t\in(0,T]}\Big(\Big(\int_0^t (t-\sigma)^{-(\mu+\beta)^+}\sigma^{-2\hat{\beta}-2\lambda}\,e^{-2\gamma(t-\sigma)}\,d\sigma \Big)^{1/2}\\
    &\hspace{2cm}+t^\lambda \Big(\int_0^t (t-\sigma)^{-(\nu+\beta)^+}\sigma^{-2\hat{\beta}-2\lambda}\,e^{-2\gamma(t-\sigma)}\,d\sigma\Big)^{1/2}\Big).
\end{align*}
Under the Assumption~\ref{ass:parameters}, following the approach of estimation of \(K_F(\gamma)\), we can find a sufficient large number \(\gamma_2^\ast>0\) such that
\[K_G(\gamma)\leq\frac{1}{4}\quad\forall\,\gamma\geq\gamma_2^\ast.\]
Thus,
\begin{align}
    \label{eq:contr_operator_J_G_final}&\|\mathcal{J}_G \varphi - \mathcal{J}_G \psi\|_{\mathbb{V}_p,\gamma}\leq \frac{1}{4} \|\varphi - \psi\|_{\mathbb{V}_p,\gamma}\quad\forall \gamma\geq\gamma_2^{\ast}.
\end{align}
\noindent
Finally, the integral equation \eqref{eq:operator_J_def} combined with the estimates \eqref{eq:contr_operator_J_F_final} and \eqref{eq:contr_operator_J_G_final} yields:
\begin{align}
    \label{eq:contr_operator_J_estimate}&\|\mathcal{J}\varphi-\mathcal{J}\psi\|_{\mathbb{V}_p,\gamma}
\le
\frac{1}{2}\,
\|\varphi-\psi\|_{\mathbb{V}_p,\gamma}\quad\forall\, \varphi,\psi\in \mathbb{V}_p,\;\gamma\geq \gamma^\ast\coloneqq\max\{\gamma_1^\ast, \gamma_2^\ast\},
\end{align}
that is, the mapping $\mathcal{J}:\mathbb{V}_p\to \mathbb{V}_p$ is a strict contraction with respect to
the norm $\|\cdot\|_{\mathbb{V}_p,\gamma^\ast}$.
By the Banach fixed point theorem, $\mathcal{J}$ admits a unique fixed point
$X\in \mathbb{V}_p$, which is the unique mild solution of the problem \eqref{eq:SEE}.

\noindent
\paragraph{A priori bound.}
Let $X = \mathcal{J}X \in \mathbb{V}_p$ be the unique mild solution of the problem \eqref{eq:SEE} obtained above. Then the estimate \eqref{eq:contr_operator_J_estimate} yields:
\begin{align}
    \label{eq:aprior_bound1}& \|X\|_{\mathbb{V}_p,\gamma^\ast} = \|\mathcal{J}X\|_{\mathbb{V}_p,\gamma^\ast} \leq \|\mathcal{J}X - \mathcal{J}\boldsymbol{0}\|_{\mathbb{V}_p,\gamma^\ast}  + \|\mathcal{J}\boldsymbol{0}\|_{\mathbb{V}_p,\gamma^\ast} \leq \frac{1}{2}\|X\|_{\mathbb{V}_p,\gamma^\ast} + \|\mathcal{J}\boldsymbol{0}\|_{\mathbb{V}_p}. 
\end{align}
Now, applying the Lemma~\ref{lem:semigroup_estimates}\((i)\) and the estimates \eqref{eq:det_w_norm_bound} and \eqref{eq:stoc_w_norm_bound} of Lemma~\ref{lem:convolutions_regularity}, we obtain:
\begin{align}
    \nonumber \|\mathcal{J}\boldsymbol{0}\|_{\mathbb{V}_p} &\leq \|S(\cdot)X_0\|_{\mathbb{V}_p} + \|\mathcal{J}_F\boldsymbol{0}\|_{\mathbb{V}_p} + \|\mathcal{J}_G\boldsymbol{0}\|_{\mathbb{V}_p}\\
    \label{eq:aprior_bound2}&\leq C(1+\|X_0\|_{\dot{\mathbb{H}}^\mu}).
\end{align}
Finally, by utilizing the equivalence of \(\|\cdot\|_{\mathbb{V}_p,\gamma^\ast}\) and \(\|\cdot\|_{\mathbb{V}_p}\) norms and combining the above estimates \eqref{eq:aprior_bound1} and \eqref{eq:aprior_bound2}, we obtain the desired a priori estimate \eqref{eq:a_priori_estimate}. 


\noindent \textit{\textbf{Proof of Theorem~\ref{thm:main_global_existence}\textup{(ii)} - Continuous dependence on the initial data.}}
Let $X, Y \in \mathbb{V}_p$ be the unique mild solutions corresponding to the initial data $X_0$ and $Y_0$, respectively. Then
\begin{equation*}
X(t) - Y(t) = S(t)(X_0 - Y_0) + (\mathcal{J}_F X)(t) - (\mathcal{J}_F Y)(t) + (\mathcal{J}_G X)(t) - (\mathcal{J}_G Y)(t).
\end{equation*}
An appeal to Lemma~\ref{lem:semigroup_estimates} and the estimates \eqref{eq:contr_operator_J_F_final} and \eqref{eq:contr_operator_J_G_final} yields:
\begin{align}
    & \|X - Y\|_{\mathbb{V}_p,\gamma^\ast} \leq C\|X_0 - Y_0\|_{L^p(\Omega;\dot{\mathbb{H}}^{\mu})} +\frac{1}{2}\|X - Y\|_{\mathbb{V}_p,\gamma^\ast}
\end{align}
Finally, by applying the equivalence of the norms \(\|\cdot\|_{\mathbb{V}_p,\gamma^\ast}\) and \(\|\cdot\|_{\mathbb{V}_p}\), we obtain the desired estimate \eqref{eq:cont_dep_ini_data} which completes the proof of Theorem~\ref{thm:main_global_existence}(ii).

\noindent
\textit{\textbf{Proof of Theorem~\ref{thm:main_global_existence}\textup{(iii)} -- Temporal regularity.}}
To establish Hölder temporal regularity results, consider the following shifted integral equation satisfied by the mild solution \(X\) of \eqref{eq:SEE}: for \(0\leq t_0<t\leq T\),
\begin{align}\label{eq:shifted_int_eq}
 &X(t) = S(t-t_0)X(t_0) + (\mathcal{J}_F^{t_0} X)(t) + (\mathcal{J}_G^{t_0} X)(t)\quad\text{in }\dot{\mathbb{H}}^\mu\;\mathbb{P}\text{-a.s.}  
\end{align}
For any \(0<a<T\) and \(0<\epsilon\leq 1\), an appeal to Lemma~\ref{lem:semigroup_estimates}(iii) and the solution bound \eqref{eq:a_priori_estimate} in Theorem~\ref{thm:main_global_existence}(i) yields:
\begin{align}
    \nonumber\|S(t-a/2)X(a/2) - S(s-a/2)X(a/2)\|_{L^p(\Omega;\dot{\mathbb{H}}^\nu)}&\leq C(a/2)^{-(\delta+\lambda)}|t-s|^\delta (1+\|X_0\|_{L^p(\Omega;\dot{\mathbb{H}}^\mu)})\;\forall\,s,t\in[a,T],\;\text{and}\\
   \nonumber  \|S(t)X_0 - S(s)X_0\|_{L^p(\Omega;\dot{\mathbb{H}}^\mu)} &\leq C |t-s|^{\epsilon/2} (1+\|X_0\|_{L^p(\Omega;\dot{\mathbb{H}}^{\mu+\epsilon})})\;\forall\,s,t\in[0,T]
\end{align}
for all \(0<\delta<\min\big\{1-\frac{1}{2}(\nu+\alpha)^+, \frac{1}{2}-\frac{1}{2}(\nu+\beta)^+\big\}\), where \(\lambda=(\nu-\mu)/2\). Consequently,
\begin{align}
  \label{eq:initial_local_temp_holder}\|S(\cdot - a/2)X(a/2)\|_{\mathcal{C}^{0,\delta}([a,T];L^p(\Omega;\dot{\mathbb{H}}^\nu))}  &\leq C(a/2)^{-(\delta+\lambda)}(1+\|X_0\|_{L^p(\Omega;\dot{\mathbb{H}}^\mu)}),\quad\text{and}\\
  \label{eq:initial_global_temp_holder} \|S(\cdot )X_0\|_{\mathcal{C}^{0,\epsilon/2}([0,T];L^p(\Omega;\dot{\mathbb{H}}^\mu))}  &\leq C(1+\|X_0\|_{L^p(\Omega;\dot{\mathbb{H}}^{\mu+\epsilon})}).
\end{align}
After setting \(t_0=a/2\) in \eqref{eq:shifted_int_eq}, an appeal to \eqref{eq:initial_local_temp_holder} and the estimates \eqref{eq:det_con_local} and \eqref{eq:sto_con_local} of Lemma~\ref{lem:convolutions_regularity}, and solution bound \eqref{eq:a_priori_estimate}, yields: for any \(0<\delta<\min\big\{1-\frac{1}{2}(\nu+\alpha)^+,\frac{1}{2}-\frac{1}{2}(\nu+\beta)^+\big\}\),
\begin{align*}
    &X\in \mathcal{C}^{0,\delta}([a,T];L^p(\Omega;\dot{\mathbb{H}}^\nu)) \quad\text{with }\; \|X\|_{\mathcal{C}^{0,\delta}([a,T];L^p(\Omega;\dot{\mathbb{H}}^\nu))}\leq C_a(1+\|X_0\|_{L^p(\Omega;\dot{\mathbb{H}}^\mu)}).
\end{align*}
By setting \(t_0=0\) in \eqref{eq:shifted_int_eq}, and applying the estimate \eqref{eq:initial_global_temp_holder} along with the estimates \eqref{eq:det_con_global} and \eqref{eq:sto_con_global} of Lemma~\ref{lem:convolutions_regularity}, and solution bound \eqref{eq:a_priori_estimate}, we obtain, for any \(0<\delta_\epsilon < \min\big\{\frac{\epsilon}{2}, 1-\hat{\alpha} - \lambda -\frac{1}{2}(\mu+\alpha)^+,\frac{1}{2}-\hat{\beta} - \lambda -\frac{1}{2}(\mu+\beta)^+\big\}\),
\begin{align*}
    &X\in \mathcal{C}^{0,\delta_\epsilon}([0,T];L^p(\Omega;\dot{\mathbb{H}}^\mu)) \quad\text{with }\; \|X\|_{\mathcal{C}^{0,\delta_\epsilon}([0,T];L^p(\Omega;\dot{\mathbb{H}}^\mu))}\leq C(1+\|X_0\|_{L^p(\Omega;\dot{\mathbb{H}}^{\mu+\epsilon})}).
\end{align*}
This completes the proof.
\end{proof}
\subsection{Proof of Theorem~\ref{thm:pathwise_regularity}}
We employ the factorization formula introduced by Da Prato, Kwapie\'n, and Zabczyk \cite{MR920798} to establish the pathwise regularity of the deterministic and the stochastic convolutions. Before starting the proof of the Theorem~\ref{thm:pathwise_regularity}, we introduce some auxiliary results. First we introduce the following lemma that generalizes Proposition~4.1 in Hong and Liu~\cite{MR3912731} by unifying their three parameter regimes into a single cohesive condition via $(\eta-\rho)^+$. Moreover, it recovers the exact, optimal H\"older exponent, eliminating the loss of regularity in the boundary case.

\begin{lemma}\label{lem:det_smoothing}
Let $\{S(t)\}_{t \ge 0}$ be the analytic $\mathcal{C}_0$-semigroup generated by $-A$. For any $0 \leq t_0 < T$, $0 < \theta \leq 1$, $1 < q \leq \infty$, and $\eta, \rho \in \mathbb{R}$ satisfying $\theta > \frac{1}{q} + \frac{1}{2}(\eta-\rho)^+$, the fractional convolution operator
$$
(R_\theta \psi)(t) := \int_{t_0}^t (t-\sigma)^{\theta-1} S(t-\sigma) \psi(\sigma) \, d\sigma, \quad t \in [t_0, T],
$$
defines a bounded linear mapping $R_\theta \in \mathcal{L}\left(L^q([t_0,T]; \dot{\mathbb{H}}^\rho), \mathcal{C}^{0,\delta}([t_0,T]; \dot{\mathbb{H}}^\eta)\right)$, where $\delta := \theta - \frac{1}{q} - \frac{1}{2}(\eta-\rho)^+$.
\end{lemma}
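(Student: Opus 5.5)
We need to bound $\|(R_\theta\psi)(t)\|_{\dot{\mathbb H}^\eta}$ uniformly in $t$, and the increments $\|(R_\theta\psi)(t)-(R_\theta\psi)(s)\|_{\dot{\mathbb H}^\eta}$ by $C|t-s|^\delta\|\psi\|_{L^q([t_0,T];\dot{\mathbb H}^\rho)}$. Throughout set $\kappa:=\frac12(\eta-\rho)^+$ and let $q'$ denote the conjugate exponent. The basic tool is Lemma~\ref{lem:semigroup_estimates}(i) with $\gamma=(\eta-\rho)/2$:
\[
\|A^{\eta/2}S(r)\psi\|_{\mathbb H}\le C r^{-\kappa}\|\psi\|_{\dot{\mathbb H}^\rho}.
\]

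\textbf{Sup bound.} By this estimate and H\"older's inequality,
\[
\|(R_\theta\psi)(t)\|_{\dot{\mathbb H}^\eta}\le C\Big(\int_{t_0}^t (t-\sigma)^{(\theta-1-\kappa)q'}d\sigma\Big)^{1/q'}\|\psi\|_{L^q}.
\]
The integral is finite precisely when $(\theta-1-\kappa)q'>-1$, i.e. $\theta-\kappa>1/q$, which is our hypothesis. It then equals $C(t-t_0)^{\delta}$ with $\delta=\theta-\frac1q-\kappa$. For $q=\infty$ one takes $q'=1$ and the same computation goes through.

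\textbf{Increment.} For $t_0\le s<t\le T$, set $h=t-s$ and split the difference into three pieces:
\[
I_1=\int_s^t(t-\sigma)^{\theta-1}S(t-\sigma)\psi\,d\sigma,
\]
\[
I_2=\int_{t_0}^s\big[(t-\sigma)^{\theta-1}-(s-\sigma)^{\theta-1}\big]S(t-\sigma)\psi\,d\sigma,
\]
\[
I_3=\int_{t_0}^s(s-\sigma)^{\theta-1}\big[S(t-\sigma)-S(s-\sigma)\big]\psi\,d\sigma.
\]

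\emph{Term $I_1$.} This is handled exactly like the sup bound and gives $Ch^\delta$.

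\emph{Term $I_2$.} Use $\|A^{\eta/2}S(t-\sigma)\|\le C(t-\sigma)^{-\kappa}\le C(s-\sigma)^{-\kappa}$ and the fact that $\theta\le1$, so the kernel difference is nonnegative in absolute value, namely $(s-\sigma)^{\theta-1}-(t-\sigma)^{\theta-1}\ge0$. After H\"older, one must bound
\[
\int_0^{\infty}\big[r^{\theta-1}-(r+h)^{\theta-1}\big]^{q'}r^{-\kappa q'}\,dr.
\]
Substituting $r=hu$ gives $h^{(\theta-1-\kappa)q'+1}$ times $\int_0^\infty[u^{\theta-1}-(u+1)^{\theta-1}]^{q'}u^{-\kappa q'}du$. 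This last integral converges: near $0$ by the hypothesis $\theta-\kappa>1/q$, and near $\infty$ because the integrand behaves like $u^{(\theta-2-\kappa)q'}$ with $(\theta-2-\kappa)q'<-1$. Hence $I_2$ is bounded by $Ch^{\delta}$.

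\emph{Term $I_3$.} This is the main obstacle, since one must avoid losing an $\varepsilon$ of regularity. A crude use of Lemma~\ref{lem:semigroup_estimates}(iii) with a fixed $\rho'$ gives the factor $(s-\sigma)^{-\kappa-\rho'}h^{\rho'}$ and forces $\rho'<\delta$. To reach the exact exponent, split the domain at $\sigma=s-h$.

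On the near part $\sigma\in(s-h,s)$, bound each semigroup separately by $(s-\sigma)^{-\kappa}$. This is the same computation as for $I_1$ and yields $Ch^\delta$.

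On the far part $\sigma<s-h$, write
\[
S(t-\sigma)-S(s-\sigma)=-\int_{s-\sigma}^{t-\sigma}AS(r)\,dr,
\]
which gives $\|A^{\eta/2}(\cdot)\|\le Ch(s-\sigma)^{-\kappa-1}$ when $\eta\ge\rho$. When $\eta<\rho$ one uses $\|A^{(\eta-\rho)/2}AS(r)\|\le Cr^{-1}$, i.e. $\kappa=0$. After H\"older, this leaves
\[
h\Big(\int_h^\infty r^{(\theta-2-\kappa)q'}dr\Big)^{1/q'}=Ch\cdot h^{\theta-1-\kappa-1/q}=Ch^\delta,
\]
which converges because $\theta-1-\kappa-\frac1q<0$. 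When $\theta-\kappa-\frac1q$ would exceed $1$, this is impossible: $\theta\le1$ and $q>1$ give $\delta<1$.

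\textbf{Conclusion.} Linearity of $R_\theta$ is clear. Combining the sup bound with the increment bound gives the stated boundedness into $\mathcal C^{0,\delta}([t_0,T];\dot{\mathbb H}^\eta)$, and continuity of the map follows from the H\"older estimate itself.
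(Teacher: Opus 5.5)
Your route is essentially the paper's. You use the same H\"older sup bound and the same three-term splitting: your $I_1,I_2,I_3$ are the paper's $J_1,J_2,J_3$. You also use the same representation $S(t-\sigma)-S(s-\sigma)=-\int_{s-\sigma}^{t-\sigma}AS(r)\,dr$ to reach the exact exponent in $I_3$. The differences are cosmetic. For $I_2$ the paper uses $(b-a)^{q'}\le b^{q'}-a^{q'}$ and integrates exactly rather than rescaling. For $I_3$ it rescales the whole double integral into (in your notation) $\int_0^\infty\bigl(s^{\theta-1}\int_s^{s+1}\tau^{-1-\kappa}\,d\tau\bigr)^{q'}ds$ instead of splitting at $\sigma=s-h$. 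Your split just exhibits the two regimes of that integral.

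There is one false step, and it coincides with a defect in the statement itself. Your claim that $\theta\le 1$ and $q>1$ give $\delta<1$ fails for $q=\infty$. There $1/q=0$, and $\theta=1$, $\eta\le\rho$ give $\kappa=0$ and $\delta=1$. Your far-part bound then becomes $h\int_h^\infty r^{-1}\,dr=\infty$. The paper's rescaled integral becomes $\int_0^\infty\log(1+1/s)\,ds=\infty$ for the same reason.

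For $\eta=\rho$ this cannot be repaired, because the asserted Lipschitz bound would amount to maximal $L^\infty$-regularity. Concretely, take $\eta=\rho=0$ and fix $t_0<t^*<T$. Pick eigenvectors $e_{n_k}$ of $A$ with eigenvalues $\ell_k^{-1}$, where $\ell_{k+1}\le\ell_k/2$ and $2\ell_1<t^*-t_0$, and set $\psi_K:=\sum_{k\le K}\mathbf{1}_{[t^*-2\ell_k,\,t^*-\ell_k)}e_{n_k}$.
\begin{itemize}
\item The supports are disjoint, so $\|\psi_K\|_{L^\infty(t_0,T;\mathbb{H})}\le 1$.
\item $u_K:=R_1\psi_K$ satisfies $u_K(t^*+h)=S(h)u_K(t^*)$ for $h>0$.
\item $(Au_K(t^*),e_{n_k})_{\mathbb{H}}=e^{-1}-e^{-2}$ for every $k\le K$.
\end{itemize}
Hence the Lipschitz constant of $u_K$ is at least $\|Au_K(t^*)\|_{\mathbb{H}}=\sqrt{K}\,(e^{-1}-e^{-2})\to\infty$, so no bound of the claimed form holds.

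For $\eta<\rho$ the case $\delta=1$ is true. It requires keeping the actual decay $\|A^{1+(\eta-\rho)/2}S(r)\|_{\mathcal{L}(\mathbb{H})}\le Cr^{-(1+(\eta-\rho)/2)^+}$, which is integrable at $0$. Replacing it by $Cr^{-1}$, as both you and the paper do, discards exactly what is needed.

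The clean fix is to add the hypothesis $\delta<1$, which is automatic if $q<\infty$ or $\theta<1$. This covers every use of the lemma in the paper, where $q=p<\infty$ and $\theta<1$. With that restriction your proof is complete.
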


\begin{proof}
Let $\psi \in L^q(t_0, T; \dot{\mathbb{H}}^\rho)$.  
We first establish spatial boundedness. Applying Lemma~\ref{lem:semigroup_estimates}(i) and Hölder's inequality with conjugate exponent $q' = q/(q-1)$, we obtain
\begin{align*}
    \|(R_\theta \psi)(t)\|_{\dot{\mathbb{H}}^\eta} 
    &\le \int_{t_0}^t (t-\sigma)^{\theta-1} \Big\| A^{\frac{1}{2}(\eta-\rho)} S(t-\sigma) \Big\|_{\mathcal{L}(\mathbb{H})} \|\psi(\sigma)\|_{\dot{\mathbb{H}}^\rho} \, d\sigma \\
    &\le C \int_{t_0}^t (t-\sigma)^{\theta-1 - \frac{1}{2}(\eta-\rho)^+} \|\psi(\sigma)\|_{\dot{\mathbb{H}}^\rho} \, d\sigma.\\
    &\le C \left( \int_{t_0}^t (t-\sigma)^{\left(\theta-1-\frac{1}{2}(\eta-\rho)^+\right)q'} d\sigma \right)^{\frac{1}{q'}} \|\psi\|_{L^q(t_0,T; \dot{\mathbb{H}}^\rho)},
\end{align*}
where, under the hypothesis $\theta > \frac{1}{q} + \frac{1}{2}(\eta-\rho)^+$, the time integral converges  for all \(t\in[t_0,T],\;t_0\geq 0\). Thus $\sup_{t \in [t_0,T]} \|(R_\theta \psi)(t)\|_{\dot{\mathbb{H}}^\eta} \le C \|\psi\|_{L^q(t_0,T; \dot{\mathbb{H}}^\rho)}$.

To establish Hölder continuity, let $t_0 \le t_1 < t_2 \le T$ and set $h \coloneqq t_2 - t_1$. We decompose the increment of the operator into three distinct parts:
\begin{align*}
    (R_\theta \psi)(t_2) - (R_\theta \psi)(t_1) 
    &= \int_{t_1}^{t_2} (t_2-\sigma)^{\theta-1} S(t_2-\sigma)\psi(\sigma) \, d\sigma \\
    &\quad + \int_{t_0}^{t_1} \big[(t_2-\sigma)^{\theta-1} - (t_1-\sigma)^{\theta-1}\big] S(t_2-\sigma) \psi(\sigma) \, d\sigma \\
    &\quad + \int_{t_0}^{t_1} (t_1-\sigma)^{\theta-1} \big[S(t_2-\sigma) - S(t_1-\sigma)\big] \psi(\sigma) \, d\sigma \\
    &\eqqcolon J_1 + J_2 + J_3.
\end{align*}
\smallskip\noindent\textit{Estimate for $J_1$:}
Proceeding exactly as in the spatial bound, direct integration gives
\[
    \|J_1\|_{\dot{\mathbb{H}}^\eta} \le C \left( \int_{t_1}^{t_2} (t_2-\sigma)^{\left(\theta-1-\frac{1}{2}(\eta-\rho)^+\right)q'} d\sigma \right)^{\frac{1}{q'}} \|\psi\|_{L^q(t_0,T; \dot{\mathbb{H}}^\rho)} \le C h^\delta \|\psi\|_{L^q(t_0,T; \dot{\mathbb{H}}^\rho)}.
\]

\smallskip\noindent\textit{Estimate for $J_2$:}
If $\theta = 1$, $J_2$ vanishes identically. For $0 < \theta < 1$, applying H\"older's inequality alongside the monotonicity $(t_2-\sigma)^{-\frac{1}{2}(\eta-\rho)^+ q'} \le (t_1-\sigma)^{-\frac{1}{2}(\eta-\rho)^+ q'}$ (for $\sigma< t_1 < t_2$) and the elementary algebraic inequality $(b-a)^p \le b^p - a^p$ (valid for $p \ge 1$ and $b \ge a \ge 0$), we obtain:
\begin{align*}
    \|J_2\|_{\dot{\mathbb{H}}^\eta}^{q'} 
    &\le C \|\psi\|_{L^q(t_0,T; \dot{\mathbb{H}}^\rho)}^{q'} \int_{t_0}^{t_1} \Big[ (t_1-\sigma)^{(\theta-1)q'} - (t_2-\sigma)^{(\theta-1)q'} \Big] (t_2-\sigma)^{-\frac{1}{2}(\eta-\rho)^+ q'} \, d\sigma \\
    &\le C \|\psi\|_{L^q(t_0,T; \dot{\mathbb{H}}^\rho)}^{q'} \int_{t_0}^{t_1} \Big[ (t_1-\sigma)^{\kappa} - (t_2-\sigma)^{\kappa} \Big] \, d\sigma,
\end{align*}
where $\kappa := (\theta-1-\frac{1}{2}(\eta-\rho)^+)q' > -1$. Evaluating the integral exactly yields $\frac{1}{\kappa+1} \big[ (t_1-t_0)^{\kappa+1} - (t_2-t_0)^{\kappa+1} + (t_2-t_1)^{\kappa+1} \big]$. Since $\kappa + 1 > 0$ and $t_1 < t_2$, it follows that $(t_1-t_0)^{\kappa+1} - (t_2-t_0)^{\kappa+1} < 0$. Bounding the bracketed term by $(t_2-t_1)^{\kappa+1} = h^{\kappa+1}$, taking the $q'$-th root, and noting that $\frac{\kappa+1}{q'} = \delta$, we conclude $\|J_2\|_{\dot{\mathbb{H}}^\eta} \le C h^\delta \|\psi\|_{L^q(t_0,T; \dot{\mathbb{H}}^\rho)}$.

\smallskip\noindent\textit{Estimate for $J_3$:}
Using the identity $S(t_2-\sigma) - S(t_1-\sigma) = -\int_{t_1-\sigma}^{t_2-\sigma} A S(u) \, du$, along with Lemma~\ref{lem:semigroup_estimates}(i) and H\"older's inequality, we bound $\|J_3\|_{\dot{\mathbb{H}}^\eta}$ as follows:
\begin{align*}
    \|J_3\|_{\dot{\mathbb{H}}^\eta} 
    &\leq \int_{t_0}^{t_1} (t_1-\sigma)^{\theta-1} \int_{t_1-\sigma}^{t_2-\sigma} \big\| A^{1+\frac{1}{2}(\eta-\rho)} S(u) \big\|_{\mathcal{L}(\mathbb{H})} \|\psi(\sigma)\|_{\dot{\mathbb{H}}^\rho} \, du \, d\sigma \\
    &\leq C \int_{t_0}^{t_1} (t_1-\sigma)^{\theta-1} \left( \int_{t_1-\sigma}^{t_2-\sigma} u^{-1-\frac{1}{2}(\eta-\rho)^+} \, du \right) \|\psi(\sigma)\|_{\dot{\mathbb{H}}^\rho} \, d\sigma \\
    &\leq C \|\psi\|_{L^q(t_0,T;\dot{\mathbb{H}}^\rho)} \left( \int_{t_0}^{t_1} \left( (t_1-\sigma)^{\theta-1} \int_{t_1-\sigma}^{t_2-\sigma} u^{-1-\frac{1}{2}(\eta-\rho)^+} \, du \right)^{q^\prime} d\sigma \right)^{\frac{1}{q^\prime}}.
\end{align*}
Letting $h = t_2 - t_1$ and applying the substitutions $t_1 - \sigma = s h$ and $u = \tau h$, we deduce:
\begin{align*}
    \|J_3\|_{\dot{\mathbb{H}}^\eta} &\leq C h^{\theta - \frac{1}{q}-\frac{1}{2}(\eta-\rho)^+} \|\psi\|_{L^q(t_0,T;\dot{\mathbb{H}}^\rho)} \left( \int_{0}^{\frac{t_1-t_0}{h}} \left( s^{\theta-1} \int_{s}^{s+1} \tau^{-1-\frac{1}{2}(\eta-\rho)^+} \, d\tau \right)^{q^\prime} ds \right)^{\frac{1}{q^\prime}}\\
    &\leq C h^\delta \|\psi\|_{L^q(t_0,T;\dot{\mathbb{H}}^\rho)} \left( \int_0^\infty \left( s^{\theta-1} \int_{s}^{s+1} \tau^{-1-\frac{1}{2}(\eta-\rho)^+} \, d\tau \right)^{q^\prime} ds \right)^{\frac{1}{q^\prime}}
\end{align*}
Under the hypothesis $\theta > \frac{1}{q} + \frac{1}{2}(\eta-\rho)^+$, the infinite integral on the right-hand side converges. Finally, combining the estimates for $J_1$, $J_2$, and $J_3$ concludes the proof.
\end{proof}

We establish the pathwise H\"older regularity of the deterministic and the stochastic convolutions in the following lemma.


\begin{lemma} \label{lem:convolutions_pathwise_regularity}
    Let $X_0 \in L^p(\Omega,\mathcal{F}_0; \dot{\mathbb{H}}^\mu)$ with $p > 2$, and let $X$ be the mild solution to \eqref{eq:SEE}. Suppose Assumptions~\ref{ass:measurability_drift_diff} and~\ref{ass:global_lips_nonlin} hold subject to the parameters satisfying Assumption~\ref{ass:parameters}. For $0 \leq t_0 \leq t \leq T$, we define the shifted convolutions:
\begin{equation*}
    (\mathcal{J}_F^{t_0}X)(t) \coloneqq \int_{t_0}^t S(t-s)F(s,X(s))\,ds \quad \text{and} \quad (\mathcal{J}_G^{t_0}X)(t) \coloneqq \int_{t_0}^t S(t-s)G(s,X(s))\,dW(s),
\end{equation*}
with $ \mathcal{J}_F \coloneqq \mathcal{J}_F^0$ and $\mathcal{J}_G \coloneqq \mathcal{J}_G^0$. Then, there exist positive constants $C$ and $C_a$ (where $C_a$ may diverge as $a \searrow 0$) such that these convolutions admit versions (which we still denote using the same notation) satisfying the following regularity assertions:
\begin{enumerate}[label={\upshape(\roman*)}]
    \item \textbf{\textit{Deterministic convolution:}} If $(\eta+\alpha)^+ < \min\left\{2 - \frac{2}{p}, 2 - 2\hat{\alpha} - 2\lambda\right\}$ and $0 < \delta < \min\big\{1 - \hat{\alpha} - \lambda - \frac{1}{2}(\eta+\alpha)^+, 1 - \frac{1}{p} - \frac{1}{2}(\eta+\alpha)^+\big\}$, then $\mathcal{J}_F(X) \in L^p\left(\Omega; \mathring{\mathcal{C}}^{0, \delta}([0, T]; \dot{\mathbb{H}}^\eta)\right)$, and the following global estimate holds:
    \begin{equation}\label{eq:det_con_global_pathwise}
        \|\mathcal{J}_FX\|_{L^p(\Omega;\mathring{\mathcal{C}}^{0, \delta}([0,T];\dot{\mathbb{H}}^{\eta}))} \leq C\left(1+\|X_0\|_{L^p(\Omega;\dot{\mathbb{H}}^{\mu})}\right).
    \end{equation}
    Furthermore, for any $a > 0$, under the relaxed constraints $(\eta+\alpha)^+ < 2 - \frac{2}{p}$ and $0 < \delta < 1 - \frac{1}{p} - \frac{1}{2}(\eta+\alpha)^+$, we obtain the local bound:
    \begin{equation}\label{eq:det_con_local_pathwise}
        \|\mathcal{J}_F^aX\|_{L^p(\Omega;\mathring{\mathcal{C}}^{0, \delta}([a,T];\dot{\mathbb{H}}^{\eta}))} \leq C_a\left(1+\|X_0\|_{L^p(\Omega;\dot{\mathbb{H}}^{\mu})}\right).
    \end{equation}

    \item \textbf{\textit{Stochastic convolution:}} If $(\eta+\beta)^+ < \min\left\{1 - \frac{2}{p}, 1 - 2\hat{\beta} - 2\lambda\right\}$ and $0 < \delta < \min\big\{\frac{1}{2} - \hat{\beta} - \lambda - \frac{1}{2}(\eta+\beta)^+, \frac{1}{2} - \frac{1}{p} - \frac{1}{2}(\eta+\beta)^+\big\}$, then $\mathcal{J}_G(X) \in L^p\left(\Omega; \mathring{\mathcal{C}}^{0, \delta}([0, T]; \dot{\mathbb{H}}^\eta)\right)$, satisfying the global estimate:
    \begin{equation}\label{eq:sto_con_global_pathwise}
        \|\mathcal{J}_GX\|_{L^p(\Omega;\mathring{\mathcal{C}}^{0, \delta}([0,T];\dot{\mathbb{H}}^{\eta}))} \leq C\left(1+\|X_0\|_{L^p(\Omega;\dot{\mathbb{H}}^{\mu})}\right).
    \end{equation}
    Similarly, for any $a > 0$, under the relaxed constraints $(\eta+\beta)^+ < 1 - \frac{2}{p}$ and $0 < \delta < \frac{1}{2} - \frac{1}{p} - \frac{1}{2}(\eta+\beta)^+$, we obtain the local bound:
    \begin{equation}\label{eq:sto_con_local_pathwise}
        \|\mathcal{J}_G^aX\|_{L^p(\Omega;\mathring{\mathcal{C}}^{0, \delta}([a,T];\dot{\mathbb{H}}^{\eta}))} \leq C_a\left(1+\|X_0\|_{L^p(\Omega;\dot{\mathbb{H}}^{\mu})}\right).
    \end{equation}
\end{enumerate}
\end{lemma}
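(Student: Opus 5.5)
The plan is to use the factorization method of Da Prato, Kwapie\'n and Zabczyk together with Lemma~\ref{lem:det_smoothing}. The deterministic part needs only H\"older's inequality in time; the stochastic part needs factorization plus Lemma~\ref{lem:BDG}. Throughout we use the a priori bound \eqref{eq:a_priori_estimate}, via Lemma~\ref{lem:Bounds_on_F_and_G}, in the form
\[
\|F(\sigma,X(\sigma))\|_{L^p(\Omega;\dot{\mathbb H}^{-\alpha})}\le C(1+\|X_0\|_{L^p(\Omega;\dot{\mathbb H}^\mu)})\,\sigma^{-(\hat\alpha+\lambda)},
\]
and analogously for $G$ with exponent $\hat\beta+\lambda$.

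\emph{Deterministic convolution.} We write $\mathcal{J}_F^{t_0}X=R_1\psi$ with $\theta=1$, $\rho=-\alpha$, $\psi(\sigma)=F(\sigma,X(\sigma))$. Lemma~\ref{lem:det_smoothing} then gives $\mathcal{C}^{0,\delta'}([t_0,T];\dot{\mathbb H}^\eta)$ with $\delta'=1-\frac1q-\frac12(\eta+\alpha)^+$, provided $\psi\in L^q(t_0,T;\dot{\mathbb H}^{-\alpha})$ pathwise.
\begin{itemize}
\item For the local bound ($t_0=a$) we take $q=p$. Since the weight $\sigma^{-(\hat\alpha+\lambda)}$ is bounded on $[a,T]$, Fubini gives $\mathbb E\int_a^T\|\psi\|^p\,d\sigma\le C_a(1+\|X_0\|)^p$. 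This yields the $L^p(\Omega)$ bound on the $\mathcal C^{0,\delta'}$ norm.
\item For the global bound ($t_0=0$) the weight is singular. We choose $q\le p$ with $q(\hat\alpha+\lambda)<1$. Then $\mathbb E\|\psi\|_{L^q(0,T)}^p\le C\big(\int_0^T(\mathbb E\|\psi(\sigma)\|^p)^{q/p}d\sigma\big)^{p/q}$ by Minkowski's inequality (since $p/q\ge1$), and this is finite.
\item The exponent constraint becomes $\delta<1-\frac1q-\frac12(\eta+\alpha)^+$ with $\frac1q$ arbitrarily close to $\max\{\frac1p,\hat\alpha+\lambda\}$. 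This is exactly the stated two-term minimum.
\end{itemize}
The little-H\"older membership follows because $\delta<\delta'$: a $\delta'$-H\"older function lies in $\mathring{\mathcal C}^{0,\delta}$, with norm controlled by $C\|\cdot\|_{\mathcal C^{0,\delta'}}$.

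\emph{Stochastic convolution.} Fix $\theta\in(0,\frac12)$ and use the factorization identity
\[
\mathcal{J}_G^{t_0}X(t)=\frac{\sin\pi\theta}{\pi}\,(R_\theta Y_\theta)(t),\qquad Y_\theta(s)=\int_{t_0}^s (s-\sigma)^{-\theta}S(s-\sigma)G(\sigma,X(\sigma))\,dW(\sigma).
\]
This holds a.s.\ for each $t$ by the stochastic Fubini theorem, justified by the $L^2$-integrability estimates below; the right-hand side then supplies the continuous version. By Lemma~\ref{lem:BDG}, Minkowski's inequality, and Lemma~\ref{lem:semigroup_estimates}(i), we get for $\kappa\le\eta$ (we take $\kappa=\eta$, so $\rho=\eta$ in Lemma~\ref{lem:det_smoothing})
\[
\|Y_\theta(s)\|_{L^p(\Omega;\dot{\mathbb H}^{\eta})}^2\le C(1+\|X_0\|)^2\int_{t_0}^s (s-\sigma)^{-2\theta-(\eta+\beta)^+}\sigma^{-2(\hat\beta+\lambda)}\,d\sigma .
\]
This is finite provided $2\theta+(\eta+\beta)^+<1$ and $2(\hat\beta+\lambda)<1$; on $[a,T]$ the second condition is not needed. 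The integral then behaves like $s^{\frac12-\theta-\frac12(\eta+\beta)^+-\hat\beta-\lambda}$, and like a constant on $[a,T]$.

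We then apply Lemma~\ref{lem:det_smoothing} with $\rho=\eta$, so the exponent is $\delta'=\theta-\frac1q$.
\begin{itemize}
\item Locally, take $q=p$; we need $\frac1p<\theta<\frac12-\frac12(\eta+\beta)^+$, which is possible exactly when $(\eta+\beta)^+<1-\frac2p$. This yields any $\delta<\frac12-\frac1p-\frac12(\eta+\beta)^+$.
\item Globally, we need $\mathbb E\|Y_\theta\|_{L^q(0,T)}^p<\infty$, i.e.\ $q\cdot\big(\theta+\frac12(\eta+\beta)^++\hat\beta+\lambda-\frac12\big)^+<1$ with $q\le p$. Optimizing $\theta$ and $q$ gives $\delta<\frac12-\frac12(\eta+\beta)^+-\max\{\frac1p,\hat\beta+\lambda\}$, which is the stated condition.
\end{itemize}

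\emph{Main obstacle.} The delicate point is the global case: balancing the time singularity $\sigma^{-(\hat\beta+\lambda)}$ (or $\sigma^{-(\hat\alpha+\lambda)}$) against the choice of $q<p$ and $\theta$, and checking that the exponents close without loss. Because the bound on $Y_\theta$ may itself be singular at $s=0$, we need $L^q$- rather than $L^p$-integrability in time, and Minkowski's inequality with $p/q\ge1$ to pass the $L^p(\Omega)$ norm inside.

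Measurability of the factorized process as a $\mathring{\mathcal C}^{0,\delta}$-valued random variable uses the separability of $\mathring{\mathcal C}^{0,\delta}$ noted earlier. Identifying it as a version of $\mathcal J_G X$ uses equality for each fixed $t$.
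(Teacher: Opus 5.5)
Your proposal is correct and reaches exactly the stated exponent ranges, but by a somewhat different route from the paper. The paper factorizes \emph{both} convolutions, with $0<\theta<1$ for $F$ and $0<\theta<\frac12$ for $G$. It estimates the auxiliary processes $F_\theta$, $G_\theta$ only in the rough spaces $\dot{\mathbb H}^{-\alpha}$, $\dot{\mathbb H}^{-\beta}$, and always applies Lemma~\ref{lem:det_smoothing} with $q=p$ and $\rho=-\alpha$ (resp.\ $\rho=-\beta$), so the spatial gain is paid inside that lemma. The singularity at $t=0$ is absorbed by restricting $\theta<\theta_F=\min\{1,1+\frac1p-\hat\alpha-\lambda\}$ (resp.\ $\theta<\theta_G=\min\{\frac12,\frac12+\frac1p-\hat\beta-\lambda\}$), and this restriction produces the two-term minima.

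You instead avoid factorization for the drift entirely, using the endpoint $\theta=1$ of Lemma~\ref{lem:det_smoothing} pathwise. Since $\theta$ is then frozen, you absorb the singularity by lowering the time-integrability exponent to $q\le p$ and moving the $L^p(\Omega)$ norm inside via Minkowski in $L^{p/q}(\Omega)$. For the diffusion you put the spatial smoothing into $Y_\theta$, estimating it directly in $\dot{\mathbb H}^\eta$ at the cost of $2\theta+(\eta+\beta)^+<1$. You then use Lemma~\ref{lem:det_smoothing} only for time regularity, with $\rho=\eta$.

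Your version buys a more elementary treatment of $\mathcal J_F$, with no factorization identity. It also cleanly separates the moment constraint $\frac1p$ from the initial-singularity constraint $\hat\alpha+\lambda$ (or $\hat\beta+\lambda$) in the $\max\{\cdot,\cdot\}$. You also make explicit the passage from $\mathcal C^{0,\delta'}$ to $\mathring{\mathcal C}^{0,\delta}$ via $\delta<\delta'$, which the paper leaves implicit. The paper's version treats $F$ and $G$ with a single template at $q=p$. In your stochastic global case you could also keep $q=p$ and instead restrict $\theta<\frac1p+\frac12-\frac12(\eta+\beta)^+-\hat\beta-\lambda$, which mirrors the paper's $\theta_G$.

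One wording slip: it is the square root of the $\sigma$-integral, i.e.\ $\|Y_\theta(s)\|_{L^p(\Omega;\dot{\mathbb H}^\eta)}$, that behaves like $s^{\frac12-\theta-\frac12(\eta+\beta)^+-\hat\beta-\lambda}$. This is what you then use correctly.
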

\begin{proof}
An appeal to the (stochastic) Fubini theorem yields the following factorization formulae: for all \(0\leq t_0 \leq t\leq T\),
\begin{align}
    \label{eq:factorization_formula_F}(\mathcal{J}_F^{t_0}X)(t) &= \frac{\sin{(\pi\theta)}}{\pi}(R_\theta F_\theta)(t) \quad\text{ in }\;\dot{\mathbb{H}}^{-\alpha}\quad\mathbb{P}\text{-a.s.}\quad \forall\; 0<\theta<1,\\
    \label{eq:factorization_formula_G}(\mathcal{J}_G^{t_0}X)(t) &= \frac{\sin{(\pi\theta)}}{\pi}(R_\theta G_\theta)(t) \quad\text{ in }\;\dot{\mathbb{H}}^{-\beta}\quad\mathbb{P}\text{-a.s.}\quad\mathbb{P}\text{-a.s.}\quad \forall\;0<\theta<1/2,
\end{align}
where the operator \(R_\theta\) is defined in Lemma~\ref{lem:det_smoothing} and 
\begin{align*}
    F_\theta(t) \coloneqq \int_{t_0}^t(t-\sigma)^{-\theta}S(t-\sigma)F(\sigma,X(\sigma))\,d\sigma, \quad\text{and}\quad     G_\theta(t) \coloneqq \int_{t_0}^t(t-\sigma)^{-\theta}S(t-\sigma)G(\sigma,X(\sigma))\,dW(\sigma).
\end{align*}
For \(p>2\), suppose that \((t_0,\theta)\in[0,T]\times(0,1)\) and \((t_0,\theta)\in[0,T]\times(0,\tfrac12)\) are chosen in such a way that
\[
\int_{t_0}^T
\left(
\int_{t_0}^t
(t-\sigma)^{-\theta}\sigma^{-\hat{\alpha}-\lambda}
\,d\sigma
\right)^p
dt
<\infty\quad\text{ and }\quad\int_{t_0}^T
\left(
\int_{t_0}^t
(t-\sigma)^{-2\theta}\sigma^{-2\hat{\beta}-2\lambda}
\,d\sigma
\right)^{p/2}
dt
<\infty,
\]
respectively.

Now, by applying the Tonelli's theorem and  Minkowski’s integral inequality, we estimate \(F_\theta\):
\begin{align}
\nonumber\|F_\theta\|_{L^p(\Omega;L^p(t_0,T;\dot{\mathbb{H}}^{-\alpha}))}^p &= \int_{t_0}^T\mathbb{E}\left[\|F_\theta(t)\|_{\dot{\mathbb{H}}^{-\alpha}}^p\right]\,dt\\
\label{eq:ftheta_smoothing_1}&\leq  \int_{t_0}^T\left(\int_{t_0}^t(t-\sigma)^{-\theta}\|S(t-\sigma)F(\sigma,X(\sigma))\|_{L^p(\Omega; \dot{\mathbb{H}}^{-\alpha})}\,d\sigma\right)^{p}\,dt.
\end{align}
Similarly, employing Tonelli's theorem, the Burkholder-Davis-Gundy  inequality (Lemma~\ref{lem:BDG}), and  Minkowski’s integral inequality, we bound \(G_\theta\):
\begin{align}
\nonumber\|G_\theta\|_{L^p(\Omega;L^p(t_0,T;\dot{\mathbb{H}}^{-\beta}))}^p &= \int_{t_0}^T\mathbb{E}\left[\|G_\theta(t)\|_{\dot{\mathbb{H}}^{-\beta}}^p\right]\,dt\\
\nonumber&\leq C \int_{t_0}^T\mathbb{E}\left[\left(\int_{t_0}^t(t-\sigma)^{-2\theta}\|S(t-\sigma)G(\sigma,X(\sigma))\|_{HS(U_0;\dot{\mathbb{H}}^{-\beta})}^2\,d\sigma\right)^{p/2}\right]\,dt\\
\label{eq:gtheta_smoothing_1}&\leq C \int_{t_0}^T\left(\int_{t_0}^t(t-\sigma)^{-2\theta}\|S(t-\sigma)G(\sigma,X(\sigma))\|_{L^p(\Omega; HS(U_0;\dot{\mathbb{H}}^{-\beta}))}^2\,d\sigma\right)^{p/2}\,dt.
\end{align}
Invoking the boundedness of the analytic \(\mathcal{C}_0\)-semigroup \(\{S(t)\}_{t\geq 0}\) (Lemma~\ref{lem:semigroup_estimates}~(i)), the nonlinearilty bounds from  Lemma~\ref{lem:Bounds_on_F_and_G}, and the a priori mild solution estimate (\ref{eq:a_priori_estimate}) from Theorem~\ref{thm:main_global_existence}, the inequalities \ref{eq:ftheta_smoothing_1} and \ref{eq:gtheta_smoothing_1} yield:
\begin{align}
\label{eq:ftheta_smoothing_2}\|F_\theta\|_{L^p(\Omega;L^p(t_0,T;\dot{\mathbb{H}}^{-\alpha}))} 
&\leq C (1+\|X_0\|_{L^p(\Omega;\dot{\mathbb{H}}^{\mu})})\left[\int_{t_0}^T\left(\int_{t_0}^t(t-\sigma)^{-\theta}\sigma^{-\hat{\alpha}-\lambda}\,d\sigma\right)^{p}\,dt\right]^{1/p},\quad\text{and}\\
\label{eq:gtheta_smoothing_2}\|G_\theta\|_{L^p(\Omega;L^p(t_0,T;\dot{\mathbb{H}}^{-\beta}))} 
&\leq C (1+\|X_0\|_{L^p(\Omega;\dot{\mathbb{H}}^{\mu})})\left[\int_{t_0}^T\left(\int_{t_0}^t(t-\sigma)^{-2\theta}\sigma^{-2\hat{\beta}-2\lambda}\,d\sigma\right)^{p/2}\,dt\right]^{1/p}.
\end{align}
Notice that the integrals in (\ref{eq:ftheta_smoothing_2}) and (\ref{eq:gtheta_smoothing_2}) are finite for \((t_0,\theta)\in (\{a\}\times(0,1))\cup (\{0\}\times(0,\theta_F))\) and \((t_0,\theta)\in (\{a\}\times(0,1/2))\cup (\{0\}\times(0,\theta_G))\),  respectively, where \(\theta_F \coloneqq \min\{1,1 +1/p-\hat{\alpha} - \lambda\}\) and \(\theta_G \coloneqq \min\{1/2,1/2 +1/p-\hat{\beta} - \lambda\}\). For such parameter pairs \((t_0,\theta)\), applying Lemma~\ref{lem:det_smoothing} to (\ref{eq:ftheta_smoothing_2}) and (\ref{eq:gtheta_smoothing_2}) results in:
\begin{align*}
\|\mathcal{J}_F^{t_0}X\|_{L^p(\Omega;\mathring{\mathcal{C}}^{0, \delta}([t_0,T];\dot{\mathbb{H}}^{\eta}))} &\leq C \|F_\theta\|_{L^p(\Omega;L^p(t_0,T;\dot{\mathbb{H}}^{-\alpha}))}\\
&\leq  C (1+\|X_0\|_{L^p(\Omega;\dot{\mathbb{H}}^{\mu})})\left[\int_{t_0}^T\left(\int_{t_0}^t(t-\sigma)^{-\theta}\sigma^{-2\hat{\alpha}-2\lambda}\,d\sigma\right)^{p}\,dt\right]^{1/p}\quad\forall\,0<\delta<\delta_F,
\end{align*}
and
\begin{align*}
\|\mathcal{J}_G^{t_0}X\|_{L^p(\Omega;\mathring{\mathcal{C}}^{0, \delta}([t_0,T];\dot{\mathbb{H}}^{\eta}))} &\leq C \|G_\theta\|_{L^p(\Omega;L^p(t_0,T;\dot{\mathbb{H}}^{-\beta}))}\\
&\leq  C (1+\|X_0\|_{L^p(\Omega;\dot{\mathbb{H}}^{\mu})})\left[\int_{t_0}^T\left(\int_{t_0}^t(t-\sigma)^{-2\theta}\sigma^{-2\hat{\beta}-2\lambda}\,d\sigma\right)^{p/2}\,dt\right]^{1/p}\quad\forall\,0<\delta<\delta_G,
\end{align*}
respectively, where
\begin{align*}
    \delta_F& \coloneqq \begin{cases}
       \min\{1-1/p - (\eta+\alpha)^+/2,1-\hat{\alpha} - \lambda -(\eta+\alpha)^+/2\} &: (t_0,\theta)\in\{0\}\times (0,\theta_F)\\[4pt]
       1-1/p - (\eta+\alpha)^+/2 &: (t_0,\theta)\in\{a\}\times (0,1),
    \end{cases}
\end{align*}
and
\begin{align*}
    \delta_G& \coloneqq \begin{cases}
       \min\{1/2-1/p - (\eta+\beta)^+/2,1/2-\hat{\beta} - \lambda -(\eta+\beta)^+/2\} &: (t_0,\theta)\in\{0\}\times (0,\theta_G)\\[4pt]
       1/2-1/p - (\eta+\beta)^+/2 &: (t_0,\theta)\in\{a\}\times (0,1/2).
    \end{cases}
\end{align*}
This establishes the required global and local regularity estimates, which concludes the proof.
\end{proof}
\noindent
\begin{proof}[\textbf{Proof of Theorem~\ref{thm:pathwise_regularity}}]
For $X_0 \in L^p(\Omega,\mathcal{F}_0; \dot{\mathbb{H}}^\mu)$ with $p > 2$, we have $X_0 \in \dot{\mathbb{H}}^\mu$ $\mathbb{P}$-a.s. The strong continuity of the analytic $\mathcal{C}_0$-semigroup $\{S(t)\}_{t \geq 0}$ ensures that $S(\cdot)X_0 \in \mathcal{C}([0,T]; \dot{\mathbb{H}}^\mu)$ $\mathbb{P}$-a.s. Applying standard semigroup smoothing properties (cf.~Lemma~\ref{lem:semigroup_estimates}(i)), we obtain the uniform bound $\sup_{t \in [0,T]} \|S(t)X_0\|_{\dot{\mathbb{H}}^\mu} \leq C \|X_0\|_{\dot{\mathbb{H}}^\mu}$ $\mathbb{P}$-a.s., which yields the global continuity estimate:
\begin{equation} \label{eq:proof_initial_global_cont}
    \|S(\cdot)X_0\|_{L^p(\Omega; \mathcal{C}([0,T]; \dot{\mathbb{H}}^\mu))} \leq C \|X_0\|_{L^p(\Omega; \dot{\mathbb{H}}^\mu)}.
\end{equation}

To establish local Hölder regularity, we invoke Theorem~\ref{thm:main_global_existence}, which guarantees $X(a/2) \in \dot{\mathbb{H}}^\mu$ $\mathbb{P}$-a.s.\ and $\|X(a/2)\|_{L^p(\Omega;\dot{\mathbb{H}}^\mu)} \leq C (1+\|X_0\|_{L^p(\Omega;\dot{\mathbb{H}}^\mu)})$ for any $a \in (0, T]$. Lemma~\ref{lem:semigroup_estimates}(iii) implies that for any $\nu \geq \mu$ and $s, t \in [a, T]$, the increment satisfies $\|S(t-a/2)X(a/2) - S(s-a/2)X(a/2)\|_{\dot{\mathbb{H}}^\nu} \leq C |t-s|^\delta (a/2)^{-\frac{1}{2}(2\delta+\nu-\mu)^+}\|X(a/2)\|_{\dot{\mathbb{H}}^\mu}$ $\mathbb{P}$-a.s. For any exponent $0< \delta<\min\big\{1 - \frac{1}{p} - \frac{1}{2}(\nu+\alpha)^+, \frac{1}{2} - \frac{1}{p} - \frac{1}{2}(\nu+\beta)^+\big\}$, this integrates to the bound:
\begin{equation} \label{eq:proof_initial_local_holder}
    \|S(\cdot-a/2)X(a/2)\|_{L^p(\Omega; \mathring{\mathcal{C}}^{0, \delta}([a,T]; \dot{\mathbb{H}}^\nu))} \leq C (a/2)^{-\frac{1}{2}(2\delta+\nu-\mu)^+} \left(1+\|X_0\|_{L^p(\Omega; \dot{\mathbb{H}}^\mu)}\right).
\end{equation}

Under the enhanced spatial regularity assumption $X_0 \in L^p(\Omega; \dot{\mathbb{H}}^{\mu+\epsilon})$ for some $\epsilon \in (0, 1]$, Lemma~\ref{lem:semigroup_estimates}(ii) provides the uniform bound $\|S(t)X_0 - S(s)X_0\|_{\dot{\mathbb{H}}^\mu} \leq C |t-s|^{\epsilon/2} \|X_0\|_{\dot{\mathbb{H}}^{\mu+\epsilon}}$ $\mathbb{P}$-a.s. Taking the $L^p(\Omega)$-norm directly yields global H\"older continuity for any $0< \delta < \epsilon/2$:
\begin{equation} \label{eq:proof_initial_global_holder}
    \|S(\cdot)X_0\|_{L^p(\Omega; \mathring{\mathcal{C}}^{0, \delta}([0,T]; \dot{\mathbb{H}}^\mu))} \leq C \|X_0\|_{L^p(\Omega; \dot{\mathbb{H}}^{\mu+\epsilon})}.
\end{equation}

Finally, the mild solution $X$ satisfies the shifted integral equation, for all \(0\leq t_0<t\leq T\):
\begin{equation}\label{eq:shifted_mild_int_eqn}
    X(t) = S(t-t_0)X(t_0) + (\mathcal{J}_F^{t_0}X)(t) + (\mathcal{J}_G^{t_0}X)(t), \quad \mathbb{P}\text{-a.s.~in }\;\dot{\mathbb{H}}^\mu.
\end{equation}
The assertions of Theorem~\ref{thm:pathwise_regularity} follow by combining \eqref{eq:shifted_mild_int_eqn} and the initial condition estimates \eqref{eq:proof_initial_global_cont}--\eqref{eq:proof_initial_global_holder} with the pathwise convolution bounds of Lemma~\ref{lem:convolutions_pathwise_regularity}. Specifically, we set $\eta=\mu$ and $t_0=0$ to utilize the global estimates \eqref{eq:det_con_global_pathwise} and \eqref{eq:sto_con_global_pathwise}, and $\eta=\nu$ and $t_0=a/2$ for the local estimates \eqref{eq:det_con_local_pathwise} and \eqref{eq:sto_con_local_pathwise}.
\end{proof}

\subsection{Proof of Theorem~\ref{thm:general_initial_value}}
To prove Theorem~\ref{thm:general_initial_value}, we first require several auxiliary results. We begin by stating a lemma that will be used to establish uniqueness throughout this work; its proof relies on a generalized Gr\"onwall inequality (see \cite[Lemma~2.3]{MR1213834}).

\begin{lemma}\label{lem:gtype}
Let $\psi \in L^\infty(0,T)$ satisfy
\begin{equation*}
0 \leq \psi(t) \leq \sum_{j=1}^n C_j t^{\alpha_j} \int_0^t (t-s)^{\beta_j - 1} s^{\gamma_j - 1} \psi(s) \, ds \quad \text{for all } t \in [0,T],
\end{equation*}
where $C_j, \beta_j, \gamma_j \in (0,\infty)$ and $\alpha_j + \beta_j + \gamma_j > 1$. Then $\psi = 0$ a.e. on $[0,T]$.
\end{lemma}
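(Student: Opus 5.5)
The plan is a direct iteration of the integral inequality, not the full generalized Gr\"onwall machinery. I will show that $\psi$ is bounded by $M\,c_n\,t^{n\varepsilon}$ with constants $c_n T^{n\varepsilon}\to 0$. Set
\[
M \coloneqq \|\psi\|_{L^\infty(0,T)}, \qquad \varepsilon \coloneqq \min_{1\le j\le n}\big(\alpha_j+\beta_j+\gamma_j-1\big)>0, \qquad \delta_j \coloneqq \alpha_j+\beta_j+\gamma_j-1-\varepsilon\ge 0.
\]
The signs of the $\alpha_j$ play no role, since only the combined exponent $\alpha_j+\beta_j+\gamma_j-1$ enters.

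\textbf{Key step (one iteration).} Suppose $\psi(s)\le c\,s^{\kappa}$ for a.e.\ $s\in(0,T)$, with some $\kappa\ge 0$ and $c\ge 0$. Insert this into the hypothesis and use the Beta integral
\[
\int_0^t (t-s)^{\beta_j-1}s^{\gamma_j+\kappa-1}\,ds = B(\beta_j,\gamma_j+\kappa)\,t^{\beta_j+\gamma_j+\kappa-1}.
\]
This integral is finite because $\beta_j>0$ and $\gamma_j+\kappa>0$. For $t\in(0,T]$ it gives
\[
\psi(t)\le c\sum_{j=1}^n C_j\,B(\beta_j,\gamma_j+\kappa)\,t^{\kappa+\varepsilon+\delta_j}\le c\,K(\kappa)\,t^{\kappa+\varepsilon},
\qquad K(\kappa)\coloneqq\sum_{j=1}^n C_j\,T^{\delta_j}\,B(\beta_j,\gamma_j+\kappa).
\]
The second inequality uses $\delta_j\ge 0$.

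\textbf{Induction.} Start from $\psi\le M=M s^0$ a.e. Applying the key step $N$ times with $\kappa_k=k\varepsilon$ yields
\[
\psi(t)\le M\prod_{k=0}^{N-1}K(k\varepsilon)\;t^{N\varepsilon}\le M\prod_{k=0}^{N-1}\big(K(k\varepsilon)\,T^{\varepsilon}\big)\quad\text{for a.e. } t\in[0,T].
\]
The bound holds for every $t$ where the hypothesis holds; the integrals themselves only see $\psi$ a.e.

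\textbf{Main point: the product tends to zero.} I need $K(\kappa)\to 0$ as $\kappa\to\infty$. By Stirling's formula, $B(\beta,\gamma+\kappa)=\Gamma(\beta)\Gamma(\gamma+\kappa)/\Gamma(\beta+\gamma+\kappa)\sim\Gamma(\beta)(\gamma+\kappa)^{-\beta}\to 0$, since $\beta>0$. More elementarily, $B(\beta,\gamma+\kappa)=\int_0^1(1-x)^{\beta-1}x^{\gamma+\kappa-1}\,dx\to 0$ by dominated convergence once $\gamma+\kappa\ge 1$. Hence there is $k_0$ with $K(k\varepsilon)T^{\varepsilon}\le \tfrac12$ for all $k\ge k_0$. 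The product is then at most a fixed constant times $2^{-(N-k_0)}$, which tends to $0$ as $N\to\infty$. Therefore $\psi=0$ a.e.\ on $[0,T]$.

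The only delicate point is the decay of $K(\kappa)$; the rest is bookkeeping with Beta integrals.
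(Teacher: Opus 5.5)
Your proof is correct. The paper does not prove this lemma itself; it only invokes Seidler's generalized Gr\"onwall inequality \cite[Lemma~2.3]{MR1213834}. So your proposal is a self-contained alternative rather than a reconstruction of an in-paper argument.

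\textbf{Your route.} You run a Picard-type iteration on power profiles. A bound $\psi(s)\le c\,s^{\kappa}$ is upgraded, via the Beta integral, to $\psi(t)\le c\,K(\kappa)\,t^{\kappa+\varepsilon}$. Everything then rests on the single fact that $B(\beta_j,\gamma_j+\kappa)\to 0$ as $\kappa\to\infty$, which uses only $\beta_j>0$.

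\textbf{What each approach buys.} The citation buys brevity. Your argument instead makes explicit exactly which hypotheses are used:
\begin{itemize}
\item no sign condition on $\alpha_j$ is needed, since only the combined exponent $\alpha_j+\beta_j+\gamma_j-1>0$ enters;
\item no smallness of $T$ is needed, because the decaying Beta factors eventually force every factor $K(k\varepsilon)T^{\varepsilon}\le\tfrac12$;
\item nonnegativity of $\psi$ is used only in the final step, since the kernels are nonnegative;
\item the conclusion holds at every $t\in(0,T]$ where the inequality is assumed, which is slightly stronger than ``a.e.''. The induction also goes through unchanged if the inequality is known only for a.e.\ $t$.
\end{itemize}
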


Next, we establish a local uniqueness result.

\begin{lemma}\label{lem:local_uniqueness_gen_ini_data}
Suppose Assumptions~\ref{ass:measurability_drift_diff} and~\ref{ass:global_lips_nonlin} hold for a set of parameters satisfying Assumption~\ref{ass:parameters}. For $j \in \{1,2\}$, let $X_j$ be a mild solution of \eqref{eq:SEE} with initial value $X_{j,0} \in L^0(\Omega, \mathcal{F}_0; \dot{\mathbb{H}}^\mu)$. Furthermore, suppose $X_j$ satisfies the following pathwise regularity $\mathbb{P}$-almost surely: for any $a \in (0,T)$,
\begin{align}
& X_j \in \mathcal{C}\big([0,T]; \dot{\mathbb{H}}^{\mu}\big) \cap \mathcal{C}\big([a,T]; \dot{\mathbb{H}}^{\nu}\big) \quad \text{with } X_j(0) = X_{j,0}, \label{eq:global_existence_1} \\
&\lim_{t \searrow 0} \|X_j(t)\|_{\lambda,t} = \|X_{j,0}\|_{\dot{\mathbb{H}}^\mu}. \label{eq:global_existence_2}
\end{align}
Then, for all $t \in [0,T]$,
\begin{equation*}
\|X_2(t) - X_1(t)\|_{\lambda,t} = 0 \quad \mathbb{P}\text{-a.s. on } \Gamma,
\end{equation*}
where $\Gamma \coloneqq \{\omega \in \Omega : \|X_{2,0}(\omega) - X_{1,0}(\omega)\|_{\dot{\mathbb{H}}^\mu} = 0\}$ is the event of coincident initial data, and the time-weighted norm $\|\cdot\|_{\lambda,t}$ with $\lambda \coloneqq \frac{\nu-\mu}{2}$ is defined in \eqref{eq:time_weighted_norm}.
\end{lemma}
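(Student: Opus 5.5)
Our plan is a pathwise localization followed by a Gronwall argument based on Lemma~\ref{lem:gtype}. Because $X_{j,0}$ lies only in $L^0$, no moments of $X_j$ are available, so we cut off large values with stopping times and also restrict to the $\mathcal F_0$-measurable event $\Gamma$.

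For $R>0$ we set
\[
\tau_R \coloneqq \inf\{t\in[0,T]: \|X_1(t)\|_{\lambda,t}\vee\|X_2(t)\|_{\lambda,t} > R\}\wedge T
\]
and $\Gamma_R\coloneqq \Gamma\cap\{\|X_{1,0}\|_{\dot{\mathbb H}^\mu}\vee\|X_{2,0}\|_{\dot{\mathbb H}^\mu}<R\}\in\mathcal F_0$. By the pathwise regularity \eqref{eq:global_existence_1}--\eqref{eq:global_existence_2}, the map $t\mapsto\|X_j(t)\|_{\lambda,t}$ is continuous on $(0,T]$ and converges to $\|X_{j,0}\|_{\dot{\mathbb H}^\mu}$ as $t\searrow0$. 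Hence $\tau_R$ is a stopping time, $\tau_R>0$ on $\Gamma_R$, and $\tau_R\uparrow T$ a.s.\ as $R\to\infty$, with $\|X_j(\sigma)\|_{\lambda,\sigma}\le R$ for $\sigma\le\tau_R$.

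We then subtract the two mild equations. On $\Gamma$ the initial terms cancel, so
\begin{align*}
\mathbf 1_{\Gamma_R}\mathbf 1_{\{t\le\tau_R\}}(X_2-X_1)(t)
=\mathbf 1_{\Gamma_R}\mathbf 1_{\{t\le\tau_R\}}\Big(&\int_0^t S(t-\sigma)\mathbf 1_{\Gamma_R}\mathbf 1_{\llbracket0,\tau_R\rrbracket}(\sigma)\,[F(\sigma,X_2)-F(\sigma,X_1)]\,d\sigma\\
&+\int_0^t S(t-\sigma)\mathbf 1_{\Gamma_R}\mathbf 1_{\llbracket0,\tau_R\rrbracket}(\sigma)\,[G(\sigma,X_2)-G(\sigma,X_1)]\,dW(\sigma)\Big).
\end{align*}
The indicators may be moved inside the integrals because $\Gamma_R\in\mathcal F_0$ and $\mathbf 1_{\llbracket0,\tau_R\rrbracket}$ is predictable; inserting the indicator into the stochastic integral is justified by the local property of It\^o integrals. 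Next we define
\[
\psi(t)\coloneqq \mathbb E\big[\mathbf 1_{\Gamma_R}\mathbf 1_{\{t\le\tau_R\}}\|X_2(t)-X_1(t)\|_{\lambda,t}^2\big],
\]
which satisfies $\psi\le 4R^2$, so $\psi\in L^\infty(0,T)$. We apply Assumption~\ref{ass:global_lips_nonlin}, noting that $\|u-v\|_{\dot{\mathbb H}^\nu}\le \sigma^{-\lambda}\|u-v\|_{\lambda,\sigma}$. Together with Lemma~\ref{lem:semigroup_estimates}(i), the Cauchy--Schwarz inequality on the drift, and the It\^o isometry (Lemma~\ref{lem:BDG} with $p=2$) on the noise, this gives, at the levels $\eta=\mu$ and $\eta=\nu$ (the latter weighted by $t^{2\lambda}$),
\[
\psi(t)\le C\sum_{\eta\in\{\mu,\nu\}} t^{2\lambda\mathbf 1_{\eta=\nu}}\Big(\Big(\int_0^t (t-\sigma)^{-\frac12(\eta+\alpha)^+}\sigma^{-\hat\alpha-\lambda}d\sigma\Big)\int_0^t(t-\sigma)^{-\frac12(\eta+\alpha)^+}\sigma^{-\hat\alpha-\lambda}\psi(\sigma)d\sigma+\int_0^t(t-\sigma)^{-(\eta+\beta)^+}\sigma^{-2\hat\beta-2\lambda}\psi(\sigma)d\sigma\Big).
\]
The first factor is bounded by $Ct^{1-\hat\alpha-\lambda-\frac12(\eta+\alpha)^+}$. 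Every term then has the form $t^{a_j}\int_0^t(t-\sigma)^{b_j-1}\sigma^{c_j-1}\psi(\sigma)\,d\sigma$.

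Two things remain to be checked. First, $b_j,c_j>0$: we have $c_j=1-\hat\alpha-\lambda$ or $1-2\hat\beta-2\lambda$, and these are positive by Assumption~\ref{ass:parameters}, since $\hat\alpha+\lambda<1$ and $2\hat\beta+2\lambda<1$ follow from the $\max$ conditions. Also $b_j=1-\frac12(\eta+\alpha)^+$ or $1-(\eta+\beta)^+$, which are positive because $\alpha<2-\nu$ and $\beta<1-\nu$. Second, $a_j+b_j+c_j>1$. For the $G$ term with $\eta=\nu$ this sum is $2\lambda+1-(\nu+\beta)^++1-2\hat\beta-2\lambda$, and it exceeds $1$ exactly because $2\hat\beta<1-(\nu+\beta)^+$. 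The $\mu$ and $F$ terms work out similarly, using $(\mu+\alpha)^+\le(\nu+\alpha)^+$ and the stated bounds on $\hat\alpha,\hat\beta$. This exponent bookkeeping is the main obstacle; the time weight $t^\lambda$ in $\|\cdot\|_{\lambda,t}$ is precisely what balances the singularity $\sigma^{-\lambda}$. For this reason we use the mixed norm throughout rather than separate $\dot{\mathbb H}^\mu$ and $\dot{\mathbb H}^\nu$ estimates.

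Finally, Lemma~\ref{lem:gtype} gives $\psi=0$ a.e. Using continuity in $t$ of $\|X_2(t)-X_1(t)\|_{\lambda,t}$ on $(0,T]$ and Fatou's lemma, we upgrade this to every $t$. Hence $\|X_2(t)-X_1(t)\|_{\lambda,t}=0$ a.s.\ on $\Gamma_R\cap\{t\le\tau_R\}$. Letting $R\to\infty$, we have $\Gamma_R\uparrow\Gamma$ and $\tau_R\uparrow T$ pathwise, which yields the claim on $\Gamma$.
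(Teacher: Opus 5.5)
Your proposal is correct and follows essentially the same route as the paper. Both localize with a stopping time built from $\|X_j(t)\|_{\lambda,t}$ on the coincidence event $\Gamma$, subtract the mild equations, estimate the drift by Cauchy--Schwarz and the noise by the It\^o isometry/BDG, and close with the generalized Gronwall Lemma~\ref{lem:gtype}. The only difference is that the paper tracks just $\mathbb{E}\big[t^{2\lambda}\|X_2(t)-X_1(t)\|_{\dot{\mathbb{H}}^{\nu}}^2\big]$ and recovers the $\dot{\mathbb{H}}^{\mu}$ part via the embedding, whereas you carry the full mixed norm. Your extra $\mu$-level terms do satisfy the exponent condition, but the clean reason is the identity $\lambda+\tfrac12(\mu+\alpha)^+=\tfrac12\max\{\nu-\mu,\nu+\alpha\}$ and its $\beta$-analogue, not the inequality $(\mu+\alpha)^+\le(\nu+\alpha)^+$ that you cite.
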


\begin{proof}
We present the proof for $\mu < \nu$ (i.e., $\lambda \coloneqq (\nu-\mu)/2 > 0$). The case $\mu = \nu$ follows an identical argument by setting $\lambda = 0$ and replacing $\nu$ with $\mu$. 

By definition, $\|X_2(0) - X_1(0)\|_{\lambda,0} = \|X_{2,0} - X_{1,0}\|_{\dot{\mathbb{H}}^\mu} = 0$ $\mathbb{P}$-almost surely on $\Gamma$. Since $\dot{\mathbb{H}}^\nu$ is continuously embedded in $\dot{\mathbb{H}}^\mu$, it suffices to show that for $t \in (0,T]$,
\begin{equation*}
\|X_2(t) - X_1(t)\|_{\dot{\mathbb{H}}^\nu} = 0 \quad \mathbb{P}\text{-a.s. on } \Gamma.
\end{equation*}
The adaptivity of the processes $\{X_j\}_{t \in [0,T]}$ and the regularity properties \eqref{eq:global_existence_1} and \eqref{eq:global_existence_2} imply that $\{\|X_j(t)\|_{\lambda,t}\}_{t \in [0,T]}$ is an adapted and pathwise continuous process. Consequently, for each $R > 0$, 
\begin{equation*}
\tau_{R} \coloneqq T \wedge \inf \Bigl\{ t \in [0,T] : \|X_1(t)\|_{\lambda,t} + \|X_2(t)\|_{\lambda,t} \geq R \Bigr\}\quad\Big(\inf \emptyset = \infty\Big)
\end{equation*}
is a well-defined stopping time. Defining $\xi_R(t) \coloneqq \mathbf{1}_{\llbracket 0, \tau_{R} \rrbracket}(t) \mathbf{1}_{\Gamma}$ and $X_{j,R} \coloneqq \xi_R X_j$ for $j \in \{1,2\}$, we deduce from \eqref{eq:mild_solution_formula} that for all $t \in [0,T]$,
\begin{align*}
X_{2,R}(t) - X_{1,R}(t) &= \xi_R(t) \int_0^t \xi_R(s) S(t-s) \big(F(s, X_{2,R}(s)) - F(s, X_{1,R}(s))\big) \, ds \\
&\quad + \xi_R(t) \int_0^t \xi_R(s) S(t-s) \big(G(s, X_{2,R}(s)) - G(s, X_{1,R}(s))\big) \, dW(s) \quad \mathbb{P}\text{-a.s.}
\end{align*}
Applying Lemma~\ref{lem:BDG} yields the following estimate for $t \in [0,T]$:
\begin{align*}
\mathbb{E}\left[t^{2\lambda} \|X_{2,R}(t) - X_{1,R}(t)\|_{\dot{\mathbb{H}}^\nu}^2\right] &\leq C t^{2\lambda} \mathbb{E}\left[\left(\int_0^t \xi_R(\sigma) \|S(t-\sigma) \big(F(\sigma, X_{2,R}(\sigma)) - F(\sigma, X_{1,R}(\sigma))\big)\|_{\dot{\mathbb{H}}^\nu} \, d\sigma \right)^2\right] \\
&\quad + C t^{2\lambda} \mathbb{E}\left[\int_0^t \xi_R(\sigma) \|S(t-\sigma) \big(G(\sigma, X_{2,R}(\sigma)) - G(\sigma, X_{1,R}(\sigma))\big)\|_{\mathrm{HS}(U_0,\dot{\mathbb{H}}^{\nu})}^2 \, d\sigma\right].
\end{align*}
Under Assumption~\ref{ass:global_lips_nonlin}, invoking Lemma~\ref{lem:semigroup_estimates} provides
\begin{align*}
\mathbb{E}\left[t^{2\lambda} \|X_{2,R}(t) - X_{1,R}(t)\|_{\dot{\mathbb{H}}^\nu}^2\right] &\leq C t^{2\lambda} \mathbb{E}\left[\left(\int_0^t (t-s)^{-\frac{1}{2}(\alpha+\nu)^+} s^{-\hat{\alpha}} \|X_{2,R}(s) - X_{1,R}(s)\|_{\dot{\mathbb{H}}^\nu} \, ds \right)^2\right] \\
&\quad + C t^{2\lambda} \int_0^t (t-s)^{(\beta+\nu)^+} s^{-2\hat{\beta}} \mathbb{E}\left[\|X_{2,R}(s) - X_{1,R}(s)\|_{\dot{\mathbb{H}}^\nu}^2\right] \, ds.
\end{align*}
Imposing the parameter constraints \eqref{eq:parameter_constraints}, we apply H\"older's inequality and Fubini's theorem to obtain
\begin{equation}\label{eq:R_local_uniqueness_2}
\begin{aligned}
\varphi_R(t) &\leq C t^{\lambda+1-\hat{\alpha}-\frac{1}{2}(\nu+\alpha)^+} \int_0^t (t-s)^{-\frac{1}{2}(\alpha+\nu)^+} s^{-\hat{\alpha}-\lambda} \varphi_R(s) \, ds \\
&\quad + C t^{2\lambda} \int_0^t (t-s)^{(\beta+\nu)^+} s^{-2\hat{\beta}-2\lambda} \varphi_R(s) \, ds,
\end{aligned}
\end{equation}
for all $t \in [0,T]$, where $\varphi_R(t) \coloneqq \mathbb{E}\big[t^{2\lambda} \|X_{2,R}(t) - X_{1,R}(t)\|_{\dot{\mathbb{H}}^\nu}^2\big]$. Invoking Lemma~\ref{lem:gtype} in conjunction with the pathwise regularity conditions \eqref{eq:global_existence_1} and \eqref{eq:global_existence_2} ensures that $\varphi_R(t) = 0$ for all $t \in [0,T]$. Consequently, 
\begin{equation*}
t^{\lambda} \|X_{2,R}(t) - X_{1,R}(t)\|_{\dot{\mathbb{H}}^\nu} = 0 \quad \mathbb{P}\text{-a.s.}
\end{equation*}
Letting $R \to \infty$ concludes the proof.
\end{proof}

\medskip

Our next result concerns the modified problem obtained by localizing the initial data.

\begin{lemma}\label{lem:localized_prob_with_localized_initial_data}
For each $N \in \mathbb{N}$ and $p \geq 2$, the localized stochastic initial value problem
\begin{equation*}
\begin{cases}
dX^N(t)+ AX^N(t)\, dt = F(t, X^N(t))  \, dt + G(t, X^N(t)) \, dW(t), & t \in (0,T], \\
X^N(0) = X_0^N \coloneqq \mathbf{1}_{\Omega_N} X_0, \quad \Omega_N \coloneqq \{\omega \in \Omega : \|X_0(\omega)\|_{\dot{\mathbb{H}}^\mu} \leq N\},
\end{cases}
\end{equation*}
admits a unique mild solution $X^N \in \mathbb{V}_p$ satisfying the following regularity properties $\mathbb{P}$-almost surely:
\begin{enumerate}[label={\upshape(\roman*)}]
    \item For any $a > 0$ and $0 < \delta < \min\big\{1 - \frac{1}{2}(\nu+\alpha)^+, \frac{1}{2} - \frac{1}{2}(\nu+\beta)^+\big\}$,
    \begin{equation*}
    X^N \in \mathcal{C}([0,T]; \dot{\mathbb{H}}^\mu) \cap \mathring{\mathcal{C}}^{0, \delta}([a,T]; \dot{\mathbb{H}}^\nu).
    \end{equation*}
    \item For $\lambda \coloneqq (\nu-\mu)/2 > 0$,
    \begin{equation*}
    \lim_{t \searrow 0} t^\lambda \|X^N(t)\|_{\dot{\mathbb{H}}^\nu} = 0.
    \end{equation*}
\end{enumerate}
\end{lemma}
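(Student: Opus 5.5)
The plan is to reduce everything to the global $L^p$ theory already proved. Since $\|X_0^N\|_{\dot{\mathbb{H}}^\mu}\le N$ pointwise and $\Omega_N\in\mathcal{F}_0$ (because $X_0$ is $\mathcal{F}_0$-measurable), we have $X_0^N\in L^p(\Omega,\mathcal{F}_0;\dot{\mathbb{H}}^\mu)$ for every $p\ge 2$. Theorem~\ref{thm:main_global_existence}(i) then gives a unique mild solution $X^N\in\mathbb{V}_p$. Uniqueness in $\mathbb{V}_p$ for each $p$, together with $\mathbb{V}_{p'}\subset\mathbb{V}_p$ for $p'\ge p$, shows that these solutions coincide as versions. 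We may therefore work with a single $X^N$ lying in $\mathbb{V}_p$ for all $p$.

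For (i), fix $a>0$ and $\delta$ in the stated range. Since the admissible range in Theorem~\ref{thm:pathwise_regularity}(ii) increases to the stated one as $p\to\infty$, we choose $p$ large enough that
\[
\delta<\min\Big\{1-\tfrac1p-\tfrac12(\nu+\alpha)^+,\ \tfrac12-\tfrac1p-\tfrac12(\nu+\beta)^+\Big\},\qquad (\nu+\alpha)^+<2-\tfrac2p,\qquad (\nu+\beta)^+<1-\tfrac2p .
\]
Assumption~\ref{ass:parameters} gives $\alpha+\nu<2$ and $\beta+\nu<1$, and $\mu\le\nu$, so the same choice of large $p$ also verifies the hypotheses of Theorem~\ref{thm:pathwise_regularity}(i). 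That theorem then yields a version of $X^N$ in $\mathcal{C}([0,T];\dot{\mathbb{H}}^\mu)\cap\mathring{\mathcal{C}}^{0,\delta}([a,T];\dot{\mathbb{H}}^\nu)$ almost surely. To obtain one version valid for all $a$ and $\delta$ at once, we take countable sequences $a_k\downarrow0$ and $\delta_k$ increasing to the supremum. We then use that two continuous versions are indistinguishable, and that $\mathring{\mathcal{C}}^{0,\delta_k}$ embeds into $\mathring{\mathcal{C}}^{0,\delta}$ for $\delta<\delta_k$.

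For (ii), we split using the mild formula, $X^N(t)=S(t)X_0^N+\mathcal{J}_FX^N(t)+\mathcal{J}_GX^N(t)$. We treat the two kinds of term separately.

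\emph{The initial term.} Write $t^\lambda\|S(t)X_0^N\|_{\dot{\mathbb{H}}^\nu}=\|t^\lambda A^\lambda S(t)X_0^N\|_{\dot{\mathbb{H}}^\mu}$. The operators $t^\lambda A^\lambda S(t)$ are uniformly bounded by Lemma~\ref{lem:semigroup_estimates}(i), and they tend to $0$ strongly as $t\searrow0$. This follows by checking on eigenvectors, where $t^\lambda\lambda_k^\lambda e^{-t\lambda_k}\to0$, and then using density. Hence this term tends to $0$ pathwise.

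\emph{The convolution terms.} We apply Theorem~\ref{thm:pathwise_regularity}(iii) in spirit: Lemma~\ref{lem:convolutions_pathwise_regularity} with $\eta=\nu$ and $t_0=0$ would give continuity at $0$ in $\dot{\mathbb{H}}^\nu$, but its hypotheses may fail because of the $\lambda$ and $\hat\alpha$ terms. So instead we aim for a pathwise bound $\sup_{t\le T}t^{\lambda-\kappa}\|\mathcal{J}_FX^N(t)\|_{\dot{\mathbb{H}}^\nu}<\infty$ almost surely, for some $\kappa>0$. We get it through the factorization $\mathcal{J}_F=\tfrac{\sin\pi\theta}{\pi}R_\theta F_\theta$, with a weighted Hölder estimate. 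Concretely, $F_\theta\in L^p(0,T;\dot{\mathbb{H}}^{-\alpha})$ comes with the time weight $\sigma^{\hat\alpha+\lambda}$ absorbed. Applying Hölder's inequality in $\int_0^t(t-\sigma)^{\theta-1-\frac12(\nu+\alpha)^+}\|F_\theta(\sigma)\|\,d\sigma$ then gives the factor $t^{\theta-\frac1p-\frac12(\nu+\alpha)^+}$, and a positive power remains after multiplying by $t^\lambda$. This positivity is exactly what $\hat\alpha<1-\frac12(\alpha+\nu)$ and $p$ large guarantee; the stochastic term is handled the same way, using $\hat\beta<\frac12-\frac12(\beta+\nu)$.

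\textbf{Main obstacle.} The hardest step is this weighted pathwise estimate near $t=0$. I would first try the factorization with the weighted function $\sigma^{\hat\alpha+\lambda-\epsilon}F_\theta(\sigma)$. If that proves awkward, the fallback is a Kolmogorov/Borel–Cantelli argument on dyadic intervals $[2^{-k-1},2^{-k}]$. Using the $L^p$ decay rate \eqref{eq:det_w_norm_bound} with $\eta=\nu$, together with local Hölder bounds whose constants scale like a power of $2^{k}$, one sums over $k$ for large $p$ to get $\sup_{t\in[2^{-k-1},2^{-k}]}t^\lambda\|\mathcal{J}X^N(t)\|_{\dot{\mathbb{H}}^\nu}\to0$ almost surely.
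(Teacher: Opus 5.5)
Your proposal is correct and follows the paper's proof. Theorems~\ref{thm:main_global_existence} and~\ref{thm:pathwise_regularity} (with $p$ large) give existence and (i), density kills $t^\lambda S(t)X_0^N$, and the factorization with a time-weighted $F_\theta$ plus H\"older's inequality handles the convolutions; the paper absorbs only $t^\lambda$ via $\widetilde{F}_\theta = t^\lambda F_\theta$, but your weight $\sigma^{\hat\alpha+\lambda}$ works equally well, so the Borel--Cantelli fallback is not needed.

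One bookkeeping slip needs fixing. With your weight, H\"older gives the factor $t^{\theta-\frac1p-\frac12(\nu+\alpha)^+-\hat\alpha-\lambda}$, not $t^{\theta-\frac1p-\frac12(\nu+\alpha)^+}$, and it requires $(\hat\alpha+\lambda)\frac{p}{p-1}<1$. That holds for large $p$, since $\hat\alpha+\lambda<1$ by Assumption~\ref{ass:parameters}. After multiplying by $t^\lambda$ the exponent is $\theta-\frac1p-\frac12(\nu+\alpha)^+-\hat\alpha$, which is positive for $\theta$ near $1$ exactly because $\hat\alpha<1-\frac12(\nu+\alpha)^+$. The stochastic term works the same way, with $\theta$ near $\frac12$ and $\hat\beta$ in place of $\hat\alpha$.
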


\begin{proof}
Since $\mathbf{1}_{\Omega_N} X_0 \in L^{\infty}(\Omega; \dot{\mathbb{H}}^\mu)$ for each $p \geq 2$, Theorem~\ref{thm:main_global_existence} guarantees the existence of a unique mild solution $X^N \in \mathbb{V}_p$ satisfying, for all $t \in [0,T]$,
\begin{equation}\label{eq:mild_solution_truncated_gen_ini_value}
X^N(t) = S(t)X_0^N + \int_0^t S(t-\sigma) F(\sigma, X^N(\sigma)) \, d\sigma + \int_0^t S(t-\sigma) G(\sigma, X^N(\sigma)) \, dW(\sigma) \quad \text{in } \dot{\mathbb{H}}^\mu \text{ } \mathbb{P}\text{-a.s.}
\end{equation}
Furthermore, since $X^N \in \mathbb{V}_p$ for all $p \geq 2$, Theorem~\ref{thm:pathwise_regularity} ensures that $X^N$ exhibits the stated pathwise regularity in (i).

To establish (ii), we multiply both sides of \eqref{eq:mild_solution_truncated_gen_ini_value} by $t^\lambda$ and demonstrate that all three resulting terms on the right-hand side vanish as $t \searrow 0$ $\mathbb{P}$-almost surely. Because $X_0^N(\omega) \in \dot{\mathbb{H}}^\mu$ and $\dot{\mathbb{H}}^\mu$ is dense in $\dot{\mathbb{H}}^\nu$, for any $\epsilon > 0$, there exists $Y \in \dot{\mathbb{H}}^\nu$ such that $\|X_0^N(\omega) - Y\|_{\dot{\mathbb{H}}^\mu} < \epsilon$. By Lemma~\ref{lem:semigroup_estimates}(i),
\begin{equation*}
t^\lambda \|S(t)X_0^N(\omega)\|_{\dot{\mathbb{H}}^\nu} \leq t^\lambda \|S(t)(X_0^N(\omega) - Y)\|_{\dot{\mathbb{H}}^\nu} + t^\lambda \|S(t)Y\|_{\dot{\mathbb{H}}^\nu} \leq C\|X_0^N(\omega) - Y\|_{\dot{\mathbb{H}}^\mu} + t^\lambda \|Y\|_{\dot{\mathbb{H}}^\nu}.
\end{equation*}
Consequently, $\limsup_{t \searrow 0} t^\lambda \|S(t)X_0^N(\omega)\|_{\dot{\mathbb{H}}^\nu} \leq C\epsilon$. Since $\epsilon > 0$ is arbitrary, we obtain
\begin{equation}\label{eq:first_term_limit_X_0^N}
\lim_{t \searrow 0} t^\lambda \|S(t)X_0^N(\omega)\|_{\dot{\mathbb{H}}^\nu} = 0.
\end{equation}
Next, utilizing the factorization formulae \eqref{eq:factorization_formula_F}-\eqref{eq:factorization_formula_G}, we write $\mathbb{P}$-almost surely:
\begin{align}
(\mathcal{J}_F X^N)(t) &= \frac{\sin{\pi\theta}}{\pi} \int_0^t (t-\sigma)^{\theta-1} \sigma^{-\lambda} S(t-\sigma) \widetilde{F}_\theta(\sigma) \, d\sigma \quad\text{in }\;\dot{\mathbb{H}}^{-\alpha}\; \text{ for } 0 < \theta < 1, \label{eq:factorization_formula_F_X^N} \\
(\mathcal{J}_G X^N)(t) &= \frac{\sin{\pi\theta}}{\pi} \int_0^t (t-\sigma)^{\theta-1} \sigma^{-\lambda} S(t-\sigma) \widetilde{G}_\theta(\sigma) \, d\sigma \quad \text{in }\;\dot{\mathbb{H}}^{-\beta}\; \text{ for } 0 < \theta < 1/2, \label{eq:factorization_formula_G_X^N}
\end{align}
where 
\begin{equation*}
\widetilde{F}_\theta(t) \coloneqq t^\lambda \int_{0}^t (t-\sigma)^{-\theta} S(t-\sigma) F(\sigma, X^N(\sigma)) \, d\sigma, \quad \text{and} \quad \widetilde{G}_\theta(t) \coloneqq t^\lambda \int_{0}^t (t-\sigma)^{-\theta} S(t-\sigma) G(\sigma, X^N(\sigma)) \, dW(\sigma).
\end{equation*}
Following the same derivation as in \eqref{eq:ftheta_smoothing_2}-\eqref{eq:gtheta_smoothing_2}, we deduce
\begin{align*}
\|\widetilde{F}_\theta\|_{L^p(\Omega; L^p(0,T; \dot{\mathbb{H}}^{-\alpha}))} &\leq C (1 + \|X_0^N\|_{L^p(\Omega; \dot{\mathbb{H}}^{\mu})}) \left[\int_{0}^T \left(t^\lambda \int_{0}^t (t-\sigma)^{-\theta} \sigma^{-\hat{\alpha}-\lambda} \, d\sigma\right)^{p} \, dt\right]^{1/p} \\
&\leq C (1 + \|X_0^N\|_{L^p(\Omega; \dot{\mathbb{H}}^{\mu})}) B(1-\theta, 1-\hat{\alpha}-\lambda) T^{1-\theta-\hat{\alpha}+\frac{1}{p}} < \infty,
\end{align*}
for $0 < \theta < \min\{1, 1-\hat{\alpha}+1/p\}$, and similarly,
\begin{align*}
\|\widetilde{G}_\theta\|_{L^p(\Omega; L^p(0,T; \dot{\mathbb{H}}^{-\beta}))} &\leq C (1 + \|X_0^N\|_{L^p(\Omega; \dot{\mathbb{H}}^{\mu})}) \left[\int_{0}^T \left(t^{2\lambda} \int_{0}^t (t-\sigma)^{-2\theta} \sigma^{-2\hat{\beta}-2\lambda} \, d\sigma\right)^{p/2} \, dt\right]^{1/p} \\
&\leq C (1 + \|X_0^N\|_{L^p(\Omega; \dot{\mathbb{H}}^{\mu})}) \big(B(1-2\theta, 1-2\hat{\beta}-2\lambda)\big)^{1/2} T^{1/2-\theta-\hat{\beta}+\frac{1}{p}} < \infty,
\end{align*}
for $0 < \theta < \min\{1/2, 1/2-\hat{\beta}+1/p\}$. These estimates imply that for almost all $\omega \in \Omega$, 
\begin{align}
\|\widetilde{F}_\theta(\cdot, \omega)\|_{L^p(0,T; \dot{\mathbb{H}}^{-\alpha})} &< \infty \quad \text{for } 0 < \theta < \min\{1, 1-\hat{\alpha}+1/p\}, \label{eq:ftheta_smoothing_2_X_0^N_w} \\
\|\widetilde{G}_\theta(\cdot, \omega)\|_{L^p(0,T; \dot{\mathbb{H}}^{-\beta})} &< \infty \quad \text{for } 0 < \theta < \min\{1/2, 1/2-\hat{\beta}+1/p\}. \label{eq:gtheta_smoothing_2_X_0^N_w}
\end{align}
Given $0 < \lambda < 1$, $(\nu+\alpha)^+ < 2$, $(\nu+\beta)^+ < 1$, $0 \leq \hat{\alpha} < 1 - \frac{1}{2}(\nu+\alpha)^+$, and $0 \leq 2\hat{\beta} < 1 - (\nu+\hat{\beta})^{+}$, there exist pairs $(p_F, \theta_F) \in [2, \infty) \times (0, 1)$ and $(p_G, \theta_G) \in [2, \infty) \times (0, 1/2)$ such that
\begin{align*}
1-\lambda-\frac{1}{p_F} &> 0 \quad \text{and} \quad \frac{1}{p_F} + \frac{1}{2}(\nu+\alpha)^+ < \theta_F < \min\left\{1, 1-\hat{\alpha}+\frac{1}{p_F}\right\}, \\
1-\lambda-\frac{1}{p_G} &> 0 \quad \text{and} \quad \frac{1}{p_G} + \frac{1}{2}(\nu+\beta)^+ < \theta_G < \min\left\{\frac{1}{2}, \frac{1}{2}+\frac{1}{p_G}-\hat{\beta}\right\}.
\end{align*}
Applying Lemma~\ref{lem:semigroup_estimates}(i) and H\"older's inequality, together with estimates \eqref{eq:ftheta_smoothing_2_X_0^N_w}-\eqref{eq:gtheta_smoothing_2_X_0^N_w} in \eqref{eq:factorization_formula_F_X^N}-\eqref{eq:factorization_formula_G_X^N}, we deduce that for almost all $\omega \in \Omega$,
\begin{align*}
t^\lambda \|(\mathcal{J}_F X^N)(t,\omega)\|_{\dot{\mathbb{H}}^\nu} &\leq C t^{\theta_F-\frac{1}{p_F}-\frac{1}{2}(\nu+\alpha)^+} \|\widetilde{F}_\theta(\cdot,\omega)\|_{L^{p_F}(0,T; \dot{\mathbb{H}}^{-\alpha})} \left(B\big(1+q_F(\theta_F - 1 - \tfrac{1}{2}(\nu+\alpha)^+), 1-\lambda q_F\big)\right)^{1/q_F}, \\
t^\lambda \|(\mathcal{J}_G X^N)(t,\omega)\|_{\dot{\mathbb{H}}^\nu} &\leq C t^{\theta_G-\frac{1}{p_G}-\frac{1}{2}(\nu+\beta)^+} \|\widetilde{G}_\theta(\cdot,\omega)\|_{L^{p_G}(0,T; \dot{\mathbb{H}}^{-\beta})} \left(B\big(1+q_G(\theta_G - 1 - \tfrac{1}{2}(\nu+\beta)^+), 1-\lambda q_G\big)\right)^{1/q_G},
\end{align*}
where $1/p_F + 1/q_F = 1$ and $1/p_G + 1/q_G = 1$. These bounds, combined with \eqref{eq:first_term_limit_X_0^N}, show that the terms vanish as $t \searrow 0$, completing the proof of (ii).
\end{proof}

\medskip

\begin{proof}[\textbf{Proof of Theorem~\ref{thm:general_initial_value}}]
Since $\Omega_N \subseteq \Omega_{N+1}$, invoking Lemmas~\ref{lem:local_uniqueness_gen_ini_data} and~\ref{lem:localized_prob_with_localized_initial_data} reveals that for almost all $\omega \in \cup_{N \in \mathbb{N}} \Omega_N$, there exists an index $N_\omega \in \mathbb{N}$ such that
\begin{equation*}
\|X^M(\cdot, \omega) - X^N(\cdot, \omega)\|_{\mathcal{C}([0,T]; \dot{\mathbb{H}}^\mu)} = 0 \quad \text{for all } M, N \geq N_\omega.
\end{equation*}
Thus, $\big(X^N(\cdot, \omega)\big)_{N \in \mathbb{N}}$ is a Cauchy sequence in $\mathcal{C}([0,T]; \dot{\mathbb{H}}^\mu)$, and its limit $X^{N_\omega}(\cdot, \omega) = \lim_{N \to \infty} X^N(\cdot, \omega)$ exists. Moreover, since $\cup_{N \in \mathbb{N}} \Omega_N = \{\omega \in \Omega : \|X_0(\omega)\|_{\dot{\mathbb{H}}^\mu} < \infty\}$ and $\mathbb{P}(\cup_{N \in \mathbb{N}} \Omega_N) = 1$, the process
\begin{equation*}
X \coloneqq \lim_{N \to \infty} X^N \quad \text{in } \mathcal{C}([0,T]; \dot{\mathbb{H}}^\mu) \quad \mathbb{P}\text{-a.s.}
\end{equation*}
is well-defined. Because each $X^N$ is predictable, $X$ admits a predictable version. Noting that $\mathbb{P}(\cup_{N \in \mathbb{N}} \Omega_N) = 1$, $\|X - X^N\|_{\mathcal{C}([0,T]; \dot{\mathbb{H}}^\mu)} = 0$ $\mathbb{P}$-a.s. on $\Omega_N$, and each $X^N$ satisfies both \eqref{eq:mild_solution_truncated_gen_ini_value} and the pathwise regularities of Lemma~\ref{lem:localized_prob_with_localized_initial_data}, it follows that $X$ is a mild solution of \eqref{eq:SEE} fulfilling regularity conditions \eqref{eq:general_initial_value_1} and \eqref{eq:general_initial_value_2}. The uniqueness of the mild solution $X$ is finally guaranteed by Lemma~\ref{lem:local_uniqueness_gen_ini_data}.
\end{proof}

\subsection{Proof of Theorem~\ref{thm:local_existence}}
To establish Theorem~\ref{thm:local_existence}, we first prove an auxiliary result that facilitates the construction of a local mild solution to \eqref{eq:SEE}. 

\begin{lemma}\label{lem:local_mild_solution_existence}
Suppose the parameter constraints \eqref{eq:parameter_constraints} hold. For $j \in \{1,2\}$, consider the stochastic evolution equations
\begin{equation}\label{eq:SEE_j}
\begin{aligned}
    dX_j(t) + A X_j(t) \, dt &= F_j(t, X_j(t)) \, dt + G_j(t, X_j(t)) \, dW(t), \quad t \in (0, T], \\
    X_j(0) &= X_0 \in L^0(\Omega, \mathcal{F}_0; \dot{\mathbb{H}}^\mu),
\end{aligned}
\end{equation}
where $F_j$ and $G_j$ satisfy Assumptions~\ref{ass:measurability_drift_diff} and~\ref{ass:global_lips_nonlin}, respectively. Let $R > 0$ be such that, for all $t \in [0,T]$, the following conditions hold $\mathbb{P}$-almost surely,
\begin{equation}\label{eq:F1=F2_and_G1=G2}
F_1(t,u) = F_2(t,u) \quad \text{and} \quad G_1(t,u) = G_2(t,u),
\end{equation}
for all $u \in \dot{\mathbb{H}}^\nu$ satisfying $\|u\|_{\lambda,t} \leq R$. Then, the random times $\tau_j : \Omega \to [0,T]$ defined by
\begin{equation*}
\tau_j \coloneqq T \wedge \inf \{ t \in [0,T] : \|X_j(t)\|_{\lambda,t} > R \}\quad\Big(\inf \emptyset = \infty\Big)
\end{equation*}
are stopping times such that, for $t \in [0,T]$,
\begin{equation*}
\|X_2(t) - X_1(t)\|_{\lambda,t} = 0 \quad \mathbb{P}\text{-a.s. on } \{t \leq \tau_1\},
\end{equation*}
and $\tau_1 = \tau_2$ $\mathbb{P}$-almost surely.
\end{lemma}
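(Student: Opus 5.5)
First I will check that $\tau_1,\tau_2$ are stopping times. By Theorem~\ref{thm:general_initial_value} applied to each pair $(F_j,G_j)$, the solution $X_j$ is predictable, and its paths satisfy \eqref{eq:general_initial_value_1}--\eqref{eq:general_initial_value_2}. Hence $t\mapsto \|X_j(t)\|_{\lambda,t}$ is a continuous adapted real process on $[0,T]$. At $t=0$ it equals $\|X_0\|_{\dot{\mathbb H}^\mu}$, because $t^\lambda\|X_j(t)\|_{\dot{\mathbb H}^\nu}\to0$. The hitting time of the open set $(R,\infty)$ by a continuous adapted process is an optional time, and since the filtration is normal (right-continuous) it is a stopping time.

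For the coincidence on $\{t\le\tau_1\}$, I will localize in the same way as in the proof of Lemma~\ref{lem:local_uniqueness_gen_ini_data}. Set $\sigma\coloneqq\tau_1\wedge\tau_2$ and $\xi(t)\coloneqq\mathbf 1_{\llbracket0,\sigma\rrbracket}(t)$. On $\llbracket 0,\sigma\rrbracket$ both $\|X_1(s)\|_{\lambda,s}\le R$ and $\|X_2(s)\|_{\lambda,s}\le R$ hold, where at the hitting time itself we use path continuity. By \eqref{eq:F1=F2_and_G1=G2} we get $F_1(s,X_1(s))=F_2(s,X_1(s))$ and likewise for $G$, so we may write, $\mathbb P$-a.s.,
\[
\xi(t)\big(X_2(t)-X_1(t)\big)=\xi(t)\Big(\int_0^t\xi(s)S(t-s)\big[F_2(s,X_2(s))-F_2(s,X_1(s))\big]ds+\int_0^t\xi(s)S(t-s)\big[G_2(s,X_2(s))-G_2(s,X_1(s))\big]dW(s)\Big).
\]
The $S(t)X_0$ terms cancel because the initial data agree. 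Only the global Lipschitz bounds of $F_2,G_2$ are needed from here on. Let $\varphi(t)\coloneqq\mathbb E[\xi(t)t^{2\lambda}\|X_2(t)-X_1(t)\|^2_{\dot{\mathbb H}^\nu}]$, together with the analogous $\dot{\mathbb H}^\mu$ quantity. These are bounded by $4R^2$, so they lie in $L^\infty(0,T)$. Repeating the estimate \eqref{eq:R_local_uniqueness_2} with Lemma~\ref{lem:BDG}, Lemma~\ref{lem:semigroup_estimates} and Assumption~\ref{ass:global_lips_nonlin} yields a Volterra-type inequality of the form in Lemma~\ref{lem:gtype}. That lemma then gives $\varphi\equiv0$, hence $\|X_2(t)-X_1(t)\|_{\lambda,t}=0$ a.s. on $\{t\le\sigma\}$.

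\textbf{Main obstacle.} We must show $\tau_1=\tau_2$, which upgrades the conclusion from $\{t\le\sigma\}$ to $\{t\le\tau_1\}$. The stopping-time property gives indistinguishability only after taking a countable dense set of $t$ and using continuity of paths on $(0,T]$ in $\dot{\mathbb H}^\nu$ and at $0$ via \eqref{eq:general_initial_value_2}. With that, $X_1=X_2$ as paths of $\|\cdot\|_{\lambda,\cdot}$ on $[0,\sigma]$ a.s. Suppose $\tau_1<\tau_2$ on a set of positive probability. By continuity, every neighbourhood to the right of $\tau_1$ contains points where $\|X_1\|_{\lambda,t}>R$, unless $\tau_1=T$. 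So I will extend the coincidence slightly past $\sigma$. To do this, restart the argument with $\xi$ replaced by $\mathbf 1_{\llbracket0,\tau_2\rrbracket}$ on the event $\{\tau_1<\tau_2\}$, and run it on a small interval $[\tau_1,\tau_1+h]$, using that $F_1=F_2$ only along $X_2$, which stays in the $R$-ball there. Since $X_1$ and $X_2$ solve the same equation driven by $F_2,G_2$ only as long as $X_1$ itself stays in the ball, I expect the cleaner route is to compare $X_1$ and $X_2$ on $\llbracket0,\tau_2\rrbracket$ directly. On that interval $X_2$ solves the $(F_1,G_1)$ equation, since $F_2(s,X_2)=F_1(s,X_2)$ there. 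Both are then solutions of the globally Lipschitz equation with $(F_1,G_1)$, and Gronwall with the globally Lipschitz pair $(F_1,G_1)$ gives $X_1=X_2$ on $[0,\tau_2]$ without needing $X_1$ to be in the ball. Therefore $\tau_1\ge\tau_2$, and by symmetry $\tau_1=\tau_2$. This also directly gives the claim on $\{t\le\tau_1\}$.
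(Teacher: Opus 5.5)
Your final route (localize at $\tau_2$ alone, observe that $X_2$ then solves the $(F_1,G_1)$ equation on $\llbracket 0,\tau_2\rrbracket$, compare with $X_1$ via the global Lipschitz bounds of $(F_1,G_1)$ to get $\tau_1\ge\tau_2$, and conclude by symmetry) is the paper's argument with the indices interchanged, so you can drop the preliminary step with $\tau_1\wedge\tau_2$ and the restart idea. One detail needs adding: once the localization no longer keeps $X_1$ in the $R$-ball, and since $X_0\in L^0$ gives no moments, you need an auxiliary sequence as in the paper, e.g. $\eta_n = T\wedge\inf\{t:\|X_1(t)\|_{\lambda,t}+\|X_2(t)\|_{\lambda,t}>n\}$, so that $\varphi$ is bounded before you invoke Lemma~\ref{lem:gtype}, and then let $n\to\infty$.
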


\begin{proof}
By Theorem~\ref{thm:general_initial_value}, for each $j \in \{1,2\}$, problem \eqref{eq:SEE_j} admits a unique mild solution $X_j$ satisfying \eqref{eq:mild_solution_formula} and the following pathwise regularity $\mathbb{P}$-almost surely: for any $a \in (0,T)$,
\begin{equation*}
X_j \in \mathcal{C}([0,T]; \dot{\mathbb{H}}^\mu) \cap \mathcal{C}([a,T]; \dot{\mathbb{H}}^\nu) \quad \text{and} \quad \|X_j(\cdot)\|_{\lambda,\cdot} \in \mathcal{C}([0,T]) \quad \text{with} \quad \|X_j(0)\|_{\lambda,0} = \|X_0\|_{\dot{\mathbb{H}}^\mu}.
\end{equation*}
Consequently, the processes $\big(\|X_j(t)\|_{\lambda,t}\big)_{t \in [0,T]}$ are adapted and pathwise continuous. Thus, 
\begin{equation*}
\eta_n \coloneqq T \wedge \inf \{ t \in [0,T] : \|X_1(t)\|_{\lambda,t} + \|X_2(t)\|_{\lambda,t} > n \}\quad\Big(\inf \emptyset = \infty\Big) 
\end{equation*}
defines a sequence of stopping times $(\eta_n)_{n \in \mathbb{N}}$. Let $\xi_n(t) \coloneqq \mathbf{1}_{\llbracket 0, \eta_n \wedge \tau_1 \rrbracket}(t)$ and set $X_j^n(t) \coloneqq \xi_n(t) X_j(t)$ for $j \in \{1,2\}$. Because the processes $X_j$ satisfy \eqref{eq:mild_solution_formula}, we have $\mathbb{P}$-almost surely:
\begin{equation}\label{eq:difference_Fj}
\begin{aligned}
    X_2^n(t) - X_1^n(t) &= \xi_n(t) \int_0^t \xi_n(\sigma) S(t-\sigma) \big(F_2(\sigma, X_2^n(\sigma)) - F_2(\sigma, X_1^n(\sigma))\big) \, d\sigma \\
    &\quad + \xi_n(t) \int_0^t \xi_n(\sigma) S(t-\sigma) \big(G_2(\sigma, X_2^n(\sigma)) - G_2(\sigma, X_1^n(\sigma))\big) \, dW(\sigma) \\
    &\quad + \xi_n(t) \int_0^t \xi_n(\sigma) S(t-\sigma) \big(F_2(\sigma, X_1^n(\sigma)) - F_1(\sigma, X_1^n(\sigma))\big) \, d\sigma \\
    &\quad + \xi_n(t) \int_0^t \xi_n(\sigma) S(t-\sigma) \big(G_2(\sigma, X_1^n(\sigma)) - G_1(\sigma, X_1^n(\sigma))\big) \, dW(\sigma).
\end{aligned}
\end{equation}
By the definition of $\tau_1$ and the pathwise continuity of $\|X_j(t)\|_{\lambda,t}$, it follows that $\|X_j(t)\|_{\lambda,t} \leq R$ on $\llbracket 0, \eta_n \wedge \tau_1 \rrbracket$. Consequently, hypothesis \eqref{eq:F1=F2_and_G1=G2} implies that the last two integrals in the equation above vanish. Taking the norm in $\dot{\mathbb{H}}^\nu$ and applying the Burkholder-Davis-Gundy (BDG) inequality (Lemma~\ref{lem:BDG}), we obtain for $t \in [0,T]$:
\begin{align*}
    \mathbb{E}\left[t^{2\lambda} \|X_2^n(t) - X_1^n(t)\|_{\dot{\mathbb{H}}^\nu}^2\right] &\leq C t^{2\lambda} \mathbb{E}\left[\left(\int_0^t \xi_n(\sigma) \|S(t-\sigma) \big(F_2(\sigma, X_2^n(\sigma)) - F_2(\sigma, X_1^n(\sigma))\big)\|_{\dot{\mathbb{H}}^\nu} \, d\sigma \right)^2\right] \\
    &\quad + C t^{2\lambda} \mathbb{E}\left[\int_0^t \xi_n(\sigma) \|S(t-\sigma) \big(G_2(\sigma, X_2^n(\sigma)) - G_2(\sigma, X_1^n(\sigma))\big)\|_{\mathrm{HS}(U_0,\dot{\mathbb{H}}^\nu)}^2 \, d\sigma\right].
\end{align*}
Invoking the Assumption~\ref{ass:global_lips_nonlin} alongside Lemma~\ref{lem:semigroup_estimates} yields
\begin{align*}
    \mathbb{E}\left[t^{2\lambda} \|X_2^n(t) - X_1^n(t)\|_{\dot{\mathbb{H}}^\nu}^2\right] &\leq C t^{2\lambda} \mathbb{E}\left[\left(\int_0^t (t-s)^{-\frac{1}{2}(\alpha+\nu)^+} s^{-\hat{\alpha}} \|X_2^n(s) - X_1^n(s)\|_{\dot{\mathbb{H}}^\nu} \, ds\right)^2\right] \\
    &\quad + C t^{2\lambda} \int_0^t (t-s)^{(\beta+\nu)^+} s^{-2\hat{\beta}} \mathbb{E}\left[\|X_2^n(s) - X_1^n(s)\|_{\dot{\mathbb{H}}^\nu}^2\right] \, ds.
\end{align*}
Applying H\"older's inequality followed by Fubini's theorem under the parameter constraints \eqref{eq:parameter_constraints}, we deduce for all $t \in [0,T]$:
\begin{align*}
    \mathbb{E}\left[t^{2\lambda} \|X_2^n(t) - X_1^n(t)\|_{\dot{\mathbb{H}}^\nu}^2\right] &\leq C t^{\lambda+1-\hat{\alpha}-\frac{1}{2}(\nu+\alpha)^+} \int_0^t (t-s)^{-\frac{1}{2}(\alpha+\nu)^+} s^{-\hat{\alpha}-\lambda} \mathbb{E}\left[s^{2\lambda} \|X_2^n(s) - X_1^n(s)\|_{\dot{\mathbb{H}}^\nu}^2\right] \, ds \\
    &\quad + C t^{2\lambda} \int_0^t (t-s)^{(\beta+\nu)^+} s^{-2\hat{\beta}-2\lambda} \mathbb{E}\left[s^{2\lambda} \|X_2^n(s) - X_1^n(s)\|_{\dot{\mathbb{H}}^\nu}^2\right] \, ds.
\end{align*}
An application of Lemma~\ref{lem:gtype}, in conjunction with the assumed pathwise regularity of $X_j$, ensures $\mathbb{E}\big[t^{2\lambda} \|X_2^n(t) - X_1^n(t)\|_{\dot{\mathbb{H}}^\nu}^2\big] = 0$. Consequently, $t^\lambda \|X_2^n(t) - X_1^n(t)\|_{\dot{\mathbb{H}}^\nu} = 0$ $\mathbb{P}$-almost surely. Letting $n \to \infty$ gives
\begin{equation*}
t^\lambda \mathbf{1}_{\llbracket 0, \tau_1 \rrbracket}(t) \|X_2(t) - X_1(t)\|_{\dot{\mathbb{H}}^\nu} = 0 \quad \mathbb{P}\text{-a.s.}
\end{equation*}
Since $\|X_2(0) - X_1(0)\|_{\dot{\mathbb{H}}^\mu} = 0$ $\mathbb{P}$-almost surely, we conclude that
\begin{equation*}
\mathbf{1}_{\llbracket 0, \tau_1 \rrbracket}(t) \|X_2(t) - X_1(t)\|_{\lambda,t} = 0 \quad \mathbb{P}\text{-a.s.}
\end{equation*}
The pathwise continuity of the processes $\big(\|X_j(t)\|_{\lambda,t}\big)_{t \in [0,T]}$ then implies $\tau_1 \leq \tau_2$ $\mathbb{P}$-almost surely. Interchanging the roles of $\tau_1$ and $\tau_2$ yields $\tau_2 \leq \tau_1$ $\mathbb{P}$-almost surely, establishing $\tau_1 = \tau_2$.
\end{proof}

Our next result establishes the uniqueness of local mild solutions.

\begin{lemma}\label{lem:local_mild_solution_uniqueness}
Suppose that Assumption~\ref{ass:local_lip_stoch} and the parameter constraints \eqref{eq:parameter_constraints} are satisfied. Let $X_0 \in L^0(\Omega, \mathcal{F}_0; \dot{\mathbb{H}}^\mu)$. For $j \in \{1,2\}$, let $(X_j, \tau_j)$ be a local mild solution of \eqref{eq:SEE} possessing a localizing sequence of stopping times $\tau_j^N \uparrow \tau_j$ $\mathbb{P}$-almost surely such that, for each $N \in \mathbb{N}$, the stopped process $\{X_j(t \wedge \tau_j^N)\}_{t \in [0,T]}$ is an $\dot{\mathbb{H}}^\mu$-valued predictable stochastic process satisfying the following pathwise regularity $\mathbb{P}$-almost surely: for any $a \in (0,T)$,
\begin{align}
X_j(\cdot \wedge \tau_j^N) &\in \mathcal{C}([0,T]; \dot{\mathbb{H}}^\mu) \cap \mathcal{C}([a,T]; \dot{\mathbb{H}}^\nu) \quad \text{with } X_j(0) = X_0, \label{eq:local_existence_1N} \\
\lim_{t \searrow 0} \|X_j(t)\|_{\lambda,t} &= \|X_0\|_{\dot{\mathbb{H}}^\mu} \quad \text{on } \{\tau_j^N > 0\}. \label{eq:local_existence_2N}
\end{align}
Then, for any $t \in [0,T]$,
\begin{equation*}
\|X_2(t) - X_1(t)\|_{\lambda,t} = 0 \quad \mathbb{P}\text{-a.s. on } \{t \leq \tau_1^N \wedge \tau_2^N\} \quad \text{for all } N \in \mathbb{N},
\end{equation*}
and consequently,
\begin{equation*}
\|X_2(t) - X_1(t)\|_{\lambda,t} = 0 \quad \mathbb{P}\text{-a.s. on } \{t < \tau_1 \wedge \tau_2\}.
\end{equation*}
\end{lemma}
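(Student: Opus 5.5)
The plan is to mimic the proofs of Lemma~\ref{lem:local_uniqueness_gen_ini_data} and Lemma~\ref{lem:local_mild_solution_existence}: localize with stopping times so that the locally Lipschitz coefficients behave globally Lipschitz, then close with the Gr\"onwall-type Lemma~\ref{lem:gtype}. Fix $N$ and treat $\lambda>0$; the case $\lambda=0$ is the same with $\nu$ replaced by $\mu$. By \eqref{eq:local_existence_1N}--\eqref{eq:local_existence_2N}, the processes $t\mapsto \|X_j(t\wedge\tau_j^N)\|_{\lambda,t}$ are adapted and have continuous paths on $[0,T]$, with value $\|X_0\|_{\dot{\mathbb{H}}^\mu}$ at $t=0$. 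Hence, for $R>0$,
\[
\eta_R\coloneqq \tau_1^N\wedge\tau_2^N\wedge \inf\bigl\{t\in[0,T]:\|X_1(t)\|_{\lambda,t}+\|X_2(t)\|_{\lambda,t}>R\bigr\}\wedge T
\]
is a stopping time. To get a uniformly bounded start, I also multiply by $\mathbf{1}_{\{\|X_0\|_{\dot{\mathbb{H}}^\mu}\le R\}}$, which is $\mathcal{F}_0$-measurable. This gives $\eta_R\uparrow\tau_1^N\wedge\tau_2^N$ a.s.\ as $R\to\infty$.

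Set $\xi_R(t)\coloneqq\mathbf{1}_{\llbracket0,\eta_R\rrbracket}(t)\mathbf{1}_{\{\|X_0\|\le R\}}$ and $X_j^R\coloneqq\xi_RX_j$. Using the indicator formulation of Definition~\ref{def:local_mild_solution}(iii), subtracting the two mild equations cancels $S(t)X_0$ and gives
\[
X_2^R(t)-X_1^R(t)=\xi_R(t)\Bigl(\int_0^t\xi_R(\sigma)S(t-\sigma)\bigl[F(\sigma,X_2)-F(\sigma,X_1)\bigr]d\sigma+\int_0^t\xi_R(\sigma)S(t-\sigma)\bigl[G(\sigma,X_2)-G(\sigma,X_1)\bigr]dW(\sigma)\Bigr).
\]
On $\llbracket0,\eta_R\rrbracket$ we have $\|X_j(\sigma)\|_{\lambda,\sigma}\le R$. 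Assumption~\ref{ass:local_lip_stoch} therefore applies with constants $L_1(R)$ and $L_3(R)$ and bounds the integrands by $\sigma^{-\hat\alpha}\|X_2^R-X_1^R\|_{\lambda,\sigma}$ and $\sigma^{-\hat\beta}\|X_2^R-X_1^R\|_{\lambda,\sigma}$.

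Next, take $\dot{\mathbb{H}}^\mu$- and $t^\lambda\dot{\mathbb{H}}^\nu$-norms. I use Lemma~\ref{lem:semigroup_estimates}(i) for the semigroup, BDG (Lemma~\ref{lem:BDG}) with $p=2$ for the stochastic integral, and Cauchy--Schwarz and Fubini for the drift, exactly as in the proof of Lemma~\ref{lem:local_mild_solution_existence}. Define
\[
\psi_R(t)\coloneqq\mathbb{E}\bigl[\|X_2^R(t)-X_1^R(t)\|_{\lambda,t}^2\bigr].
\]
I aim for an inequality of the form in Lemma~\ref{lem:gtype}, with kernels $t^{a}(t-s)^{-\frac12(\eta+\alpha)^+}s^{-\hat\alpha-\lambda}$ and $t^{b}(t-s)^{-(\eta+\beta)^+}s^{-2\hat\beta-2\lambda}$ for $\eta\in\{\mu,\nu\}$. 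Writing $\|u\|_{\dot{\mathbb{H}}^\nu}\le s^{-\lambda}\|u\|_{\lambda,s}$ produces the $s^{-\lambda}$ factors. The exponent conditions $\alpha_j+\beta_j+\gamma_j>1$ follow from Assumption~\ref{ass:parameters}, just as in \eqref{eq:R_local_uniqueness_2}. Lemma~\ref{lem:gtype} also needs $\psi_R\in L^\infty(0,T)$. This holds because $\|X_j^R(t)\|_{\lambda,t}\le R$, so $\psi_R\le 4R^2$; this is exactly why the cutoff in both the path norm and $X_0$ is needed. Lemma~\ref{lem:gtype} then gives $\psi_R=0$ a.e., and since $\psi_R$ is bounded by the right-hand side, $\psi_R(t)=0$ for every $t$.

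The main obstacle is the bookkeeping in this Gr\"onwall step, specifically justifying the Cauchy--Schwarz split of the drift term. The plan is to split $(t-s)^{-\frac12(\nu+\alpha)^+}s^{-\hat\alpha}$ as in \eqref{eq:R_local_uniqueness_2}, so that one factor integrates to $t^{1-\hat\alpha-\frac12(\nu+\alpha)^+}$ times a power compatible with $\lambda$. A second, more formal point is that predictability of $\xi_R$ makes the stochastic integral well defined. Once $\psi_R=0$, we get $\|X_2(t)-X_1(t)\|_{\lambda,t}=0$ a.s.\ on $\{t\le\eta_R\}\cap\{\|X_0\|\le R\}$. Letting $R\to\infty$ yields the claim on $\{t\le\tau_1^N\wedge\tau_2^N\}$. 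Finally, $\{t<\tau_1\wedge\tau_2\}\subseteq\bigcup_N\{t\le\tau_1^N\wedge\tau_2^N\}$ because $\tau_j^N\uparrow\tau_j$, and a countable union of null sets gives the second assertion.
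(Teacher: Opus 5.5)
Your proposal is correct and follows essentially the paper's own proof: localize below $\tau_1^N\wedge\tau_2^N$ by a hitting time of $\|X_1(t)\|_{\lambda,t}+\|X_2(t)\|_{\lambda,t}$, subtract the two mild equations, apply Assumption~\ref{ass:local_lip_stoch} together with BDG and Lemma~\ref{lem:semigroup_estimates}, close with Lemma~\ref{lem:gtype}, and then let the cutoff and $N$ tend to infinity. The differences are cosmetic. The paper runs the Gr\"onwall argument on $\varphi_n(t)=\mathbb{E}\bigl[t^{2\lambda}\|X_2^n(t)-X_1^n(t)\|_{\dot{\mathbb{H}}^\nu}^2\bigr]$, which is where its $s^{-\lambda}$ factors come from; with your $\psi_R$ the kernels $s^{-\hat\alpha}$ and $s^{-2\hat\beta}$ already suffice. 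Your extra cutoff on $\|X_0\|_{\dot{\mathbb{H}}^\mu}$ is harmless but unnecessary: on $\{2\|X_0\|_{\dot{\mathbb{H}}^\mu}>R\}$ one has $\eta_R=0$, so the localized difference vanishes identically and $\psi_R$ stays bounded anyway.
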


\begin{proof}
For a fixed $N \in \mathbb{N}$ and $j \in \{1,2\}$, the adaptivity and pathwise regularity \eqref{eq:local_existence_1N}--\eqref{eq:local_existence_2N} of $\{X_j(t \wedge \tau_j^N)\}_{t \in [0,T]}$ imply that the process $Y^N(t) \coloneqq \|X_1(t \wedge \tau_1^N)\|_{\lambda, t \wedge \tau_1^N} + \|X_2(t \wedge \tau_2^N)\|_{\lambda, t \wedge \tau_2^N}$ is adapted and pathwise continuous $\mathbb{P}$-almost surely. Thus, 
\begin{equation}\label{eq:R_localized_stopping_time}
\theta_n \coloneqq \tau_1^N \wedge \tau_2^N \wedge \inf \{ t \in [0,T] : Y^N(t) > n \} \quad\Big(\inf \emptyset = \infty\Big)
\end{equation}
defines a sequence of stopping times. Let $\zeta_n(t) \coloneqq \mathbf{1}_{\llbracket 0, \theta_n \rrbracket}(t)$ and $X_j^n(t) \coloneqq \zeta_n(t) X_j(t)$. Because $X_j(t \wedge \tau_j^N)$ satisfies \eqref{eq:local_mild_solution_formula}, we have $\mathbb{P}$-almost surely for $t \in [0,T]$:
\begin{align*}
    X_2^n(t) - X_1^n(t) &= \zeta_n(t) \int_0^t \zeta_n(s) S(t-s) \big(F(s, X_2^n(s)) - F(s, X_1^n(s))\big) \, ds \\
    &\quad + \zeta_n(t) \int_0^t \zeta_n(s) S(t-s) \big(G(s, X_2^n(s)) - G(s, X_1^n(s))\big) \, dW(s).
\end{align*}
Invoking the Assumption~\ref{ass:local_lip_stoch} and applying Lemmas~\ref{lem:BDG} and~\ref{lem:semigroup_estimates}, we deduce for $t \in [0,T]$:
\begin{align*}
    \mathbb{E}\left[t^{2\lambda} \|X_2^n(t) - X_1^n(t)\|_{\dot{\mathbb{H}}^\nu}^2\right] &\leq C(n) t^{2\lambda} \mathbb{E}\left[\left(\int_0^t (t-s)^{-\frac{1}{2}(\alpha+\nu)^+} s^{-\hat{\alpha}} \|X_2^n(s) - X_1^n(s)\|_{\lambda,s} \, ds\right)^2\right] \\
    &\quad + C(n) t^{2\lambda} \int_0^t (t-s)^{(\beta+\nu)^+} s^{-2\hat{\beta}} \mathbb{E}\left[\|X_2^n(s) - X_1^n(s)\|_{\lambda,s}^2\right] \, ds,
\end{align*}
where, the positive constants \(C(n)\) may depend on \(n\). Employing H\"older's inequality, Fubini's theorem, and the bound $\|X_2^n(s) - X_1^n(s)\|_{\lambda,s} \leq (1+T^\lambda) \|X_2^n(s) - X_1^n(s)\|_{\dot{\mathbb{H}}^\nu}$, the parameter constraints \eqref{eq:parameter_constraints} yield
\begin{equation*}
\varphi_n(t) \leq \widetilde{C}(n) t^{\lambda+1-\hat{\alpha}-\frac{1}{2}(\nu+\alpha)^+} \int_0^t (t-s)^{-\frac{1}{2}(\alpha+\nu)^+} s^{-\hat{\alpha}-\lambda} \varphi_n(s) \, ds + \widetilde{C}(n) t^{2\lambda} \int_0^t (t-s)^{(\beta+\nu)^+} s^{-2\hat{\beta}-2\lambda} \varphi_n(s) \, ds,
\end{equation*}
where $\varphi_n(t) \coloneqq \mathbb{E}\big[t^{2\lambda} \|X_2^n(t) - X_1^n(t)\|_{\dot{\mathbb{H}}^\nu}^2\big]$, and the positive \(\widetilde{C}(n)\) may depend on \(n\). Lemma~\ref{lem:gtype} then guarantees $\varphi_n(t) = 0$ for all $t \in [0,T]$, which immediately implies $t^\lambda \|X_2^n(t) - X_1^n(t)\|_{\dot{\mathbb{H}}^\nu} = 0$ $\mathbb{P}$-almost surely. Letting $n \to \infty$, we obtain
\begin{equation*}
t^\lambda \mathbf{1}_{\llbracket 0, \tau_1^N \wedge \tau_2^N \rrbracket}(t) \|X_2(t) - X_1(t)\|_{\dot{\mathbb{H}}^\nu} = 0 \quad \mathbb{P}\text{-a.s.}
\end{equation*}
Since $N \in \mathbb{N}$ is arbitrary and $\|X_2(0) - X_1(0)\|_{\dot{\mathbb{H}}^\mu} = 0$, we conclude
\begin{equation*}
\mathbf{1}_{\llbracket 0, \tau_1^N \wedge \tau_2^N \rrbracket}(t) \|X_2(t) - X_1(t)\|_{\lambda,t} = 0 \quad \mathbb{P}\text{-a.s. for all } N \in \mathbb{N}.
\end{equation*}
Taking the limit as $N \to \infty$ establishes the final claim.
\end{proof}

\medskip

\begin{proof}[\textbf{Proof of Theorem~\ref{thm:local_existence}}]
We proceed via a truncation argument. For each $N \in \mathbb{N}$, we define the truncated drift $F_N : (0,T] \times \Omega \times \dot{\mathbb{H}}^\mu \to \dot{\mathbb{H}}^{-\alpha}$ and diffusion $G_N : (0,T] \times \Omega \times \dot{\mathbb{H}}^\mu \to \mathrm{HS}(U_0, \dot{\mathbb{H}}^{-\beta})$ via the time-weighted norm $\|\cdot\|_{\lambda,t}$ given in \eqref{eq:time_weighted_norm}:
\begin{equation*}
F_N(t, u) \coloneqq 
\begin{cases} 
    F(t, u) & \text{if } \|u\|_{\lambda,t} \leq N, \\
    F\left(t, \frac{N u}{\|u\|_{\lambda,t}}\right) & \text{if } \|u\|_{\lambda,t} > N,
\end{cases} \quad \mathbb{P}\text{-a.s.}
\end{equation*}
and
\begin{equation*}
G_N(t, u) \coloneqq 
\begin{cases} 
    G(t, u) & \text{if } \|u\|_{\lambda,t} \leq N, \\
    G\left(t, \frac{N u}{\|u\|_{\lambda,t}}\right) & \text{if } \|u\|_{\lambda,t} > N,
\end{cases} \quad \mathbb{P}\text{-a.s.}
\end{equation*}
Under Assumption~\ref{ass:local_lip_stoch}, the truncated coefficients $F_N$ and $G_N$ satisfy the Assumption~\ref{ass:global_lips_nonlin} for each fixed $N$. Thus, by Theorem~\ref{thm:general_initial_value}, the truncated stochastic evolution equation 
\begin{equation}\label{eq:truncated_sivp}
\begin{cases}
    dY_N(t) + A Y_N(t) \, dt = F_N(t, Y_N(t)) \, dt + G_N(t, Y_N(t)) \, dW(t), & t \in (0, T], \\
    Y_N(0) = X_0,
\end{cases}
\end{equation}
admits a unique global mild solution $Y_N$. That is, for all \(t \in [0,T]\), \(Y_N\) satisfies the following equation $\mathbb{P}$-almost surely in $\dot{\mathbb{H}}^\mu$,
\begin{equation*}
Y_N(t) = S(t)X_0 + (\mathcal{J}_{F_N}Y_N)(t) + (\mathcal{J}_{G_N}Y_N)(t),
\end{equation*}
and exhibits the following pathwise regularity $\mathbb{P}$-almost surely: for any $a \in (0,T)$ and $0 < \delta < \min\big\{1 - \frac{1}{2}(\nu+\alpha)^+, \frac{1}{2} - \frac{1}{2}(\nu+\beta)^+\big\}$,
\begin{align}
&Y_N \in \mathcal{C}([0,T]; \dot{\mathbb{H}}^\mu) \cap \mathring{\mathcal{C}}^{0,\delta}([a,T]; \dot{\mathbb{H}}^\nu) \quad \text{with } Y_N(0) = X_0 \text{ in } \dot{\mathbb{H}}^\mu \text{ }, \label{eq:truncated_general_initial_value_1} \\
&\lim_{t \searrow 0} \|Y_N(t)\|_{\lambda,t} = \|X_0\|_{\dot{\mathbb{H}}^\mu} \quad\text{ for } \lambda \coloneqq (\nu-\mu)/2 > 0. \label{eq:truncated_general_initial_value_2}
\end{align}
Define a sequence of random times $(\tau_N)_{N \in \mathbb{N}}$ by
\begin{equation}\label{eq:stopping_time_def}
\tau_N \coloneqq T \wedge \inf \{ t \in [0,T] : \|Y_N(t)\|_{\lambda,t} > N \},
\end{equation}
where $\inf \emptyset = \infty$. Because $\|Y_N(t)\|_{\lambda,t}$ is adapted and pathwise continuous, $\tau_N$ is a well-defined stopping time. If we define the intermediate stopping time 
\begin{equation*}
\widetilde{\tau}_N \coloneqq T \wedge \inf \{ t \in [0,T] : \|Y_{N+1}(t)\|_{\lambda,t} > N \},
\end{equation*}
it follows immediately that $\widetilde{\tau}_N \leq \tau_{N+1}$ $\mathbb{P}$-almost surely. Lemma~\ref{lem:local_mild_solution_existence} further ensures $\widetilde{\tau}_N = \tau_N$ $\mathbb{P}$-almost surely, yielding the monotonicity property:
\begin{equation}\label{eq:stoping_time_monotonicity}
\tau_N \leq \tau_{N+1} \quad \mathbb{P}\text{-a.s.}
\end{equation}
This monotonicity, combined with Lemma~\ref{lem:local_mild_solution_uniqueness}, establishes the local uniqueness statement:
\begin{equation}\label{eq:local_uniqueness_result_N}
\|Y_{N+1}(t) - Y_N(t)\|_{\lambda,t} = 0 \quad \mathbb{P}\text{-a.s. on } \{t \leq \tau_N\}.
\end{equation}
The pathwise continuity of the process \(\{\|Y_N(t)\|_{\lambda,t}\}_{t\in[0,T]}\) along with \eqref{eq:stoping_time_monotonicity} and \eqref{eq:local_uniqueness_result_N} yields:
\begin{align}
    \label{eq:stoping_time_strict_monotonicity}& \tau_N\,<\,\tau_{N+1}\;\mathbb{P}\text{-almost surely.}
\end{align}
Define the stopping time \(\tau_{\max}\coloneqq \lim\limits_{N\to\infty}\,\tau_N\). By applying \eqref{eq:stoping_time_strict_monotonicity}, it follows that \(\tau_{\max}\,>\,0\;\mathbb{P}\text{-almost surely}\). Since $\|Y_N(0) - X_0\|_{\dot{\mathbb{H}}^\mu} = 0$ $\mathbb{P}$-almost surely, Lemma~\ref{lem:local_mild_solution_uniqueness} dictates that for each fixed \(t\in(0,T]\) and almost every \(\omega\in\{0<t<\tau_{\max}\}\)
\begin{align*}
    &\|Y_M(t,\omega)-Y_N(t,\omega)\|_{\lambda,t} \to 0\;\text{ as }\;M>N\to\infty,
\end{align*}
that is, \((Y_N(t,\omega))_{N\in\mathbb{N}}\) is a Cauchy sequence in the Banach space \(\big(\dot{\mathbb{H}}^\nu, \|\cdot\|_{\lambda,t}\big)\) and hence convergent. Therefore, for all \(t\in[0,T]\), we may define
\begin{equation}\label{eq:maximal_local_solution}
X(t) \coloneqq \begin{cases}
    \lim_{N \to \infty} Y_N(t) \quad &:\;\mathbb{P}\text{-a.s. on the event } \{t < \tau_{\max}\},\\
    0\quad &:\;\text{otherwise,}
\end{cases} 
\end{equation}
 where convergence occurs in the time-weighted norm \eqref{eq:time_weighted_norm}. Hence, \(\{X(t)\}_{t\in[0,T]}\) is a well-defined process. By construction, for each $n \in \mathbb{N}$,
\begin{equation}\label{eq:maximal_local_solution_X}
X(t) = Y_n(t) \quad \mathbb{P}\text{-a.s. on } \{t \leq \tau_n\},
\end{equation}
\begin{equation}\label{eq:maximal_local_solution_FG}
F_n(t, Y_n(t)) = F(t, X(t)), \quad G_n(t, Y_n(t)) = G(t, X(t)) \quad \mathbb{P}\text{-a.s. on } \{t \leq \tau_n\}.
\end{equation}
Because each $Y_n$ is a mild solution of \eqref{eq:truncated_sivp} satisfying \eqref{eq:truncated_general_initial_value_1}--\eqref{eq:truncated_general_initial_value_2}, relations \eqref{eq:maximal_local_solution_X}--\eqref{eq:maximal_local_solution_FG} confirm that $(X, \tau_{\max})$ is a local mild solution to \eqref{eq:SEE} fulfilling the necessary pathwise regularity conditions \eqref{eq:local_existence_1}--\eqref{eq:local_existence_2}.

To establish the blow-up alternative, observe that for almost every $\omega \in \{\tau_{\max} < T\}$, $\tau_N(\omega) \uparrow \tau_{\max}(\omega)$. Thus, the pathwise continuity of the truncated solutions requires $\|X(\tau_N(\omega), \omega)\|_{\lambda, \tau_N(\omega)} = \|Y_N(\tau_N(\omega), \omega)\|_{\lambda, \tau_N(\omega)} = N$. Consequently,
\begin{equation}\label{eq:local_mild_sol_blow-up}
\limsup_{t \uparrow \tau_{\max}} \|X(t)\|_{\lambda,t} = +\infty \quad \mathbb{P}\text{-a.s. on } \{ \tau_{\max} < T \}.
\end{equation}

Finally, we prove the maximality and uniqueness of $(X, \tau_{\max})$. Suppose $(Y, \tau)$ is another local mild solution satisfying the pathwise regularity results \eqref{eq:local_existence_1}--\eqref{eq:local_existence_2}, with $\tau > \tau_{\max}$ $\mathbb{P}$-almost surely. Then, for almost every $\omega$, there exists a constant $C_\omega < \infty$ such that
\begin{equation}\label{eq:maximality_bddness}
\sup_{t \in [0, \tau_{\max}(\omega)]} \|Y(t, \omega)\|_{\lambda,t} \leq C_\omega.
\end{equation}
By Lemma~\ref{lem:local_mild_solution_uniqueness}, we must have
\begin{equation*}
\|X(t) - Y(t)\|_{\lambda,t} = 0 \quad \mathbb{P}\text{-a.s. on } \{t < \tau_{\max} \wedge \tau\},
\end{equation*}
meaning $\|X(t)\|_{\lambda,t} = \|Y(t)\|_{\lambda,t}$ on $\{t < \tau_{\max}\}$. Combining this with \eqref{eq:local_mild_sol_blow-up} and \eqref{eq:maximality_bddness} reveals that, for almost every $\omega \in \{\tau_{\max} < T\}$,
\begin{equation*}
+\infty = \limsup_{t \uparrow \tau_{\max}(\omega)} \|X(t, \omega)\|_{\lambda,t} \leq \sup_{t \in [0, \tau_{\max}(\omega)]} \|Y(t, \omega)\|_{\lambda,t} \leq C_\omega < \infty,
\end{equation*}
which is a contradiction. Therefore, we must have $\tau \leq \tau_{\max}$ $\mathbb{P}$-almost surely. Uniqueness up to $\tau \wedge \tau_{\max}$ is directly guaranteed by Lemma~\ref{lem:local_mild_solution_uniqueness}.
\end{proof}

\section{Applications to SPDEs}\label{sec:examples}
In this section, we verify that the abstract assumptions introduced above, in particular the local Lipschitz conditions in Assumption~\ref{ass:local_lip_stoch}, are satisfied by several classical nonlinear SPDEs. Now, we recall the following standard embedding results, which will be used frequently throughout this section.
\begin{lemma}[Fractional Sobolev and duality embeddings]
\label{lem:sobolev_embedding}
Let $\mathcal{O} \subseteq \mathbb{R}^d$ be either the torus $\mathbb{T}^d$ or a bounded, open, connected domain possessing the $C^2$-extension property. Let the operator $A \colon \mathcal{D}(A) \subset \mathbb{H} \to \mathbb{H}$ (where $\mathbb{H} = L^2(\mathcal{O}; \mathbb{R}^N)$ with $N \in \mathbb{N}$) satisfy Assumption~\ref{ass:operator_A}, and assume that $\mathcal{D}(A) \subset H^2(\mathcal{O}; \mathbb{R}^N)$. Then, for any $s \in [0, 2]$, the following continuous embeddings hold:
\begin{enumerate}[label={\upshape(\roman*)}]
\item $\dot{\mathbb{H}}^{s} \hookrightarrow L^{p}(\mathcal{O}; \mathbb{R}^N)$ densely, for any $p \in [2, \infty)$ satisfying $\frac{1}{p} \geq \frac{1}{2} - \frac{s}{d}$.
\item $L^{q}(\mathcal{O}; \mathbb{R}^N) \hookrightarrow \dot{\mathbb{H}}^{-s}$, for any $q \in (1, 2]$ satisfying $\frac{1}{q} \leq \frac{1}{2} + \frac{s}{d}$.
\end{enumerate}
\end{lemma}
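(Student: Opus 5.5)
The plan is to reduce both embeddings to the classical fractional Sobolev embedding via an identification of $\dot{\mathbb{H}}^s$ with a closed subspace of the Bessel potential space $H^s(\mathcal{O};\mathbb{R}^N)$, and then obtain (ii) from (i) by duality.

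\emph{Step 1 (identification of $\dot{\mathbb{H}}^s$, $s\in[0,2]$).} For $s=0$ we have $\dot{\mathbb{H}}^0=\mathbb{H}=L^2(\mathcal{O};\mathbb{R}^N)$ isometrically. For $s=2$, $\dot{\mathbb{H}}^2=\mathcal{D}(A)$ with norm $\|Au\|_{\mathbb{H}}$. Since $A$ is closed and $\mathcal{D}(A)\subset H^2$, the closed graph theorem gives
\[
\|u\|_{H^2}\le C\big(\|u\|_{L^2}+\|Au\|_{L^2}\big)\le C'\|Au\|_{L^2}.
\]
The last inequality uses positive definiteness, i.e. $\|u\|_{L^2}\le C\|Au\|_{L^2}$. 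Hence $\dot{\mathbb{H}}^2\hookrightarrow H^2(\mathcal{O};\mathbb{R}^N)$. Because $A$ is self-adjoint and positive, $\dot{\mathbb{H}}^s=\mathcal{D}(A^{s/2})=[\mathbb{H},\mathcal{D}(A)]_{s/2}$ (complex interpolation, standard for positive self-adjoint operators). The $C^2$-extension property of $\mathcal{O}$ (or the torus structure) gives $[L^2,H^2]_{s/2}=H^s(\mathcal{O})$. Interpolating the embedding $\dot{\mathbb{H}}^2\hookrightarrow H^2$ with the identity on $L^2$ then yields $\dot{\mathbb{H}}^s\hookrightarrow H^s(\mathcal{O};\mathbb{R}^N)$ for all $s\in[0,2]$.

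\emph{Step 2 (proof of (i)).} Combine Step 1 with the Sobolev embedding $H^s(\mathcal{O})\hookrightarrow L^p(\mathcal{O})$, valid on extension domains for $p\in[2,\infty)$ with $\frac1p\ge\frac12-\frac sd$. This is proved on $\mathbb{R}^d$ and transferred via the extension operator; the case $p=\infty$ is excluded, which covers the borderline $s=d/2$. Apply it componentwise. For density, $\dot{\mathbb{H}}^s\supset\mathcal{D}(A)\supset\operatorname{span}$ of eigenfunctions, which is dense in $L^2$, not in $L^p$. The density claim in $L^p$ is therefore the delicate point. I would argue it by noting that $\dot{\mathbb{H}}^s\supseteq\dot{\mathbb{H}}^2$, and that the semigroup $S(t)$ maps $L^p$ into $\dot{\mathbb{H}}^2$ with $S(t)f\to f$ in $L^p$ whenever $S$ extends to a $C_0$-semigroup on $L^p$. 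Failing such an $L^p$-extension, the density statement should be read as the image of $\dot{\mathbb{H}}^s$ being dense in $L^p$ through smooth compactly supported functions in $\mathcal{D}(A)$. I expect this density point, together with the interpolation identification in Step 1, to be the main obstacle, since it genuinely depends on the boundary conditions encoded in $A$.

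\emph{Step 3 (proof of (ii) by duality).} Let $q\in(1,2]$ with $\frac1q\le\frac12+\frac sd$, and set $p=q'\in[2,\infty)$. Then $\frac1p=1-\frac1q\ge\frac12-\frac sd$, so (i) gives $\|v\|_{L^p}\le C\|v\|_{\dot{\mathbb{H}}^s}$. The space $\dot{\mathbb{H}}^{-s}$ is the completion of $\mathbb{H}$ under $\|A^{-s/2}\cdot\|$, equivalently the dual of $\dot{\mathbb{H}}^s$ via the $\mathbb{H}$-pairing. For $f\in L^q\cap L^2$ this gives
\[
\|f\|_{\dot{\mathbb{H}}^{-s}}=\sup_{\|v\|_{\dot{\mathbb{H}}^s}\le1}(f,v)_{\mathbb{H}}\le\|f\|_{L^q}\sup\|v\|_{L^p}\le C\|f\|_{L^q}.
\]
The map then extends by density of $L^2\cap L^q$ in $L^q$ to a continuous embedding; injectivity follows from the density in (i).
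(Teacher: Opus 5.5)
Your Steps~1 and~3 are correct. They are the standard route to the continuity statements; the paper gives no proof and simply quotes the lemma as standard.
\begin{itemize}
\item The closed graph theorem gives $\dot{\mathbb{H}}^2\hookrightarrow H^2(\mathcal{O};\mathbb{R}^N)$.
\item Interpolation gives $\dot{\mathbb{H}}^s=[\mathbb{H},\mathcal{D}(A)]_{s/2}\hookrightarrow[L^2,H^2]_{s/2}\hookrightarrow H^s$. The last inclusion needs an extension operator bounded simultaneously on $L^2$ and $H^2$, which is what the extension property should be taken to supply.
\item Duality with $p=q'$ yields $\|f\|_{\dot{\mathbb{H}}^{-s}}\le C\|f\|_{L^q}$.
\end{itemize}
The genuine gap is the one you flag yourself: the word ``densely'' in (i), and with it the injectivity in (ii), which you correctly reduce to it. Neither of your two suggestions follows from the hypotheses. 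Nothing says that $S(t)$ is strongly continuous on $L^p$, or that $\mathcal{D}(A)$ contains smooth compactly supported functions.

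In fact no argument can close this gap, because under the stated hypotheses alone the claim is false. Here is a counterexample.
\begin{itemize}
\item Take $d=N=1$, $\mathcal{O}=(0,1)$ and $g(x)=x^{-1/2}$. Then $g\in L^r(0,1)$ for all $r<2$, but $g\notin L^2(0,1)$.
\item The functional $\ell(v)=\int_0^1 gv\,dx$ is continuous on $H^2(0,1)\subset L^\infty(0,1)$, but unbounded in the $L^2$-norm on $C_c^\infty(0,1)$. Being the kernel of an $L^2$-unbounded functional on a dense subspace, $\ker\ell\cap C_c^\infty(0,1)$ is dense in $L^2(0,1)$.
\item Gram--Schmidt therefore produces an orthonormal basis $(f_n)_{n\in\mathbb{N}}$ of $L^2(0,1)$ lying in $\ker\ell\cap C_c^\infty(0,1)$.
\item Set $\lambda_n:=n(1+\|f_n\|_{H^2})$ and $Au:=\sum_n\lambda_n(u,f_n)f_n$. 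This $A$ is self-adjoint, satisfies $A\ge I$, and has compact inverse.
\item For $u\in\mathcal{D}(A)$, Cauchy--Schwarz gives $\sum_n|(u,f_n)|\,\|f_n\|_{H^2}\le(\sum_n\lambda_n^2|(u,f_n)|^2)^{1/2}(\sum_n n^{-2})^{1/2}<\infty$. So the expansion of $u$ converges in $H^2$, hence $\mathcal{D}(A)\subset H^2(0,1)$ and $\ell$ vanishes on $\mathcal{D}(A)$.
\item Now $\mathcal{D}(A)$ is dense in $\dot{\mathbb{H}}^s$, $\dot{\mathbb{H}}^s\hookrightarrow L^p$ by your own Step~2, and $\ell$ is continuous on $L^p$ for $p>2$. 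So $\ell$ vanishes on $\dot{\mathbb{H}}^s$ for every $s\in(0,2]$ and every admissible $p\in(2,\infty)$.
\end{itemize}
Consequently the $L^p$-closure of $\dot{\mathbb{H}}^s$ is the proper subspace $\ker\ell$. For $q=p'\in(1,2)$, the map $L^q\to\dot{\mathbb{H}}^{-s}$ sends $g\neq0$ to $0$, so (ii) is not injective either. This also shows concretely why your semigroup idea fails: $\ell(S(t)f)=0$ for all $t>0$, so $S(t)f\not\to f$ in $L^p$ whenever $\ell(f)\neq0$.

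What is missing is therefore an extra hypothesis on $A$, not a sharper argument. One sufficient hypothesis is $C_c^\infty(\mathcal{O};\mathbb{R}^N)\subset\mathcal{D}(A)$. This holds for the Dirichlet realizations with $\mathcal{D}(A)=H^2\cap H^1_0$ used in Section~\ref{sec:examples}; on the torus, the analogue $C^\infty(\mathbb{T}^d)\subset\mathcal{D}(A)$ holds for $\mathcal{D}(A)=H^2(\mathbb{T}^d)$. Under it, density in $L^p$ for $p<\infty$ is immediate from $C_c^\infty\subset\mathcal{D}(A)\subset\dot{\mathbb{H}}^s$, and your duality argument then gives injectivity in (ii). The continuity bounds themselves, which are what the Lipschitz estimates in Section~\ref{sec:examples} rely on, are fully covered by your proof.
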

For simplicity of exposition, in this section, we restrict our attention to open, connected, bounded domains $\mathcal{O} \subset \mathbb{R}^d,\,d=1,2,3,$ possessing the $C^2$-extension property. The arguments and results presented below may be carried out, with minor modifications, for other domains \(\mathcal{O}\) considered in Lemma~\ref{lem:sobolev_embedding}. Also, we restrict $U=\mathbb{H}=L^2(\mathcal{O};\mathbb{R}^N)$, for \(N\in\mathbb{N}\), and assume that $W$ is a $Q$-Wiener process defined on $U=\mathbb{H}$. Since \(Q:U\to U\) is self-adjoint and positive semidefinite, there exists a sequence of non-negative real numbers \((q_n)_{n\in\mathbb{N}}\) and an orthonormal basis \((h_n)_{n\in\mathbb{N}}\) in \(U=\mathbb{H}\) such that \(Q h_n = q_n h_n\). We define the operator \(A:\mathcal{D}(A)\subset \mathbb{H}\rightarrow\mathbb{H}\)  in all the examples below such that it satisfies Assumption~\ref{ass:operator_A} and $\mathcal{D}(A) \subset H^2(\mathcal{O}; \mathbb{R}^N)$, and therefore there exists an increasing sequence of positive real numbers $(\eta_n)_{n\in\mathbb{N}}$ and an orthonormal basis of eigenvectors $(e_n)_{n\in\mathbb{N}}$ in $\mathbb{H}$ such that $A e_n = \eta_n e_n$ for all $n \in \mathbb{N}$, and 
    \[
        0 < \eta_1 \le \eta_2 \le \cdots \le \eta_n \le \cdots, \quad \lim_{n \to \infty} \eta_n = \infty.
    \]
Further, we use the symbols $X(t,\omega)$ and $X(t,\boldsymbol{x},\omega)$, with the identification $X(t,\omega)(\boldsymbol{x}) = X(t,\boldsymbol{x},\omega)$, to denote the solutions of the evolution equation \eqref{eq:SEE} and the stochastic model problems considered below, respectively. In order to apply our theoretical results, we recast these model problems into a stochastic evolution equation of the form \eqref{eq:SEE}.

We now start illustrating our theoretical findings by first applying them to the following linear stochastic convection-diffusion equation with rough initial data. 

\subsection{The stochastic convection-diffusion equation}

We consider the following stochastic convection-diffusion equation with additive noise:
\begin{equation} \label{eq:convection_diffusion}
\begin{aligned}
    \frac{\partial X}{\partial t}(t,\boldsymbol{x},\omega) &= \nabla \cdot \big(a(\boldsymbol{x})\nabla X(t,\boldsymbol{x},\omega)\big) - \boldsymbol{b}(\boldsymbol{x}) \cdot \nabla X(t,\boldsymbol{x},\omega) - c(\boldsymbol{x})X(t,\boldsymbol{x},\omega) \\
    &\quad + g(t,\omega,\boldsymbol{x})\frac{\partial W}{\partial t}(t,\boldsymbol{x},\omega), \; (t,\boldsymbol{x},\omega) \in (0,T] \times \mathcal{O}\times\Omega, \\
    X(t,\boldsymbol{x},\omega) &= 0, \;(t,\boldsymbol{x},\omega) \in (0,T] \times \partial\mathcal{O}\times\Omega, \\
    X(0,\boldsymbol{x},\omega) &= X_0(\boldsymbol{x},\omega), \; (\boldsymbol{x},\omega) \in \mathcal{O}\times\Omega,
\end{aligned}
\end{equation}
where $a \in C^1(\bar{\mathcal{O}}; \mathbb{R}^{d \times d})$ is symmetric ($a_{ij} = a_{ji}$) and satisfies the uniform ellipticity condition, i.e., there exists a constant $a_0 > 0$ such that 
\begin{equation*}
    \sum_{i,j=1}^d a_{ij}(\boldsymbol{x}) \xi_i \xi_j \ge a_0 |\boldsymbol{\xi}|^2 \quad \forall \boldsymbol{x} \in \bar{\mathcal{O}}, \; \forall \boldsymbol{\xi} \in \mathbb{R}^d,
\end{equation*}
 $\boldsymbol{b} \in C^1(\bar{\mathcal{O}}; \mathbb{R}^d)$, $c \in L^\infty(\mathcal{O}; \mathbb{R})$, \(X_0\in L^p(\Omega, \mathcal{F}_0; \dot{\mathbb{H}}^{\mu}),\;p\geq 2.\) The diffusion coefficient $g \colon [0,T]\times\Omega \times \mathcal{O} \to \mathbb{R}$ is measurable from the measurable space $(\Omega_T \times \mathcal{O}, \mathcal{P}_T \otimes \mathcal{B}(\mathcal{O}))$ to $(\mathbb{R}, \mathcal{B}(\mathbb{R}))$, \(\Omega_T\coloneqq [0,T]\times\Omega,\) and satisfies \[\sup_{(t,\omega,\boldsymbol{x})\in [0,T]\times\Omega \times \mathcal{O}}|g(t,\omega,\boldsymbol{x})| \le C_g \] for some constant $C_g > 0$.

To recast the SPDE \eqref{eq:convection_diffusion} as an abstract stochastic evolution equation of the form \eqref{eq:SEE}, we define the linear operator $A u = -\nabla \cdot (a(\boldsymbol{x})\nabla u) + u$ with domain $\mathcal{D}(A) = H^2(\mathcal{O}) \cap H_0^1(\mathcal{O})$. The operator $A$ satisfies Assumption~\ref{ass:operator_A} and generates a family of fractional order spaces $\dot{\mathbb{H}}^\gamma \coloneqq \mathcal{D}(A^{\gamma/2})$ for $\gamma \in \mathbb{R}$. We then define the drift mapping $F \colon [0,T] \times \Omega \times \dot{\mathbb{H}}^\mu \to \dot{\mathbb{H}}^{-1}$ such that, for $(t,\omega,u) \in [0,T] \times \Omega \times \dot{\mathbb{H}}^\mu$,
\begin{align} \label{eq:drift_convection}
    F(t,\omega,u) \coloneqq \begin{cases}
        \boldsymbol{b}\cdot\nabla u - cu + u & \text{if } u \in \mathbb{H},\\
        0 & \text{otherwise.}
    \end{cases}
\end{align}
Clearly, the mapping $F$ is well-defined and satisfies the measurability condition of Assumption~\ref{ass:measurability_drift_diff}, as well as the boundedness and global Lipschitz conditions of Assumption~\ref{ass:global_lips_nonlin} with parameters $\nu=0$, $\alpha=1$, and $\hat{\alpha}=0$. 

Next, setting $U_0 = Q^{1/2}(\mathbb{H})$, we define the diffusion mapping $G \colon [0,T] \times \Omega \times \dot{\mathbb{H}}^\mu \to \mathrm{HS}(U_0, \dot{\mathbb{H}}^{-\beta})$ such that, for $(t,\omega,u) \in [0,T] \times \Omega \times \dot{\mathbb{H}}^\mu$,
\begin{equation}\label{eq:operator_G_additive}
G(t,\omega,u) \coloneqq \mathcal{M}_{g(t,\omega)},
\end{equation}
where $\mathcal{M}_h \colon U_0 \subseteq \mathbb{H} \to \dot{\mathbb{H}}^{-\beta}$ denotes the multiplication operator defined by $\mathcal{M}_h(\phi) = h\phi$. Because $g$ is measurable and uniformly bounded, $G$ is well-defined and satisfies the measurability condition in Assumption~\ref{ass:measurability_drift_diff}. For trace-class noise (i.e., a $Q$-Wiener process with $\mathrm{Tr}(Q) < \infty$), we obtain the following bound for any $u \in \mathbb{H}$:
\begin{align}\label{eq:G_well-definedness_con_diff_colored_noise}
    \nonumber \|G(t,\omega, u)\|_{\mathrm{HS}(U_0, \dot{\mathbb{H}}^{-\beta})}^2 &= \sum_{n=1}^\infty \| G(t,\omega, u)Q^{1/2}h_n\|_{\dot{\mathbb{H}}^{-\beta}}^2 = \sum_{n=1}^\infty\|g(t,\omega)Q^{1/2}h_n\|_{\dot{\mathbb{H}}^{-\beta}}^2\\
    &\le \sum_{n=1}^\infty\|g(t,\omega)Q^{1/2}h_n\|_{\mathbb{H}}^2 \le \|g(t,\omega)\|_{L^\infty(\mathcal{O})}^2 \mathrm{Tr}(Q) < \infty.
\end{align}
However, for space-time white noise ($Q = \mathrm{Id}_{\mathbb{H}}$), we have:
\begin{align}\label{eq:G_well-definedness_con_diff_space_time_white_noise}
    \nonumber \|G(t,\omega, u)\|_{\mathrm{HS}(U_0, \dot{\mathbb{H}}^{-\beta})}^2 &= \sum_{n=1}^\infty \| G(t,\omega, u)e_n\|_{\dot{\mathbb{H}}^{-\beta}}^2 = \sum_{n=1}^\infty\|g(t,\omega)e_n\|_{\dot{\mathbb{H}}^{-\beta}}^2\\
    \nonumber &= \sum_{n=1}^\infty \sum_{m=1}^\infty \eta_m^{-\beta} \langle g(t,\omega,\cdot) e_n, e_m \rangle_{\dot{\mathbb{H}}^{-\beta},\dot{\mathbb{H}}^\beta}^2 \\
    \nonumber &= \sum_{m=1}^\infty \eta_m^{-\beta} \sum_{n=1}^\infty \langle g(t,\omega,\cdot)  e_n, e_m \rangle_{\dot{\mathbb{H}}^{-\beta},\dot{\mathbb{H}}^\beta}^2 \\
    \nonumber &= \sum_{m=1}^\infty \eta_m^{-\beta} \sum_{n=1}^\infty ( g(t,\omega,\cdot)  e_m, e_n )_{\mathbb{H}}^2 \\
    \nonumber &= \sum_{m=1}^\infty \eta_m^{-\beta} \| g(t,\omega,\cdot)  e_m\|_{\mathbb{H}}^2 \\
    \nonumber &\le \| g(t,\omega,\cdot) \|_{L^\infty(\mathcal{O})}^2 \sum_{m=1}^\infty \eta_m^{-\beta}  \\
    &\le C \| g(t,\omega,\cdot) \|_{L^\infty(\mathcal{O})}^2 \sum_{m=1}^\infty m^{-2\beta/d} < \infty, \quad \text{provided } \beta > \frac{d}{2},
\end{align}
where the final inequality in \eqref{eq:G_well-definedness_con_diff_space_time_white_noise} relies on Weyl's law, which asserts that $C_1 m^{2/d} \le \eta_m \le C_2 m^{2/d}$ for positive constants $C_1$ and $C_2$. Thus, the mapping $G$ satisfies the boundedness and global Lipschitz conditions of Assumption~\ref{ass:global_lips_nonlin} with parameters $\nu=0$, $\hat{\beta}=0$, and $\beta \ge 0$ whenever $\mathrm{Tr}(Q) < \infty$. In the case $Q = \mathrm{Id}_{\mathbb{H}}$, the same assumptions hold provided $\beta > d/2$. 

Applying Theorem~\ref{thm:main_global_existence}, we establish the existence of a unique global mild solution to the stochastic convection-diffusion equation \eqref{eq:convection_diffusion}. This solution possesses all the (pathwise) regularity properties derived in Theorem~\ref{thm:main_global_existence} and Theorem~\ref{thm:pathwise_regularity} under the following parameter constraints:
$$
\begin{aligned}
    &\text{(i)}~ {-1} < \mu \le 0, \quad &&\text{(ii)}~ \nu = 0, \quad &&\text{(iii)}~ \alpha = 1, \\
    &\text{(iv)}~ \beta < 1, \quad &&\text{(v)}~ \beta > \frac{d}{2} \text{ when } Q = \mathrm{Id}_{\mathbb{H}} \text{ and } \beta \ge 0 \text{ when } \mathrm{Tr}(Q) < \infty.
\end{aligned}
$$
Crucially, because $G$ is independent of $u$, equation \eqref{eq:convection_diffusion} admits a unique global mild solution under the following relaxed parameter constraints (see Remark~\ref{rem:global_case_comparison}(i)):
$$
\begin{aligned}
    &\text{(i)}~ {-2} < \mu \le 0, \quad &&\text{(ii)}~ \nu = 0, \quad &&\text{(iii)}~ \alpha = 1, \\
    &\text{(iv)}~ \beta < 1, \quad &&\text{(v)}~ \beta > \frac{d}{2} \text{ when } Q = \mathrm{Id}_{\mathbb{H}} \text{ and } \beta \ge 0 \text{ when } \mathrm{Tr}(Q) < \infty.
\end{aligned}
$$
The extended range $-2 < \mu \le 0$ demonstrates that problem \eqref{eq:convection_diffusion} is well-posed even for distribution-valued initial data. In fact, in one spatial dimension, the problem is well-posed even when the initial condition is the derivative of the Dirac measure, $\delta'_{\boldsymbol{x}_0}$ for $\boldsymbol{x}_0 \in \mathcal{O}$.

\noindent \textbf{\textit{SPDEs with multiplicative noise:}}  Henceforth, we consider semilinear stochastic PDE models wherein the nonlinear diffusion coefficient $g \colon [0,T] \times \Omega \times \mathcal{O} \times \mathbb{R}^N \to \mathbb{R}^N$ is measurable from the measurable space $(\Omega_T \times \mathcal{O} \times \mathbb{R}^N, \mathcal{P}_T \otimes \mathcal{B}(\mathcal{O}) \otimes \mathcal{B}(\mathbb{R}^N))$ to $(\mathbb{R}^N, \mathcal{B}(\mathbb{R}^N))$, and satisfies, for some $\sigma \ge 0$ and $\theta \ge 0$, the boundedness condition
\begin{align}
    \label{eq:g_boundedness} \sup_{(t,\omega) \in \Omega_T} t^{\sigma} \|g(t,\omega,\cdot,0)\|_{\mathbb{H}} \le C_2,
\end{align}
along with the local Lipschitz condition
\begin{align}
    \label{eq:g_locally_Lipschitz} \sup_{(t,\omega,\boldsymbol{x}) \in \Omega_T \times \mathcal{O}} t^{\sigma} |g(t,\omega,\boldsymbol{x},y_2) - g(t,\omega,\boldsymbol{x},y_1)| \le C_1(1+|y_1|+|y_2|)^\theta |y_1-y_2| \quad \forall y_1, y_2 \in \mathbb{R}^N,
\end{align}
where $\mathbb{H} = L^2(\mathcal{O}; \mathbb{R}^N)$, $|\cdot|$ denotes the Euclidean norm on $\mathbb{R}^N$, and $C_1, C_2 > 0$ are constants.

The following lemma plays a crucial role in casting these SPDE models as abstract stochastic evolution equations of the form \eqref{eq:SEE}.

\begin{lemma}\label{lem:cond_on_diffusion}
Assume that the nonlinear diffusion coefficient $g \colon [0,T] \times \Omega \times \mathcal{O} \times \mathbb{R}^N \to \mathbb{R}^N$ is measurable and satisfies conditions \eqref{eq:g_boundedness} and \eqref{eq:g_locally_Lipschitz}. Let $\mu \le \nu$, $\beta \ge 0$, and $\nu \ge \frac{\theta d}{2(\theta+1)}$ for $\theta \ge 0$. Define the mapping $G \colon [0,T] \times \Omega \times \dot{\mathbb{H}}^\mu \to \mathrm{HS}(U_0, \dot{\mathbb{H}}^{-\beta})$ with $U_0 = Q^{1/2}(\mathbb{H})$ such that, for $(t,\omega,u) \in [0,T] \times \Omega \times \dot{\mathbb{H}}^\mu$,
\begin{equation}\label{eq:nemytskii_G}
G(t,\omega,u) = 
\begin{cases}
    \mathcal{M}_{\mathcal{N}_g(t,\omega,u)}  & \text{if } u \in \dot{\mathbb{H}}^\nu, \\
    0 & \text{otherwise,}
\end{cases}
\end{equation}
where $\mathcal{M}_h \colon U_0 \subseteq \mathbb{H} \to \dot{\mathbb{H}}^{-\beta}$ is the multiplication operator defined by $\mathcal{M}_h(\phi) = h\phi$, and the mapping $\mathcal{N}_g \colon [0,T] \times \Omega \times \dot{\mathbb{H}}^\nu \to \mathbb{H}$ is the Nemytskii operator given by 
$$(\mathcal{N}_g(t,\omega,u))(\boldsymbol{x}) = \boldsymbol{1}_{(0,T]}(t)g(t,\omega,\boldsymbol{x},u(\boldsymbol{x})), \quad \boldsymbol{x} \in \mathcal{O}.$$
If $\beta > \max(d/2, (2d-1)/2)$ or the eigensystem $\{q_n, h_n\}_{n \in \mathbb{N}}$ of $Q$ satisfies \textnormal{(see \cite[Eq. (25)]{MR3912731})}
\begin{align}
    \label{eq:smooth_noise} \sum_{n=1}^\infty q_n\|h_n\|_{L^\infty(\mathcal{O};\mathbb{R}^N)}^2 < \infty,
\end{align}
then the mapping $G$ is well-defined and satisfies the measurability condition in Assumption~\ref{ass:measurability_drift_diff}. Furthermore, $G$ satisfies:
\begin{enumerate}[label={\upshape(\roman*)}]
    \item the boundedness and the global Lipschitz condition of Assumption~\ref{ass:global_lips_nonlin} with parameters $\theta=0$ and $\hat{\beta}=\sigma$, and
    \item the boundedness and the local Lipschitz condition of Assumption~\ref{ass:local_lip_stoch} with parameters $\theta \ge 0$ and $\hat{\beta} = \sigma + (\theta+1)\lambda$.
\end{enumerate}
\end{lemma}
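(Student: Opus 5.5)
The central step is to reduce everything to an estimate of the form
\[
\|\mathcal{M}_h\|_{\mathrm{HS}(U_0,\dot{\mathbb{H}}^{-\beta})}\le C\|h\|_{\mathbb{H}},\qquad h\in\mathbb{H},
\]
valid under either of the two alternative hypotheses. Once this holds, $G$ inherits the needed bounds from the $\mathbb{H}$-norm properties of the Nemytskii operator $\mathcal{N}_g$.

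\textbf{Step 1 (the key estimate).} Under \eqref{eq:smooth_noise} I compute with the orthonormal basis $\{q_n^{1/2}h_n\}$ of $U_0$ and use $\dot{\mathbb{H}}^0\hookrightarrow\dot{\mathbb{H}}^{-\beta}$ for $\beta\ge0$:
\[
\sum_n q_n\|hh_n\|_{\dot{\mathbb{H}}^{-\beta}}^2\le C\sum_n q_n\|h\|_{\mathbb{H}}^2\|h_n\|_{L^\infty}^2 .
\]
In the white-noise-type case $\beta>\max(d/2,(2d-1)/2)$, I first note that $Q$ is bounded. Hence it suffices to bound $\sum_m\eta_m^{-\beta}\|he_m\|_{\mathbb{H}}^2$ by the duality computation in \eqref{eq:G_well-definedness_con_diff_space_time_white_noise}. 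Since $h$ is only in $L^2$, I instead need $\|e_m\|_{L^\infty}$. For $\beta$ large I would use $\|A^{-\beta/2}\|_{\mathrm{HS}(\mathbb{H},L^\infty)}$-type bounds: $\dot{\mathbb{H}}^{s}\hookrightarrow L^\infty$ for $s>d/2$, so $\|e_m\|_{L^\infty}\le C\eta_m^{s/2}$. Then
\[
\sum_m\eta_m^{-\beta+s}\lesssim\sum_m m^{-2(\beta-s)/d}<\infty
\]
once $\beta>s+d/2$ with $s$ slightly above $d/2$. That only requires $\beta>d$, which is stronger than the stated threshold. Getting exactly $(2d-1)/2$ is the step I expect to be the main obstacle. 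I would try interpolating, or invoking Lemma~\ref{lem:sobolev_embedding}(ii) to place $he_m\in L^q\hookrightarrow\dot{\mathbb{H}}^{-s}$ with $e_m\in L^{p}$ bounded via $\dot{\mathbb{H}}^{r}$ with $r<2$. Optimising the exponents is what should yield the threshold $\beta>(2d-1)/2$.

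\textbf{Step 2 (well-definedness and measurability).} Measurability of $G$ follows from measurability of $g$: $\mathcal{N}_g(t,\omega,u)$ is jointly measurable in $(t,\omega,u)$ by a Carathéodory-type argument, since $g$ is continuous in $y$ by \eqref{eq:g_locally_Lipschitz}. The map $h\mapsto\mathcal{M}_h$ is continuous linear by Step 1, and the indicator of the Borel set $\dot{\mathbb{H}}^\nu\subset\dot{\mathbb{H}}^\mu$ is measurable.

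\textbf{Step 3 (Lipschitz bounds).} For $u,v\in\dot{\mathbb{H}}^\nu$, \eqref{eq:g_locally_Lipschitz} and Hölder's inequality with $\frac12=\frac{\theta}{2(\theta+1)}+\frac{1}{2(\theta+1)}$ give
\[
t^\sigma\|\mathcal{N}_g(u)-\mathcal{N}_g(v)\|_{\mathbb{H}}\le C\big(1+\|u\|_{L^{p}}+\|v\|_{L^{p}}\big)^\theta\|u-v\|_{L^{p}},\qquad p=2(\theta+1).
\]
The condition $\nu\ge\theta d/(2(\theta+1))$ is exactly $\frac1p\ge\frac12-\frac\nu d$, so Lemma~\ref{lem:sobolev_embedding}(i) gives $\|\cdot\|_{L^p}\le C\|\cdot\|_{\dot{\mathbb{H}}^\nu}$. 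For $\theta=0$ this gives (i) with $\hat\beta=\sigma$; the bound on $G(t,0)$ is \eqref{eq:g_boundedness}. For (ii), on the set $\|u\|_{\lambda,t},\|v\|_{\lambda,t}\le R$ I use $\|u\|_{\dot{\mathbb{H}}^\nu}\le t^{-\lambda}\|u\|_{\lambda,t}\le t^{-\lambda}R$. This yields the factor $(1+2t^{-\lambda}R)^\theta\le C(R)t^{-\theta\lambda}$. Likewise $\|u-v\|_{\dot{\mathbb{H}}^\nu}\le t^{-\lambda}\|u-v\|_{\lambda,t}$, producing the total weight $t^{-\sigma-(\theta+1)\lambda}$, i.e.\ $\hat\beta=\sigma+(\theta+1)\lambda$.
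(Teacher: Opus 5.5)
Your proof has a genuine gap in Step~1, in the case $\beta>\max(d/2,(2d-1)/2)$. You bound each $\|e_m\|_{L^\infty}$ through the generic embedding $\dot{\mathbb{H}}^s\hookrightarrow L^\infty$, $s>d/2$. That gives $\|e_m\|_{L^\infty}^2\lesssim\eta_m^{s}$, so the series converges only for $\beta>d$, and you leave the stated threshold open.

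This loss matters. In this lemma $\nu\ge0$, so Assumption~\ref{ass:parameters} forces $\beta<1-\nu\le1$. Hence $d=1$ is the only dimension where the hypothesis $\beta>\max(d/2,(2d-1)/2)$ is usable, and there your argument needs $\beta>1$. All the space-time white noise applications (the parabolic Anderson model with Dirac data, Allen--Cahn with $Q=\mathrm{Id}_{\mathbb{H}}$, etc.) would therefore be lost.

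The repairs you suggest cannot work as framed. After reducing to $\sum_m\eta_m^{-\beta}\|he_m\|_{\mathbb{H}}^2$ with $h$ only in $L^2$, any termwise bound costs exactly $\sup_{\|h\|_{\mathbb{H}}\le1}\|he_m\|_{\mathbb{H}}=\|e_m\|_{L^\infty}$. A Sobolev embedding applied to $e_m$ as a generic element of $\dot{\mathbb{H}}^s$ cannot beat $\eta_m^{s/2}$ with $s>d/2$, whether or not you interpolate.

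The paper closes this with a specific eigenfunction estimate, the sup-norm bound $\|e_m\|_{L^\infty}\le C\eta_m^{(d-1)/4}$ for Dirichlet eigenfunctions (Grieser). Combined with Weyl's law it gives $\sum_m\eta_m^{-\beta}\|e_m\|_{L^\infty}^2\lesssim\sum_m m^{\frac{2}{d}(\frac{d-1}{2}-\beta)}$. This is finite exactly when $\beta>(2d-1)/2$, which is where the threshold comes from; for $d=1$ it is just uniform boundedness of the sine eigenfunctions.

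Alternatively, avoid termwise bounds altogether. Write $\sum_m\eta_m^{-\beta}\|he_m\|_{\mathbb{H}}^2=\int_{\mathcal{O}}|h(x)|^2S(x)\,dx$ with $S(x)=\sum_m\eta_m^{-\beta}|e_m(x)|^2$, and let $S_M$ be the partial sum. Test with $v=\sum_{m\le M}\eta_m^{-\beta}e_m(x)e_m$ (componentwise in the vector case), which satisfies $v(x)=\|v\|_{\dot{\mathbb{H}}^\beta}^2=S_M(x)$. The embedding $\dot{\mathbb{H}}^\beta\hookrightarrow C(\bar{\mathcal{O}})$ then gives $S_M(x)\le C\,S_M(x)^{1/2}$, so $\sup_x S(x)<\infty$ for every $\beta>d/2$. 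This even improves the stated range when $d\ge2$.

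Once the key estimate is in place, your Steps~2 and~3 match the paper's argument and are correct. These are the measurability argument, H\"older with exponent $2(\theta+1)$, the embedding $\dot{\mathbb{H}}^\nu\hookrightarrow L^{2(\theta+1)}$, and the weight $t^{-\sigma-(\theta+1)\lambda}$ obtained from $\|u\|_{\dot{\mathbb{H}}^\nu}\le t^{-\lambda}\|u\|_{\lambda,t}$.
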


\begin{proof}
Since $\nu \ge \frac{\theta d}{2(\theta +1)}$, Sobolev embedding (Lemma~\ref{lem:sobolev_embedding}) yields $\dot{\mathbb{H}}^{\nu} \hookrightarrow L^{2(\theta+1)}(\mathcal{O}; \mathbb{R}^N)$. Therefore, utilizing conditions \eqref{eq:g_boundedness} and \eqref{eq:g_locally_Lipschitz}, we deduce that $\boldsymbol{1}_{(0,T]}(t)g(t,\omega,\cdot,u(\cdot)) \in \mathbb{H}$ for all $(t,\omega,u) \in [0,T] \times \Omega \times \dot{\mathbb{H}}^\nu$. This ensures the Nemytskii operator $\mathcal{N}_g \colon [0,T] \times \Omega \times \dot{\mathbb{H}}^\nu \to \mathbb{H}$ is well-defined.

Let $\{h_n\}_{n \in \mathbb{N}}$ be an orthonormal basis of $\mathbb{H}$ consisting of the eigenvectors of the covariance operator $Q$. The sequence $\{Q^{1/2}h_n\}_{n \in \mathbb{N}}$ thus forms an orthonormal basis for $U_0 = Q^{1/2}(\mathbb{H})$. Let $\{e_m\}_{m \in \mathbb{N}}$ denote the orthonormal basis of $\mathbb{H}$ consisting of the eigenvectors of the linear operator $A$, with corresponding eigenvalues $\eta_m$. We proceed by considering two cases:

\noindent\textbf{\textit{Case 1:}} $\big(\beta > \max(d/2, (2d-1)/2)\big)$. 
In this case, since $\dot{\mathbb{H}}^\beta \hookrightarrow L^{\infty}(\mathcal{O}; \mathbb{R}^N)$ and $h\phi \in L^{1}(\mathcal{O}; \mathbb{R}^N)$ for all $h \in \mathbb{H}$ and $\phi \in U_0 = Q^{1/2}(\mathbb{H})$, the multiplication operator $\mathcal{M}_h \colon U_0 \subseteq \mathbb{H} \to \dot{\mathbb{H}}^{-\beta}$ is well-defined. The measurability of $G$ follows naturally from the measurability of $g$. 

Because $Q^{1/2}h_n \in \mathbb{H}$ and $g(t,\omega,\cdot,u(\cdot))e_m \in \mathbb{H}$ (since $e_m \in L^\infty(\mathcal{O}; \mathbb{R}^N)$), the duality pairing $\langle\cdot,\cdot\rangle_{\dot{\mathbb{H}}^{-\beta}, \dot{\mathbb{H}}^\beta}$ coincides with the standard inner product in $\mathbb{H}$. For any $u \in \dot{\mathbb{H}}^\nu$, utilizing the self-adjoint and positive semidefinite properties of the covariance operator $Q$, we have: 
\begin{align}\label{eq:G_well-definedness}
    \nonumber \|G(t,\omega, u)\|_{\mathrm{HS}(U_0, \dot{\mathbb{H}}^{-\beta})}^2 &= \sum_{n=1}^\infty \| G(t,\omega, u)Q^{1/2}h_n\|_{\dot{\mathbb{H}}^{-\beta}}^2 \\  \nonumber &=\sum_{n=1}^\infty\|\mathcal{M}_{\mathcal{N}_g(t,\omega,u)}Q^{1/2}h_n\|_{\dot{\mathbb{H}}^{-\beta}}^2\\
    \nonumber &= \sum_{n=1}^\infty\|\mathcal{N}_g(t,\omega,u)Q^{1/2}h_n\|_{\dot{\mathbb{H}}^{-\beta}}^2\\
    \nonumber &= \boldsymbol{1}_{(0,T]}(t) \sum_{m=1}^\infty \eta_m^{-\beta} \sum_{n=1}^\infty \langle g(t,\omega,\cdot,u(\cdot)) Q^{1/2}h_n, e_m \rangle_{\dot{\mathbb{H}}^{-\beta},\dot{\mathbb{H}}^\beta}^2 \\
    &= \boldsymbol{1}_{(0,T]}(t) \sum_{m=1}^\infty \eta_m^{-\beta} \sum_{n=1}^\infty \big( Q^{1/2}h_n, g(t,\omega,\cdot,u(\cdot)) e_m \big)_{\mathbb{H}}^2.
\end{align}
To evaluate the inner sum over $n$, we utilize the self-adjointness of $Q^{1/2}$ and apply Parseval's identity with respect to the orthonormal basis $\{h_n\}_{n \in \mathbb{N}}$:
\begin{align*}
    \sum_{n=1}^\infty \big( Q^{1/2}h_n, g(t,\omega,\cdot,u(\cdot)) e_m \big)_{\mathbb{H}}^2 &= \sum_{n=1}^\infty \big( h_n, Q^{1/2} \big( g(t,\omega,\cdot,u(\cdot)) e_m \big) \big)_{\mathbb{H}}^2 \\
    &= \big\| Q^{1/2} \big( g(t,\omega,\cdot,u(\cdot)) e_m \big) \big\|_{\mathbb{H}}^2 \\
    &= \big( Q \big( g(t,\omega,\cdot,u(\cdot)) e_m \big), g(t,\omega,\cdot,u(\cdot)) e_m \big)_{\mathbb{H}} \\
    &\le \|Q\|_{\mathcal{L}(\mathbb{H})} \|g(t,\omega,\cdot,u(\cdot)) e_m\|_{\mathbb{H}}^2 \\
    &\le \|Q\|_{\mathcal{L}(\mathbb{H})} \|g(t,\omega,\cdot,u(\cdot))\|_{\mathbb{H}}^2 \|e_m\|_{L^\infty(\mathcal{O};\mathbb{R}^N)}^2.
\end{align*}
Substituting this bound back into \eqref{eq:G_well-definedness} yields:
\begin{align}\label{eq:G_HS_bound_final}
    \|G(t,\omega, u)\|_{\mathrm{HS}(U_0, \dot{\mathbb{H}}^{-\beta})}^2 \le \boldsymbol{1}_{(0,T]}(t) \|Q\|_{\mathcal{L}(\mathbb{H})} \|g(t,\omega,\cdot,u(\cdot))\|_{\mathbb{H}}^2 \sum_{m=1}^\infty \eta_m^{-\beta} \|e_m\|_{L^\infty(\mathcal{O};\mathbb{R}^N)}^2.
\end{align}
Since $Q \colon \mathbb{H} \to \mathbb{H}$ is a bounded, positive semidefinite, self-adjoint operator, we have $\|Q\|_{\mathcal{L}(\mathbb{H})} < \infty$. Applying the classical eigenfunction estimate $\|e_m\|_{L^\infty(\mathcal{O};\mathbb{R}^N)} \le C \eta_m^{(d-1)/4}$ for the Dirichlet Laplacian (see, e.g., Grieser~\cite{MR1924468}), along with Weyl's law ($C_1 m^{2/d} \le \eta_m \le C_2 m^{2/d}$), we obtain:
\begin{align}\label{eq:series_eig_sys_Dirichlet_Lap}
    \sum_{m=1}^\infty \eta_m^{-\beta} \|e_m\|_{L^\infty(\mathcal{O};\mathbb{R}^N)}^2 
    &\le C \sum_{m=1}^\infty \eta_m^{-\beta + \frac{d-1}{2}} 
    \le C \sum_{m=1}^\infty m^{\frac{2}{d}\left(-\beta + \frac{d-1}{2}\right)} < \infty.
\end{align}
Since the condition $\nu \ge \frac{\theta d}{2(\theta+1)}$ ensures that the Nemytskii operator maps into $\mathbb{H}$, we have $\|g(t,\omega,\cdot,u(\cdot))\|_{\mathbb{H}}^2 < \infty$. By applying \eqref{eq:series_eig_sys_Dirichlet_Lap} to \eqref{eq:G_well-definedness}, we deduce that $\|G(t,\omega, u)\|_{\mathrm{HS}(U_0, \dot{\mathbb{H}}^{-\beta})} < \infty$ for all $u \in \dot{\mathbb{H}}^\nu$.

Consequently, the mapping $G$ is well-defined and satisfies the measurability condition. Furthermore, following a similar procedure, we obtain for $u,v \in \dot{\mathbb{H}}^\nu$ and $t>0$:
\begin{align}
    \label{eq:G_lip_main} \nonumber\|G(t,\omega, u) - G(t,\omega, v)\|_{\mathrm{HS}(U_0, \dot{\mathbb{H}}^{-\beta})}^2  &\le C \|g(t,\omega,\cdot,u(\cdot)) - g(t,\omega,\cdot,v(\cdot))\|_{\mathbb{H}}^2\\
    \nonumber &\le C t^{-2\sigma}\big(1+\|u\|_{L^{2(\theta+1)}(\mathcal{O};\mathbb{R}^N)}^{2\theta} + \|v\|_{L^{2(\theta+1)}(\mathcal{O};\mathbb{R}^N)}^{2\theta}\big)\|u-v\|_{L^{2(\theta+1)}(\mathcal{O};\mathbb{R}^N)}^2\\
     &\le C t^{-2\sigma}\big(1+\|u\|_{\dot{\mathbb{H}}^\nu}^{2\theta} + \|v\|_{\dot{\mathbb{H}}^\nu}^{2\theta}\big)\|u-v\|_{\dot{\mathbb{H}}^\nu}^2.
\end{align}
Estimate \eqref{eq:G_lip_main} demonstrates that $G$ satisfies the global Lipschitz condition of Assumption~\ref{ass:global_lips_nonlin} for $\theta=0$ and $\hat{\beta}=\sigma$, and the local Lipschitz condition of Assumption~\ref{ass:local_lip_stoch} for $\theta \ge 0$ and $\hat{\beta}=\sigma+(\theta+1)\lambda$. 

\noindent\textbf{\textit{Case 2:}} (The eigensystem $\{q_n, h_n\}_{n \in \mathbb{N}}$ of $Q$ satisfies \eqref{eq:smooth_noise}). 
In this case, since each $\phi \in U_0$ belongs to $L^{\infty}(\mathcal{O}; \mathbb{R}^N)$, the multiplication operator $\mathcal{M}_h$ is well-defined for all $\beta \ge 0$. Measurability follows as before. For any $u \in \dot{\mathbb{H}}^\nu$, we have:
\begin{align}\label{eq:G_well-definedness_case_2}
    \nonumber \|G(t,\omega, u)\|_{\mathrm{HS}(U_0, \dot{\mathbb{H}}^{-\beta})}^2 &= \sum_{n=1}^\infty \| G(t,\omega, u)Q^{1/2}h_n\|_{\dot{\mathbb{H}}^{-\beta}}^2 = \sum_{n=1}^\infty\|\mathcal{M}_{\mathcal{N}_g(t,\omega,u)}Q^{1/2}h_n\|_{\dot{\mathbb{H}}^{-\beta}}^2\\
    \nonumber &= \sum_{n=1}^\infty\|\mathcal{N}_g(t,\omega,u)Q^{1/2}h_n\|_{\dot{\mathbb{H}}^{-\beta}}^2\\
    \nonumber &\le C \boldsymbol{1}_{(0,T]}(t)\sum_{n=1}^\infty\|g(t,\omega,\cdot,u(\cdot))Q^{1/2}h_n\|_{\mathbb{H}}^2 \\
    &\le C \boldsymbol{1}_{(0,T]}(t)\|g(t,\omega,\cdot,u(\cdot))\|_{\mathbb{H}}^2\sum_{n=1}^\infty q_n\|h_n\|_{L^\infty(\mathcal{O};\mathbb{R}^N)}^2 < \infty. 
\end{align}
Thus, $G$ is well-defined and measurable. Proceeding similarly to the previous case, we find for $u,v \in \dot{\mathbb{H}}^\nu$ and $t>0$:
\begin{align}
    \label{eq:G_lip_main_case_2} \nonumber\|G(t,\omega, u) - G(t,\omega, v)\|_{\mathrm{HS}(U_0, \dot{\mathbb{H}}^{-\beta})}^2  &\le C \|g(t,\omega,\cdot,u(\cdot)) - g(t,\omega,\cdot,v(\cdot))\|_{\mathbb{H}}^2\\
    \nonumber &\le C t^{-2\sigma}\big(1+\|u\|_{L^{2(\theta+1)}(\mathcal{O};\mathbb{R}^N)}^{2\theta} + \|v\|_{L^{2(\theta+1)}(\mathcal{O};\mathbb{R}^N)}^{2\theta}\big)\|u-v\|_{L^{2(\theta+1)}(\mathcal{O};\mathbb{R}^N)}^2\\
     &\le C t^{-2\sigma}\big(1+\|u\|_{\dot{\mathbb{H}}^\nu}^{2\theta} + \|v\|_{\dot{\mathbb{H}}^\nu}^{2\theta}\big)\|u-v\|_{\dot{\mathbb{H}}^\nu}^2.
\end{align}
Estimate \eqref{eq:G_lip_main_case_2} confirms that $G$ fulfills both the global and local Lipschitz conditions under the exact parameters stated in Case~1. This completes the proof of the lemma.
\end{proof}

\subsection{The parabolic Anderson model}
The parabolic Anderson model (PAM) classically characterizes the transport of a substance through a random medium or the evolution of a population within a random environment (see \cite{MR1185878}). Utilizing the framework established by Walsh \cite{MR876085}, Chen and Dalang \cite{MR3255231, MR3433576} investigated the existence, uniqueness, and regularity results for this model on the entire real line subject to rough initial data. More recently, employing a semigroup-theoretic approach, Andersson et~al.~\cite[Eq.~(9)]{MR4172830} investigated the well-posedness and regularity results for a continuous version of the PAM on a bounded interval with singular initial conditions, such as the derivative of the Dirac measure at the origin. We consider the following continuous version of the PAM on a bounded domain:
\begin{equation} \label{eq:PAM} 
\begin{aligned}
    \frac{\partial X}{\partial t}(t,\boldsymbol{x},\omega) &= \Delta X(t,\boldsymbol{x},\omega) + X(t,\boldsymbol{x},\omega)\frac{\partial W}{\partial t}(t,\boldsymbol{x},\omega), \; (t,\boldsymbol{x},\omega) \in (0,T] \times \mathcal{O}\times\Omega, \\
    X(t,\boldsymbol{x},\omega) &= 0, \;(t,\boldsymbol{x},\omega) \in (0,T] \times \partial\mathcal{O}\times\Omega, \\
    X(0,\boldsymbol{x},\omega) &= X_0(\boldsymbol{x},\omega), \; (\boldsymbol{x},\omega) \in \mathcal{O}\times\Omega,
\end{aligned}
\end{equation}
 where $W$ is a $Q$-Wiener process defined on the spatial Hilbert space $\mathbb{H}=L^2(\mathcal{O})$, and the initial condition $X_0 \in L^p(\Omega, \mathcal{F}_0; \dot{\mathbb{H}}^{\mu}),\;p\geq 2$. 
 
 We define the linear operator $ A \coloneqq -\Delta +I $ with the domain $\mathcal{D}(A) = H^2(\mathcal{O}) \cap H_0^1(\mathcal{O})$, which satisfies Assumption~\ref{ass:operator_A} and generates the fractional Sobolev spaces $\dot{\mathbb{H}}^\gamma$. The associated drift operator $F \colon [0,T] \times \Omega \times \dot{\mathbb{H}}^\mu \to \dot{\mathbb{H}}^{-\alpha}$ is given by $F(t, \omega, u) = u$, which satisfies the measurability condition in Assumption~\ref{ass:measurability_drift_diff}, and the boundedness and the global Lipschitz conditions in Assumption~\ref{ass:global_lips_nonlin} with parameters \(\nu=0\), \(-\mu\leq \alpha <2\), and $\hat{\alpha} = 0$,  and constants $L_1=1$ and $L_2 = 0$. Furthermore,  since the diffusion term $g(y) = y$ satisfies the hypothesis of Lemma~\ref{lem:cond_on_diffusion} with \(\theta=0\) and \(\sigma=0\), the mapping $G \colon [0,T] \times \Omega \times \dot{\mathbb{H}}^\mu \to \mathrm{HS}(U_0, \dot{\mathbb{H}}^{-\beta})$, with $U_0 = Q^{1/2}(\mathbb{H})$ defined in \eqref{eq:nemytskii_G}, satisfies the measurability condition in Assumption~\ref{ass:measurability_drift_diff}, and the global Lipschitz and boundedness conditions in Assumption~\ref{ass:global_lips_nonlin} with parameters \(\nu=0\), \(0\leq \beta <1\), and $\hat{\beta} = 0$,  and constants $L_3=1$ and $L_4 = 0$. Hence, by Theorems~\ref{thm:main_global_existence} and \ref{thm:pathwise_regularity}, well-posedness and pathwise regularity are guaranteed under the following parameter constraints:
\[
\begin{aligned}
\text{(i)}\;& -1 < \mu \le 0, \qquad
\text{(ii)}\; \nu = 0, \qquad
\text{(iii)}\; -\mu \le \alpha <2, \qquad
\text{(iv)}\; 0\leq \beta <1 \text{ when the eigensystem } \{q_n, h_n\}_{n\in\mathbb{N}} \\[4pt] &\text{ of } Q \text{ satisfies \eqref{eq:smooth_noise}, otherwise }  \max\left\{\frac{d}{2}, \frac{2d-1}{2}\right\} <\beta<1.
\end{aligned}
\]

It is worth noting that our framework accommodates the parabolic Anderson model driven by space-time white noise ($Q = \mathrm{Id}_{\mathbb{H}}$) subject to rough initial data, such as the Dirac measure, in dimension $d=1$.

\subsection{The stochastic Allen--Cahn equation}\label{Allen--Cahn equation}
The stochastic Allen--Cahn equation is a classical reaction-diffusion model describing phase separation under random perturbations. Due to its bistable polynomial nonlinearity, it serves as a standard benchmark for the theoretical and numerical analysis of SPDEs with locally Lipschitz coefficients, especially since standard explicit schemes are known to diverge in this setting (see Hutzenthaler et al.~\cite{MR2985171}). For comprehensive treatments of the underlying theory and specialized numerical methods that accommodate this polynomial growth, we refer the reader to Lord et al.~\cite{MR3308418}, Wang~\cite{MR4140034}, and Kov\'{a}cs et al.~\cite{MR3372078}.

We consider the following stochastic Allen--Cahn equation:
\begin{equation} \label{eq:SACE} 
\begin{aligned}
    \frac{\partial X}{\partial t}(t,\boldsymbol{x},\omega) =& \Delta X(t,\boldsymbol{x},\omega) + X(t,\boldsymbol{x},\omega) - X^3(t,\boldsymbol{x},\omega)\\
    &+ g(t,\omega,\boldsymbol{x},X(t,\boldsymbol{x},\omega))\frac{\partial W}{\partial t}(t,\boldsymbol{x},\omega), \; (t,\boldsymbol{x},\omega) \in (0,T] \times \mathcal{O}\times\Omega, \\
    X(t,\boldsymbol{x},\omega) =& 0, \;(t,\boldsymbol{x},\omega) \in (0,T] \times \partial\mathcal{O}\times\Omega, \\
    X(0,\boldsymbol{x},\omega) =& X_0(\boldsymbol{x},\omega), \; (\boldsymbol{x},\omega) \in \mathcal{O}\times\Omega,
\end{aligned}
\end{equation}
where $\mathcal{O} \subset \mathbb{R}^d$, $W$ is a $Q$-Wiener process defined on $\mathbb{H}=L^2(\mathcal{O})$, $X_0 \colon \Omega \to \dot{\mathbb{H}}^\mu$ is an $\mathcal{F}_0$-measurable random variable, and the diffusion coefficient $g \colon [0,T] \times \Omega \times \mathcal{O} \times \mathbb{R} \to \mathbb{R}$ satisfies the hypothesis of Lemma~\ref{lem:cond_on_diffusion} with $\theta=0$ and $\sigma=0$. 

The operator $A \coloneqq -\Delta + I$ with the domain $\mathcal{D}(A) = H^2(\mathcal{O}) \cap H_0^1(\mathcal{O})$ satisfies Assumption~\ref{ass:operator_A}, and generates a family of fractional order spaces $\dot{\mathbb{H}}^\gamma \coloneqq \mathcal{D}(A^{\gamma/2})$ for $\gamma \in \mathbb{R}$. Let 
\begin{align}
    & \frac{d}{6}<\nu,\;0\leq \alpha,\;\text{ and }\;\alpha+3\nu \geq d.
\end{align}
Then there exists an \(r\in(1,2]\) such that
\begin{align}
     & \frac{1}{3r}\geq \frac{1}{2} - \frac{\nu}{d}\;\text{ and }\;\frac{1}{r}\leq \frac{1}{2} + \frac{\alpha}{d}.
\end{align}
Consequently, by Lemma~\ref{lem:sobolev_embedding}, 
\begin{align}
    \nonumber & \dot{\mathbb{H}}^{\nu} \hookrightarrow L^{3r}(\mathcal{O})\;\text{ and }\;L^{r}(\mathcal{O})\hookrightarrow \dot{\mathbb{H}}^{-\alpha}. 
\end{align}
Let \(f(y)\coloneqq 2y - y^3\). Then, for \(\mu\leq \nu\), the map \(F:[0,T]\times\Omega\times \dot{\mathbb{H}}^{\mu}\to \dot{\mathbb{H}}^{-\alpha} \) such that 

\begin{equation}\label{eq:nemytskii_allen_cahn}
     F(t,\omega,u) \coloneqq \begin{cases}
        j_r\mathcal{N}_f(u) &: u\in \dot{\mathbb{H}}^{\nu},\\
        0 &: u\in \dot{\mathbb{H}}^{\mu}\backslash \dot{\mathbb{H}}^{\nu},
    \end{cases} \quad \forall (t,\omega, u )\in [0,T]\times\Omega\times \dot{\mathbb{H}}^{\mu},
\end{equation}
is a well-defined measurable map from the measurable space $(\Omega_T \times \dot{\mathbb{H}}^{\mu}, \mathcal{P}_T \otimes \mathcal{B}(\dot{\mathbb{H}}^{\mu}))$ to $(\dot{\mathbb{H}}^{-\alpha}, \mathcal{B}(\dot{\mathbb{H}}^{-\alpha}))$, where \(j_r:L^r(\mathcal{O})\hookrightarrow \dot{\mathbb{H}}^{-\alpha}\) is the canonical continuous embedding and \(\mathcal{N}_f:\dot{\mathbb{H}}^{\nu} \to L^r(\mathcal{O})\) such that \((\mathcal{N}_f(u))(\mathbf{x}) = f(u(\mathbf{x}))\) denotes the Nemytskii operator. Note that, for all \((t,\omega)\in(0,T]\times\Omega\), \(\|F(t,\omega,0) \|_{\dot{\mathbb{H}}^{-\alpha}} = 0\). Moreover, 
for any given \(R>0\), \((t,\omega)\in(0,T]\times\Omega\), and \(u,v\in\{\phi\in  \dot{\mathbb{H}}^{\nu} :\,\|\phi\|_{\lambda,t}\leq R,\;\lambda\coloneqq (\nu-\mu)/2\}\),
\begin{align}
    \nonumber \|F(t,\omega,u) - F(t,\omega,v)\|_{\dot{\mathbb{H}}^{-\alpha}}& \leq \|j_r\|_{\mathcal{L}(L^r(\mathcal{O}); \dot{\mathbb{H}}^{-\alpha})}\|\mathcal{N}_f(u) - \mathcal{N}_f(v)\|_{L^r(\mathcal{O})}\\
    \nonumber& = \|j_r\|_{\mathcal{L}(L^r(\mathcal{O}); \dot{\mathbb{H}}^{-\alpha})}\|2(u-v) - (u^3 - v^3)\|_{L^r(\mathcal{O})}\\
    \nonumber& \leq \|j_r\|_{\mathcal{L}(L^r(\mathcal{O}); \dot{\mathbb{H}}^{-\alpha})}\Big(2\|u-v\|_{L^r(\mathcal{O})} +\| u^2 + uv +v^2\|_{L^{3r/2}(\mathcal{O})} \|u-v\|_{L^{3r}(\mathcal{O})}\Big)\\
\nonumber& \leq C\Big(\|u-v\|_{L^{3r}(\mathcal{O})} +\big(\| u\|_{L^{3r}(\mathcal{O})}^2 +\|v\|_{L^{3r}(\mathcal{O})}^2\big) \|u-v\|_{L^{3r}(\mathcal{O})}\Big)\\
\nonumber& \leq C \Big(\|u-v\|_{\dot{\mathbb{H}}^\nu} + \big(\| u\|_{\dot{\mathbb{H}}^\nu}^2 +\|v\|_{\dot{\mathbb{H}}^\nu}^2\big) \|u-v\|_{\dot{\mathbb{H}}^\nu}\Big)\\
&\leq L_1(R) \,t^{-\hat{\alpha}} \|u-v\|_{\lambda,t},
\end{align}
where \(L_1(R) \coloneqq C(T^{2\lambda} + 2R^2)\) and \(\hat{\alpha}\coloneqq 3\lambda = \frac{3(\nu-\mu)}{2}\). 

Furthermore, since the diffusion coefficient $g \colon [0,T] \times \Omega \times \mathcal{O} \times \mathbb{R} \to \mathbb{R}$ satisfies the hypothesis of Lemma~\ref{lem:cond_on_diffusion} with $\theta=0$ and $\sigma=0$, the corresponding mapping $G \colon [0,T] \times \Omega \times \dot{\mathbb{H}}^\mu \to \mathrm{HS}(U_0, \dot{\mathbb{H}}^{-\beta})$ defined in \eqref{eq:nemytskii_G} satisfies the measurability condition in Assumption~\ref{ass:measurability_drift_diff} and the local Lipschitz condition of Assumption~\ref{ass:local_lip_stoch} with $\hat{\beta} = \lambda = \frac{\nu-\mu}{2}$. Thus, under the following parameter constraints:
\[
\begin{aligned}
    &\text{(i)}~ \nu \ge \mu, \quad &&\text{(ii)}~ \nu > \frac{d}{6}, \quad &&\text{(iii)}~ \alpha \geq 0, \quad &&\text{(iv)}~ 3\nu + \alpha \geq d, \\
    &\text{(v)}~ \nu - \mu < \frac{1}{2}, \quad &&\text{(vi)}~ 4\nu - 3\mu + \alpha < 2, \quad &&\text{(vii)}~ \max\left(\frac{d}{2},\frac{2d-1}{2} \right)<\beta < 1 +\mu- 2\nu,
\end{aligned}
\]
an appeal to Theorem~\ref{thm:local_existence} guarantees the existence of a unique maximal local mild solution of the stochastic Allen--Cahn equation \eqref{eq:SACE}. Moreover, if the eigensystem $\{q_n, h_n\}_{n\in\mathbb{N}}$ of $Q$ satisfies \eqref{eq:smooth_noise}, then constraint (vii) $\max(d/2,(2d-1)/2) < \beta < 1+\mu-2\nu$ can be relaxed to $0 \leq \beta < 1+\mu-2\nu$.

Particularly, in dimension $d=1$, by fixing $\nu = 1/6 + \epsilon$ for $0 < \epsilon < 1/12$, one can choose the following set of parameter values:

\vspace{0.5em}
\noindent\textit{For smooth noise} \textnormal{(i.e., the eigensystem $\{q_n, h_n\}_{n \in \mathbb{N}}$ of $Q$ satisfies \eqref{eq:smooth_noise}):}
\[
\begin{aligned}
    &\text{(a)}~ -\frac{5}{18} + \frac{\epsilon}{3} < \mu \le \frac{1}{6}+\epsilon, \quad &&\text{(b)}~ \nu = \frac{1}{6}+\epsilon, \\
    &\text{(c)}~ \frac{1}{2}-3\epsilon \leq \alpha <\frac{4}{3}-4\epsilon + 3\mu, \quad &&\text{(d)}~ 0 \le \beta < \frac{2}{3}-2\epsilon + \mu.
\end{aligned}
\]

\vspace{0.5em}
\noindent\textit{For space-time white noise} \textnormal{(i.e., $Q = \mathrm{Id}_{\mathbb{H}}$):}
\[
\begin{aligned}
    &\text{(a)}~ -\frac{1}{6} + 2\epsilon < \mu \le \frac{1}{6}+\epsilon, \quad &&\text{(b)}~ \nu = \frac{1}{6}+\epsilon, \\
    &\text{(c)}~ \frac{1}{2}-3\epsilon \leq \alpha <\frac{4}{3}-4\epsilon + 3\mu, \quad &&\text{(d)}~ \frac{1}{2} < \beta < \frac{2}{3}-2\epsilon + \mu.
\end{aligned}
\]

\begin{remark}
    We note that the parameter constraints permit $\mu < 0$ in dimension $d=1$, meaning the abstract framework successfully accommodates rough initial data. Taking the limit as $\epsilon \to 0$ in condition (a) yields $\mu > -5/18$ under the smooth noise (where the eigensystem of $Q$ satisfies \eqref{eq:smooth_noise}), and $\mu > -1/6$ when the equation is driven by space-time white noise ($Q = \mathrm{Id}_{\mathbb{H}}$).
\end{remark}

\begin{remark}[The stochastic Chafee--Infante equation]
    The analysis presented above extends immediately to the stochastic Chafee--Infante equation, another widely studied reaction-diffusion model given by
    \begin{equation*}
        \frac{\partial X}{\partial t}(t,\boldsymbol{x},\omega) = \Delta X(t,\boldsymbol{x},\omega) + \kappa X(t,\boldsymbol{x},\omega) - \gamma X^3(t,\boldsymbol{x},\omega)  + g(t,\omega,\boldsymbol{x},X(t,\boldsymbol{x},\omega))\frac{\partial W}{\partial t}(t,\boldsymbol{x},\omega),
    \end{equation*}
    where $\kappa, \gamma > 0$. The linear operator is defined as $A = -\Delta + I$, the associated drift operator $F$ is defined via the Nemytskii operator corresponding to the polynomial $f(y) \coloneqq (\kappa+1)y - \gamma y^3$. Due to the identical cubic polynomial growth, the fractional embedding requirements, the resulting Nemytskii operator bounds, and the time singularity exponent $\hat{\alpha} = 3(\nu-\mu)/2$ remain unchanged. Furthermore, assuming the diffusion coefficient $g$ satisfies the hypothesis of Lemma~\ref{lem:cond_on_diffusion} with $\theta=0$ and $\sigma=0$, the corresponding mapping $G$ satisfies the necessary measurability and local Lipschitz conditions with the identical time singularity exponent $\hat{\beta} = (\nu-\mu)/2$. Consequently, the stochastic Chafee--Infante equation admits a unique maximal local mild solution under the exact same parameter constraints (i)--(vii) established for the stochastic Allen--Cahn equation.
\end{remark}

\subsection{The stochastic Fisher--KPP equation}\label{Fisher--KPP equation}
We consider the stochastic Fisher--KPP (Kolmogorov--Petrovsky--Piskunov) equation:
\begin{equation} \label{eq:SFKPPE} 
\begin{aligned}
    \frac{\partial X}{\partial t}(t,\boldsymbol{x},\omega) =& \Delta X(t,\boldsymbol{x},\omega) + X(t,\boldsymbol{x},\omega) - X^2(t,\boldsymbol{x},\omega)\\
    &+ g(t,\omega,\boldsymbol{x},X(t,\boldsymbol{x},\omega))\frac{\partial W}{\partial t}(t,\boldsymbol{x},\omega), \; (t,\boldsymbol{x},\omega) \in (0,T] \times \mathcal{O}\times\Omega, \\
    X(t,\boldsymbol{x},\omega) =& 0, \;(t,\boldsymbol{x},\omega) \in (0,T] \times \partial\mathcal{O}\times\Omega, \\
    X(0,\boldsymbol{x},\omega) =& X_0(\boldsymbol{x},\omega), \; (\boldsymbol{x},\omega) \in \mathcal{O}\times\Omega,
\end{aligned}
\end{equation}
 where $\mathcal{O} \subset \mathbb{R}^d$, $W$ is a $Q$-Wiener process defined on $\mathbb{H}=L^2(\mathcal{O})$, $X_0 \colon \Omega \to \dot{\mathbb{H}}^\mu$ is an $\mathcal{F}_0$-measurable random variable, and the diffusion coefficient $g \colon [0,T] \times \Omega \times \mathcal{O} \times \mathbb{R} \to \mathbb{R}$ satisfies the hypothesis of Lemma~\ref{lem:cond_on_diffusion} with $\theta=0$ and $\sigma=0$. 
 
 The linear operator $A \coloneqq -\Delta + I$ satisfies the identical structural and measurability assumptions detailed in the previous Subsection~\ref{Allen--Cahn equation}. For $\nu > 0$, $\alpha \geq 0$, and $2\nu + \alpha \geq d/2$, Lemma~\ref{lem:sobolev_embedding} guarantees the existence of an $r\in(1,2]$ such that $\dot{\mathbb{H}}^{\nu} \hookrightarrow L^{2r}(\mathcal{O}; \mathbb{R})$ and $L^{r}(\mathcal{O}; \mathbb{R})\hookrightarrow \dot{\mathbb{H}}^{-\alpha}$. Defining the drift operator $F \colon [0,T] \times \Omega \times \dot{\mathbb{H}}^\mu \to \dot{\mathbb{H}}^{-\alpha}$ via \eqref{eq:nemytskii_allen_cahn} with $f(y)\coloneqq 2y - y^2$, we verify local Lipschitz continuity. Utilizing the factorization $u^2 - v^2 = (u-v)(u+v)$ and Hölder's inequality in $L^r(\mathcal{O})$, we obtain:
\begin{align}\label{eq:fisher_estimate_F}
    \nonumber \|F(t,\omega,u) - F(t,\omega,v)\|_{\dot{\mathbb{H}}^{-\alpha}} 
    &\leq \|j_r\|_{\mathcal{L}(L^r(\mathcal{O}); \dot{\mathbb{H}}^{-\alpha})} \Big(2\|u-v\|_{L^r(\mathcal{O})} + \|u+v\|_{L^{2r}(\mathcal{O})} \|u-v\|_{L^{2r}(\mathcal{O})}\Big) \\
    \nonumber &\leq C \Big(\|u-v\|_{\dot{\mathbb{H}}^\nu} + \big(\|u\|_{\dot{\mathbb{H}}^\nu} + \|v\|_{\dot{\mathbb{H}}^\nu}\big)\|u-v\|_{\dot{\mathbb{H}}^\nu}\Big) \\
    &\leq L_1(R) \,t^{-\hat{\alpha}} \|u-v\|_{\lambda,t},
\end{align}
for all $u,v\in\{\phi\in \dot{\mathbb{H}}^{\nu} :\,\|\phi\|_{\lambda,t}\leq R\}$, where $\lambda \coloneqq (\nu-\mu)/2$, the singularity exponent is $\hat{\alpha}\coloneqq 2\lambda = \nu-\mu$, and $L_1(R) \coloneqq C(T^{\lambda} + 2R)$.

Furthermore, since the diffusion coefficient $g \colon [0,T] \times \Omega \times \mathcal{O} \times \mathbb{R} \to \mathbb{R}$ satisfies the hypothesis of Lemma~\ref{lem:cond_on_diffusion} with $\theta=0$ and $\sigma=0$, the corresponding mapping $G \colon [0,T] \times \Omega \times \dot{\mathbb{H}}^\mu \to \mathrm{HS}(U_0, \dot{\mathbb{H}}^{-\beta})$ defined in \eqref{eq:nemytskii_G} satisfies the measurability condition in Assumption~\ref{ass:measurability_drift_diff} and the local Lipschitz condition of Assumption~\ref{ass:local_lip_stoch} with $\hat{\beta} = \lambda = \frac{\nu-\mu}{2}$.

Consequently, applying Theorem~\ref{thm:local_existence}, the stochastic Fisher--KPP equation admits a unique maximal local mild solution under the following parameter constraints:
$$
\begin{aligned}
    &\text{(i)}~ \nu \ge \mu, \quad &&\text{(ii)}~ \nu > 0, \quad &&\text{(iii)}~ \alpha \geq 0, \quad &&\text{(iv)}~ 2\nu + \alpha \geq \frac{d}{2}, \\
    &\text{(v)}~ \nu - \mu < \frac{1}{2}, \quad &&\text{(vi)}~ 3\nu - 2\mu + \alpha < 2, \quad &&\text{(vii)}~ \max\left(\frac{d}{2},\frac{2d-1}{2} \right)<\beta < 1 +\mu- 2\nu.
\end{aligned}
$$
If the eigensystem of $Q$ satisfies \eqref{eq:smooth_noise}, constraint (vii) relaxes to $0 \leq \beta < 1+\mu-2\nu$.

Particularly, in dimension $d=1$, by fixing $\nu = \epsilon$ for $0 < \epsilon < 1/4$, one can choose the following set of parameter values:

\vspace{0.5em}
\noindent\textit{For smooth noise} \textnormal{(i.e., the eigensystem $\{q_n, h_n\}_{n \in \mathbb{N}}$ of $Q$ satisfies \eqref{eq:smooth_noise}):}
\[
\begin{aligned}
    &\text{(a)}~ -\frac{1}{2} + \epsilon < \mu \le \epsilon, \quad &&\text{(b)}~ \nu = \epsilon, \\
    &\text{(c)}~ \frac{1}{2}-2\epsilon \leq \alpha < 2-3\epsilon + 2\mu, \quad &&\text{(d)}~ 0 \le \beta < 1-2\epsilon + \mu.
\end{aligned}
\]

\vspace{0.5em}
\noindent\textit{For space-time white noise} \textnormal{(i.e., $Q = \mathrm{Id}_{\mathbb{H}}$):}
\[
\begin{aligned}
    &\text{(a)}~ -\frac{1}{2} + 2\epsilon < \mu \le \epsilon, \quad &&\text{(b)}~ \nu = \epsilon, \\
    &\text{(c)}~ \frac{1}{2}-2\epsilon \leq \alpha < 2-3\epsilon + 2\mu, \quad &&\text{(d)}~ \frac{1}{2} < \beta < 1-2\epsilon + \mu.
\end{aligned}
\]

\begin{remark}
    We note that due to the quadratic nonlinearity of the drift term, the framework successfully accommodates rough initial data ($\mu < 0$) in dimension $d=1$. Taking the limit as $\epsilon \to 0$ in condition (a) yields $\mu > -1/2$ under both smooth noise (where the eigensystem of $Q$ satisfies \eqref{eq:smooth_noise}) and space-time white noise ($Q = \mathrm{Id}_{\mathbb{H}}$).
\end{remark}

\subsection{The stochastic Burgers equation}\label{subsec:SBE}
The deterministic Burgers equation was introduced by Burgers~\cite{MR27195} as a simplified model for fluid turbulence and shock wave formation. Its stochastic counterpart has since been extensively studied in one spatial dimension. Existence and uniqueness for $d=1$ have been established through various approaches, including semigroup methods on bounded domains by Gy\"{o}ngy~\cite{MR1608641}, Da Prato et al.~\cite{MR1300149}, Da Prato and Gatarek~\cite{MR1380259}, and Zaidi and Nualart~\cite{MR1692868}; analysis on the whole real line by Gy\"{o}ngy and Nualart~\cite{MR1698967}; and the Cole--Hopf transformation by Bertini et al.~\cite{MR1301846}. For higher spatial dimensions ($d \ge 2$), Brze\'{z}niak et al.~\cite{MR3166959} established the existence of global strong solutions in $L^p$-spaces ($p>d$). 

We consider the following divergence-form stochastic Burgers equation:
\begin{equation} \label{eq:SBE} 
\begin{aligned}
    \frac{\partial X}{\partial t}(t,\boldsymbol{x},\omega) =& \Delta X(t,\boldsymbol{x},\omega) - \nabla \cdot \big(X(t,\boldsymbol{x},\omega) \otimes X(t,\boldsymbol{x},\omega)\big)\\
    &+ g(t,\omega,\boldsymbol{x},X(t,\boldsymbol{x},\omega))\frac{\partial W}{\partial t}(t,\boldsymbol{x},\omega), \; (t,\boldsymbol{x},\omega) \in (0,T] \times \mathcal{O}\times\Omega, \\
    X(t,\boldsymbol{x},\omega) =& 0, \;(t,\boldsymbol{x},\omega) \in (0,T] \times \partial\mathcal{O}\times\Omega, \\
    X(0,\boldsymbol{x},\omega) =& X_0(\boldsymbol{x},\omega), \; (\boldsymbol{x},\omega) \in \mathcal{O}\times\Omega.
\end{aligned}
\end{equation}
To ensure the tensor product $X \otimes X$ is dimensionally well-defined under the divergence operator, the state space must be vector-valued with a codomain matching the spatial dimension $d$. Thus, the diffusion coefficient $g$ acts on vector fields, more precisely, the diffusion coefficient $g \colon [0,T] \times \Omega \times \mathcal{O} \times \mathbb{R}^d \to \mathbb{R}^d$ is assumed to satisfy the hypothesis of Lemma~\ref{lem:cond_on_diffusion} with $\theta=0$ and $\sigma=0$, $W$ is a $Q$-Wiener process defined on the Hilbert space $\mathbb{H} = L^2(\mathcal{O}; \mathbb{R}^d)$, and $X_0 \colon \Omega \to \dot{\mathbb{H}}^\mu$ is an $\mathcal{F}_0$-measurable random variable. 

The operator $A \coloneqq -\Delta+I$ with the domain $\mathcal{D}(A) = H^2(\mathcal{O}; \mathbb{R}^d) \cap H_0^1(\mathcal{O}; \mathbb{R}^d)$ satisfies Assumption~\ref{ass:operator_A}, and generates a family of fractional Sobolev spaces $\dot{\mathbb{H}}^\gamma \coloneqq \mathcal{D}(A^{\gamma/2})$ for $\gamma \in \mathbb{R}$. Let $\nu > 0$, $\alpha \geq 1$, and $2\nu + \alpha \geq 1 + d/2$. Defining the shifted regularity index $\rho \coloneqq \alpha - 1 \ge 0$, there exists an $r \in (1,2]$ such that
\begin{align}
     & \frac{1}{2r}\geq \frac{1}{2} - \frac{\nu}{d}\;\text{ and }\;\frac{1}{r}\leq \frac{1}{2} + \frac{\rho}{d}.
\end{align}
Consequently, by applying Lemma~\ref{lem:sobolev_embedding} component-wise, the following continuous embeddings hold:
\begin{align}
    \nonumber & \dot{\mathbb{H}}^{\nu} \hookrightarrow L^{2r}(\mathcal{O}; \mathbb{R}^d)\; \text{and} \; L^{r}(\mathcal{O}; \mathbb{R}^{d \times d})\hookrightarrow \dot{\mathbb{H}}^{-\rho} = \dot{\mathbb{H}}^{1-\alpha}. 
\end{align}

Let $f(\boldsymbol{y}) \coloneqq \boldsymbol{y} \otimes \boldsymbol{y}$. Then, for $\mu \leq \nu$, the map $F \colon [0,T] \times \Omega \times \dot{\mathbb{H}}^\mu \to \dot{\mathbb{H}}^{-\alpha}$ defined by
\begin{equation}\label{eq:nemytskii_burger}
    F(t,\omega,\boldsymbol{u}) \coloneqq \begin{cases}
        \boldsymbol{u} -D_r\mathcal{N}_f(\boldsymbol{u}) &: \boldsymbol{u}\in \dot{\mathbb{H}}^{\nu},\\
        0 &: \text{ otherwise},
    \end{cases} \quad \forall (t,\omega, \boldsymbol{u} )\in [0,T]\times\Omega\times \dot{\mathbb{H}}^{\mu},
\end{equation}
is a well-defined measurable map from the measurable space $(\Omega_T \times \dot{\mathbb{H}}^{\mu}, \mathcal{P}_T \otimes \mathcal{B}(\dot{\mathbb{H}}^{\mu}))$ to $(\dot{\mathbb{H}}^{-\alpha}, \mathcal{B}(\dot{\mathbb{H}}^{-\alpha}))$. Here, $D_r \coloneqq \nabla \cdot j_r$ is a bounded linear operator from $L^r(\mathcal{O}; \mathbb{R}^{d \times d})$ to $\dot{\mathbb{H}}^{-\alpha}$, where $j_r \colon L^r(\mathcal{O}; \mathbb{R}^{d \times d}) \hookrightarrow \dot{\mathbb{H}}^{1-\alpha}$ is the canonical continuous embedding, and $\nabla \cdot \colon \dot{\mathbb{H}}^{1-\alpha} \to \dot{\mathbb{H}}^{-\alpha}$ is the continuous divergence operator. Furthermore, $\mathcal{N}_f \colon \dot{\mathbb{H}}^{\nu} \to L^r(\mathcal{O}; \mathbb{R}^{d \times d})$ such that $(\mathcal{N}_f(\boldsymbol{u}))(\boldsymbol{x}) = f(\boldsymbol{u}(\boldsymbol{x}))$ denotes the associated Nemytskii operator. Note that, for all $(t,\omega)\in(0,T]\times\Omega$, $\|F(t,\omega,\boldsymbol{0}) \|_{\dot{\mathbb{H}}^{-\alpha}} = 0$.

To estimate the locally Lipschitz difference, we introduce the bilinear continuous map $B \colon \dot{\mathbb{H}}^\nu \times \dot{\mathbb{H}}^\nu \to \dot{\mathbb{H}}^{-\alpha}$ defined via the composition $B(\boldsymbol{w},\boldsymbol{z}) \coloneqq -D_r (\boldsymbol{w} \otimes \boldsymbol{z})$. By applying the Cauchy-Schwarz inequality, we establish the bilinear estimate:
\begin{align}\label{eq:bilinear_estimate}
    \nonumber \|B(\boldsymbol{w},\boldsymbol{z})\|_{\dot{\mathbb{H}}^{-\alpha}} &\le \|D_r\|_{\mathcal{L}(L^r(\mathcal{O}; \mathbb{R}^{d \times d}); \dot{\mathbb{H}}^{-\alpha})} \|\boldsymbol{w} \otimes \boldsymbol{z}\|_{L^r(\mathcal{O}; \mathbb{R}^{d \times d})} \\
    \nonumber &\le \|D_r\|_{\mathcal{L}(L^r(\mathcal{O}; \mathbb{R}^{d \times d}); \dot{\mathbb{H}}^{-\alpha})} \|\boldsymbol{w}\|_{L^{2r}(\mathcal{O}; \mathbb{R}^d)} \|\boldsymbol{z}\|_{L^{2r}(\mathcal{O}; \mathbb{R}^d)} \\
    &\le M \|\boldsymbol{w}\|_{\dot{\mathbb{H}}^\nu} \|\boldsymbol{z}\|_{\dot{\mathbb{H}}^\nu}.
\end{align}
Using \eqref{eq:bilinear_estimate}, we derive the Lipschitz bound for any given $R>0$, $(t,\omega)\in(0,T]\times\Omega$, and $\boldsymbol{u},\boldsymbol{v}\in\{\boldsymbol{\phi}\in \dot{\mathbb{H}}^{\nu} :\,\|\boldsymbol{\phi}\|_{\lambda,t}\leq R,\;\lambda\coloneqq (\nu-\mu)/2\}$:
\begin{align}\label{eq:burgers_estimate_F}
    \nonumber \|F(t,\omega,\boldsymbol{u}) - F(t,\omega,\boldsymbol{v})\|_{\dot{\mathbb{H}}^{-\alpha}} 
    &\leq \|\boldsymbol{u}-\boldsymbol{v}\|_{\dot{\mathbb{H}}^{-\alpha}} + \|B(\boldsymbol{u},\boldsymbol{u}) - B(\boldsymbol{v},\boldsymbol{v})\|_{\dot{\mathbb{H}}^{-\alpha}} \\
    \nonumber &\leq \|\boldsymbol{u}-\boldsymbol{v}\|_{\dot{\mathbb{H}}^{-\alpha}} +  \|B(\boldsymbol{u}, \boldsymbol{u}-\boldsymbol{v}) + B(\boldsymbol{u}-\boldsymbol{v}, \boldsymbol{v})\|_{\dot{\mathbb{H}}^{-\alpha}} \\
    \nonumber &\leq \|\boldsymbol{u}-\boldsymbol{v}\|_{\dot{\mathbb{H}}^{-\alpha}} + \|B(\boldsymbol{u}, \boldsymbol{u}-\boldsymbol{v})\|_{\dot{\mathbb{H}}^{-\alpha}} + \|B(\boldsymbol{u}-\boldsymbol{v}, \boldsymbol{v})\|_{\dot{\mathbb{H}}^{-\alpha}} \\
    \nonumber &\leq \|\boldsymbol{u}-\boldsymbol{v}\|_{\dot{\mathbb{H}}^{-\alpha}} +  M \|\boldsymbol{u}\|_{\dot{\mathbb{H}}^\nu} \|\boldsymbol{u}-\boldsymbol{v}\|_{\dot{\mathbb{H}}^\nu} + M \|\boldsymbol{u}-\boldsymbol{v}\|_{\dot{\mathbb{H}}^\nu} \|\boldsymbol{v}\|_{\dot{\mathbb{H}}^\nu} \\
    &\leq L_1(R) \,t^{-\hat{\alpha}} \|\boldsymbol{u}-\boldsymbol{v}\|_{\lambda,t},
\end{align}
where $L_1(R) \coloneqq C T^{\lambda} + 2MR$ and $\hat{\alpha}\coloneqq 2\lambda = \nu-\mu$. Since $L_2(R) \coloneqq 0$, the drift operator satisfies Assumption~\ref{ass:local_lip_stoch}. 

Furthermore, since the diffusion coefficient $g \colon [0,T] \times \Omega \times \mathcal{O} \times \mathbb{R}^d \to \mathbb{R}^d$ satisfies the hypothesis of Lemma~\ref{lem:cond_on_diffusion} with $\theta=0$ and $\sigma=0$, the corresponding mapping $G \colon [0,T] \times \Omega \times \dot{\mathbb{H}}^\mu \to \mathrm{HS}(U_0, \dot{\mathbb{H}}^{-\beta})$ defined in \eqref{eq:nemytskii_G} satisfies the measurability condition in Assumption~\ref{ass:measurability_drift_diff} and the local Lipschitz condition of Assumption~\ref{ass:local_lip_stoch} with $\hat{\beta} = \lambda = \frac{\nu-\mu}{2}$.

An application of Theorem~\ref{thm:local_existence} guarantees the existence of a unique maximal local mild solution to the divergence-form stochastic Burgers equation \eqref{eq:SBE} under the following parameter constraints:
$$
\begin{aligned}
    &\text{(i)}~ \nu \ge \mu, \quad &&\text{(ii)}~ \nu > 0, \quad &&\text{(iii)}~ \alpha \ge 1, \quad &&\text{(iv)}~ 2\nu + \alpha \geq 1 + \frac{d}{2}, \\
    &\text{(v)}~ \nu - \mu < \frac{1}{2}, \quad &&\text{(vi)}~ 3\nu - 2\mu + \alpha < 2, \quad &&\text{(vii)}~ \max\left(\frac{d}{2},\frac{2d-1}{2} \right) <\beta < 1 +\mu- 2\nu.
\end{aligned}
$$
If the eigensystem of $Q$ satisfies \eqref{eq:smooth_noise}, constraint (vii) relaxes to $0 \leq \beta < 1+\mu-2\nu$. 

Particularly, in dimension $d=1$, by fixing $\nu = \epsilon$ for $0 < \epsilon < 1/6$, one can choose the following set of parameter values:

\vspace{0.5em}
\noindent\textit{For smooth noise} \textnormal{(i.e., the eigensystem $\{q_n, h_n\}_{n \in \mathbb{N}}$ of $Q$ satisfies \eqref{eq:smooth_noise}):}
\[
\begin{aligned}
    &\text{(a)}~ -\frac{1}{4} + \frac{\epsilon}{2} < \mu \le \epsilon, \quad &&\text{(b)}~ \nu = \epsilon, \\
    &\text{(c)}~ \frac{3}{2}-2\epsilon \leq \alpha < 2-3\epsilon + 2\mu, \quad &&\text{(d)}~ 0 \le \beta < 1-2\epsilon + \mu.
\end{aligned}
\]

\vspace{0.5em}
\noindent\textit{For space-time white noise} \textnormal{(i.e., $Q = \mathrm{Id}_{\mathbb{H}}$):}
\[
\begin{aligned}
    &\text{(a)}~ -\frac{1}{4} + \frac{\epsilon}{2} < \mu \le \epsilon, \quad &&\text{(b)}~ \nu = \epsilon, \\
    &\text{(c)}~ \frac{3}{2}-2\epsilon \leq \alpha < 2-3\epsilon + 2\mu, \quad &&\text{(d)}~ \frac{1}{2} < \beta < 1-2\epsilon + \mu.
\end{aligned}
\]

\begin{remark} 
    A notable feature of the derived parameter constraints is that in dimension $d=1$, the framework successfully accommodates rough initial data ($\mu < 0$). Taking the limit as $\epsilon \to 0$ in condition (a) yields $\mu > -1/4$ under both space-time white noise ($Q = \mathrm{Id}_{\mathbb{H}}$) and smooth noise (where the eigensystem of $Q$ satisfies \eqref{eq:smooth_noise}).
\end{remark}

\subsection{The stochastic Burgers--Fisher equation}
We consider the divergence-form stochastic Burgers--Fisher equation:
\begin{equation} \label{eq:SBFE} 
\begin{aligned}
    \frac{\partial X}{\partial t}(t,\boldsymbol{x},\omega) &= \Delta X(t,\boldsymbol{x},\omega) + X(t,\boldsymbol{x},\omega) - X^2(t,\boldsymbol{x},\omega) - \nabla \cdot \big(X(t,\boldsymbol{x},\omega) \otimes X(t,\boldsymbol{x},\omega)\big) \\
    & \quad + g(t,\omega,\boldsymbol{x},X(t,\boldsymbol{x},\omega))\frac{\partial W}{\partial t}(t,\boldsymbol{x},\omega), \; (t,\boldsymbol{x},\omega) \in (0,T] \times \mathcal{O}\times\Omega, \\
    X(t,\boldsymbol{x},\omega) &= 0, \;(t,\boldsymbol{x},\omega) \in (0,T] \times \partial\mathcal{O}\times\Omega, \\
    X(0,\boldsymbol{x},\omega) &= X_0(\boldsymbol{x},\omega), \; (\boldsymbol{x},\omega) \in \mathcal{O}\times\Omega,
\end{aligned}
\end{equation}
where $W$ is a $Q$-Wiener process defined on the Hilbert space $\mathbb{H} = L^2(\mathcal{O}; \mathbb{R}^d)$, $X_0 \colon \Omega \to \dot{\mathbb{H}}^\mu$ is an $\mathcal{F}_0$-measurable random variable, $X^2$ denotes the component-wise square vector field, and the diffusion coefficient $g \colon [0,T] \times \Omega \times \mathcal{O} \times \mathbb{R}^d \to \mathbb{R}^d$ is assumed to satisfy the hypothesis of Lemma~\ref{lem:cond_on_diffusion} with $\theta=0$ and $\sigma=0$. 

The operator $A \coloneqq -\Delta+I$ with the domain $\mathcal{D}(A) = H^2(\mathcal{O}; \mathbb{R}^d) \cap H_0^1(\mathcal{O}; \mathbb{R}^d)$ satisfies Assumption~\ref{ass:operator_A}. Let $f_1(\boldsymbol{y}) \coloneqq \boldsymbol{y} - \boldsymbol{y}^2$ and $f_2(\boldsymbol{y}) \coloneqq \boldsymbol{y} \otimes \boldsymbol{y}$. For $\mu \le \nu$, we construct the drift operator $F \colon [0,T] \times \Omega \times \dot{\mathbb{H}}^\mu \to \dot{\mathbb{H}}^{-\alpha}$ as the sum $F \coloneqq F_F + F_B$, where the reaction and convective operators are respectively defined by
\begin{align}
    F_F(t,\omega,\boldsymbol{u}) &\coloneqq \begin{cases}
        j_r\mathcal{N}_{f_1}(\boldsymbol{u}) &: \boldsymbol{u}\in \dot{\mathbb{H}}^{\nu},\\
        0 &: \text{otherwise},
    \end{cases} \label{eq:drift_fisher_part} \\
    F_B(t,\omega,\boldsymbol{u}) &\coloneqq \begin{cases}
        \boldsymbol{u} -D_r\mathcal{N}_{f_2}(\boldsymbol{u}) &: \boldsymbol{u}\in \dot{\mathbb{H}}^{\nu},\\
        0 &: \text{otherwise},
    \end{cases} \label{eq:drift_burgers_part}
\end{align}
for all $(t,\omega, \boldsymbol{u}) \in [0,T]\times\Omega\times \dot{\mathbb{H}}^{\mu}$. Applying the triangle inequality in $\dot{\mathbb{H}}^{-\alpha}$ and using \eqref{eq:fisher_estimate_F} and \eqref{eq:burgers_estimate_F}, we obtain
\begin{align}
    \nonumber \|F(t,\omega,\boldsymbol{u}) - F(t,\omega,\boldsymbol{v})\|_{\dot{\mathbb{H}}^{-\alpha}} 
    &\le \|F_F(t,\omega,\boldsymbol{u}) - F_F(t,\omega,\boldsymbol{v})\|_{\dot{\mathbb{H}}^{-\alpha}} + \|F_B(t,\omega,\boldsymbol{u}) - F_B(t,\omega,\boldsymbol{v})\|_{\dot{\mathbb{H}}^{-\alpha}} \\
    \nonumber &\le L_1^F(R) \,t^{-\hat{\alpha}} \|\boldsymbol{u}-\boldsymbol{v}\|_{\lambda,t} + L_1^B(R) \,t^{-\hat{\alpha}} \|\boldsymbol{u}-\boldsymbol{v}\|_{\lambda,t} \\
    &= L_1(R) \,t^{-\hat{\alpha}} \|\boldsymbol{u}-\boldsymbol{v}\|_{\lambda,t},
\end{align}
where $L_1(R) \coloneqq L_1^F(R) + L_1^B(R)$, $\hat{\alpha}\coloneqq 2\lambda = \nu-\mu$. Thus, $F$ satisfies Assumption~\ref{ass:local_lip_stoch}. 

Furthermore, since the diffusion coefficient $g \colon [0,T] \times \Omega \times \mathcal{O} \times \mathbb{R}^d \to \mathbb{R}^d$ satisfies the hypothesis of Lemma~\ref{lem:cond_on_diffusion} with $\theta=0$ and $\sigma=0$, the corresponding mapping $G \colon [0,T] \times \Omega \times \dot{\mathbb{H}}^\mu \to \mathrm{HS}(U_0, \dot{\mathbb{H}}^{-\beta})$ defined in \eqref{eq:nemytskii_G} satisfies the measurability condition in Assumption~\ref{ass:measurability_drift_diff} and the local Lipschitz condition of Assumption~\ref{ass:local_lip_stoch} with $\hat{\beta} = \lambda = \frac{\nu-\mu}{2}$.

Note that the reaction term $F_F$ requires $\alpha \ge 0$ and $2\nu + \alpha \ge d/2$, whereas the divergence-form convective term $F_B$ imposes stronger constraints $\alpha \ge 1$ and $2\nu + \alpha \ge 1 + d/2$. Hence, the convective constraints dominate. By Theorem~\ref{thm:local_existence}, the divergence-form stochastic Burgers--Fisher equation \eqref{eq:SBFE} admits a unique local mild solution under the exact same parameters established for the stochastic Burgers equation in Subsection~\ref{subsec:SBE}.

\subsection{The stochastic Ginzburg--Landau system}\label{subsec:Ginzburg_Landau_equation}
The stochastic Ginzburg--Landau system, serving as the vector-valued extension of the scalar Allen--Cahn equation considered in Subsection~\ref{Allen--Cahn equation}, is a fundamental framework for studying coupled phase transitions and infinite-dimensional dynamical systems. The existence and regularity of solutions for the deterministic Ginzburg--Landau system were studied by Ginibre and Velo~\cite{MR1406282} using compactness methods on unbounded domains, an approach subsequently extended to the stochastic setting by Rougemont~\cite{MR1889231}. Further advancing the theory on unbounded domains, Eckmann and Hairer~\cite{MR1808628} established the existence of invariant measures. More recently, Lin and Gao~\cite{MR3908922} proved existence and uniqueness for the generalized Ginzburg--Landau equation driven by jump noise on bounded domains.

We consider the following stochastic Ginzburg--Landau system with a locally Lipschitz diffusion coefficient on bounded domains:
\begin{equation} \label{eq:SGL_system} 
\begin{aligned}
    \frac{\partial \boldsymbol{X}}{\partial t}(t,\boldsymbol{x},\omega) =& \Delta \boldsymbol{X}(t,\boldsymbol{x},\omega) + \boldsymbol{X}(t,\boldsymbol{x},\omega) - |\boldsymbol{X}(t,\boldsymbol{x},\omega)|^2\boldsymbol{X}(t,\boldsymbol{x},\omega)\\
    &+ g(t,\omega,\boldsymbol{x},\boldsymbol{X}(t,\boldsymbol{x},\omega))\frac{\partial W}{\partial t}(t,\boldsymbol{x},\omega), \; (t,\boldsymbol{x},\omega) \in (0,T] \times \mathcal{O}\times\Omega, \\
    \boldsymbol{X}(t,\boldsymbol{x},\omega) =& \boldsymbol{0}, \;(t,\boldsymbol{x},\omega) \in (0,T] \times \partial\mathcal{O}\times\Omega, \\
    \boldsymbol{X}(0,\boldsymbol{x},\omega) =& \boldsymbol{X}_0(\boldsymbol{x},\omega), \; (\boldsymbol{x},\omega) \in \mathcal{O}\times\Omega,
\end{aligned}
\end{equation}
where $\mathcal{O} \subset \mathbb{R}^d$ possesses the $C^2$-extension property, $W$ is a $Q$-Wiener process defined on $\mathbb{H}=L^2(\mathcal{O}; \mathbb{R}^N)$, $\boldsymbol{X}_0 \colon \Omega \to \dot{\mathbb{H}}^\mu$ is an $\mathcal{F}_0$-measurable random variable, and the diffusion coefficient $g \colon [0,T] \times \Omega \times \mathcal{O} \times \mathbb{R}^d \to \mathbb{R}^d$ is assumed to satisfy the hypothesis of Lemma~\ref{lem:cond_on_diffusion} with $\theta=1$ and $\sigma=0$. 

The operator $A \coloneqq -\Delta + I$ with the domain $\mathcal{D}(A) = H^2(\mathcal{O}; \mathbb{R}^N) \cap H_0^1(\mathcal{O}; \mathbb{R}^N)$ satisfies Assumption~\ref{ass:operator_A}, and generates the fractional Sobolev spaces $\dot{\mathbb{H}}^\gamma \coloneqq \mathcal{D}(A^{\gamma/2})$. Let 
\begin{align}
    & \frac{d}{6}<\nu,\;0\leq \alpha,\;\text{ and }\;\alpha+3\nu \geq d.
\end{align}
Then there exists an $r\in(1,2]$ such that
\begin{align}
     & \frac{1}{3r}\geq \frac{1}{2} - \frac{\nu}{d}\;\text{ and }\;\frac{1}{r}\leq \frac{1}{2} + \frac{\alpha}{d}.
\end{align}
Consequently, by Lemma~\ref{lem:sobolev_embedding} applied component-wise, we have the continuous embeddings $\dot{\mathbb{H}}^{\nu} \hookrightarrow L^{3r}(\mathcal{O}; \mathbb{R}^N)$ and $L^{r}(\mathcal{O}; \mathbb{R}^N)\hookrightarrow \dot{\mathbb{H}}^{-\alpha}$. 

Let $f(\boldsymbol{y})\coloneqq \boldsymbol{y} - |\boldsymbol{y}|^2\boldsymbol{y}$. Then, for $\mu\leq \nu$, the drift mapping $F:[0,T]\times\Omega\times \dot{\mathbb{H}}^{\mu}\to \dot{\mathbb{H}}^{-\alpha}$ defined by
\begin{equation}\label{eq:nemytskii_allen_cahn_system}
     F(t,\omega,\boldsymbol{u}) \coloneqq \begin{cases}
        j_r\mathcal{N}_f(\boldsymbol{u}) &: \boldsymbol{u}\in \dot{\mathbb{H}}^{\nu},\\
        0 &: \text{otherwise},
    \end{cases} \quad \forall (t,\omega, \boldsymbol{u} )\in [0,T]\times\Omega\times \dot{\mathbb{H}}^{\mu},
\end{equation}
is a well-defined measurable map. Here, $j_r \colon L^r(\mathcal{O}; \mathbb{R}^N) \hookrightarrow \dot{\mathbb{H}}^{-\alpha}$ is the canonical embedding and $\mathcal{N}_f$ is the associated Nemytskii operator. Note that $\|F(t,\omega,\boldsymbol{0}) \|_{\dot{\mathbb{H}}^{-\alpha}} = 0$. By applying generalized Hölder's inequality and the embeddings, for any given $R>0$, $(t,\omega)\in(0,T]\times\Omega$, and $\boldsymbol{u},\boldsymbol{v}\in\{\boldsymbol{\phi}\in  \dot{\mathbb{H}}^{\nu} :\,\|\boldsymbol{\phi}\|_{\lambda,t}\leq R,\;\lambda\coloneqq (\nu-\mu)/2\}$, we bound the locally Lipschitz difference:
\begin{align}
    \nonumber \|F(t,\omega,\boldsymbol{u}) - F(t,\omega,\boldsymbol{v})\|_{\dot{\mathbb{H}}^{-\alpha}}& \leq \|j_r\| \|\mathcal{N}_f(\boldsymbol{u}) - \mathcal{N}_f(\boldsymbol{v})\|_{L^r(\mathcal{O}; \mathbb{R}^N)}\\
    \nonumber& \leq \|j_r\| \Big(\|\boldsymbol{u}-\boldsymbol{v}\|_{L^r} + C \big\| |\boldsymbol{u}|^2 + |\boldsymbol{v}|^2 \big\|_{L^{3r/2}} \|\boldsymbol{u}-\boldsymbol{v}\|_{L^{3r}}\Big)\\
\nonumber& \leq C \Big(\|\boldsymbol{u}-\boldsymbol{v}\|_{\dot{\mathbb{H}}^\nu} + \big(\| \boldsymbol{u}\|_{\dot{\mathbb{H}}^\nu}^2 +\|\boldsymbol{v}\|_{\dot{\mathbb{H}}^\nu}^2\big) \|\boldsymbol{u}-\boldsymbol{v}\|_{\dot{\mathbb{H}}^\nu}\Big)\\
&\leq L_1(R) \,t^{-\hat{\alpha}} \|\boldsymbol{u}-\boldsymbol{v}\|_{\lambda,t},
\end{align}
where $L_1(R) \coloneqq C(T^{2\lambda} + 2R^2)$ and the drift singularity index is $\hat{\alpha}\coloneqq 3\lambda = \frac{3(\nu-\mu)}{2}$.

Now, since the diffusion coefficient \(g\) satisfies the hypothesis of Lemma~\ref{lem:cond_on_diffusion} for $\theta=1$ and $\sigma=0$, the corresponding diffusion mapping $G$ defined in \eqref{eq:nemytskii_G} satisfies the measurability condition in Assumption~\ref{ass:measurability_drift_diff} and the local Lipschitz condition of Assumption~\ref{ass:local_lip_stoch} with $\hat{\beta} \coloneqq \sigma + (\theta+1)\lambda = 2\lambda = \nu - \mu$. 

According to Lemma~\ref{lem:cond_on_diffusion}, setting $\theta=1$ imposes the condition $\nu \ge \frac{d}{4}$. Because this dominates the drift term constraint ($\nu > \frac{d}{6}$), an appeal to Theorem~\ref{thm:local_existence} guarantees the existence of a unique maximal local mild solution to the system \eqref{eq:SGL_system} under the following constraints:
\[
\begin{aligned}
    &\text{(i)}~ \nu \ge \mu, \quad &&\text{(ii)}~ \nu \ge \frac{d}{4}, \quad &&\text{(iii)}~ \alpha \geq 0, \quad &&\text{(iv)}~ 3\nu + \alpha \geq d, \\
    &\text{(v)}~ \nu - \mu < \frac{1}{3}, \quad &&\text{(vi)}~ 4\nu - 3\mu + \alpha < 2, \quad &&\text{(vii)}~ \max\left(\frac{d}{2},\frac{2d-1}{2} \right) < \beta < 1 - 3\nu + 2\mu.
\end{aligned}
\]
Particularly, in dimension $d=1$, by fixing $\nu = 1/4 + \epsilon$ for $0 < \epsilon < 1/12$, one can choose  the following set of parameter values:

\vspace{0.5em}
\noindent\textit{For smooth noise} \textnormal{(i.e., the eigensystem $\{q_n, h_n\}_{n \in \mathbb{N}}$ of $Q$ satisfies \eqref{eq:smooth_noise}):}
\[
\begin{aligned}
    &\text{(a)}~ -\frac{1}{12} + \epsilon < \mu \le \frac{1}{4}+\epsilon, \quad &&\text{(b)}~ \nu = \frac{1}{4}+\epsilon, \\
    &\text{(c)}~ \frac{1}{4}-3\epsilon \leq \alpha < 1-4\epsilon + 3\mu, \quad &&\text{(d)}~ 0 \le \beta < \frac{1}{4}-3\epsilon + 2\mu.
\end{aligned}
\]

\vspace{0.5em}
\noindent\textit{For space-time white noise} \textnormal{(i.e., $Q = \mathrm{Id}_{\mathbb{H}}$):}
\[
\begin{aligned}
    &\text{(a)}~ \frac{1}{8} + \frac{3\epsilon}{2} < \mu \le \frac{1}{4}+\epsilon, \quad &&\text{(b)}~ \nu = \frac{1}{4}+\epsilon, \\
    &\text{(c)}~ \frac{1}{4}-3\epsilon \leq \alpha < 1-4\epsilon + 3\mu, \quad &&\text{(d)}~ \frac{1}{2} < \beta < \frac{1}{4}-3\epsilon + 2\mu.
\end{aligned}
\]

\begin{remark}
    In dimension $d=1$, taking the limit as $\epsilon \to 0$ in condition (a) yields $\mu > -1/12$. However, when the system is driven by space-time white noise ($Q = \mathrm{Id}_{\mathbb{H}}$), we obtain $\mu > 1/8$. 
\end{remark}


\section*{Acknowledgments}
The authors acknowledge the support provided by the Indian Institute of Technology Goa, India.

\addcontentsline{toc}{section}{References}

\bibliographystyle{abbrv} 
\bibliography{references.bib}

\appendix

\end{document}